\newtheorem{theorem}{Theorem}[section]
\newtheorem{corollary}[theorem]{Corollary}
\newtheorem{definition}[theorem]{Definition}
\newtheorem{example}[theorem]{Example}
\newtheorem{lemma}[theorem]{Lemma}
\newtheorem{proposition}[theorem]{Proposition}
\newtheorem{remark}[theorem]{Remark}
\date{February 07, 2024}
\begin{document}
\title[$\mbox{Irreducible Graded Bimodules over Algebras and a Pierce Decomposition of the Jacobson Radical}$]{Irreducible Graded Bimodules over Algebras and a Pierce Decomposition of the Jacobson Radical}

\author[De Fran\c{c}a]{Antonio de Fran\c{c}a$^\dag$}
\address{Unidade Acad\^emica de Matem\'{a}tica, Universidade Federal de Campina Grande, Av. Apr\'igio Veloso, 785, Bodocong\'o, CEP 58429-970, Campina Grande, Para\'iba, Brasil}
\email{\href{mailto: a.defranca@yandex.com}{a.defranca@yandex.com}}
\thanks{$^\dag$The author was partially supported by Para\'iba State Research Foundation (FAPESQ), Grant \#2023/2158.}

\author[Sviridova]{Irina Sviridova}
\address{Departamento de Matem\'{a}tica, Universidade de Bras\'{i}lia, Campus Universit\'ario Darcy Ribeiro, ICC, Ala Centro, Bloco A, CEP 70910-900, Bras\'{i}lia, Distrito Federal, Brasil}
\email{\href{mailto: I.Sviridova@mat.unb.br}{I.Sviridova@mat.unb.br}}

\keywords{$\mathsf{G}$-graded bimodule, $\mathsf{G}$-simple bimodule, $\mathsf{G}$-irreducible bimodule, weak $\mathsf{G}$-noetherian bimodule, weak $\mathsf{G}$-artinian bimodule, Jacobson radical, Pierce decomposition, Specht's Problem}

\subjclass[2020]{Primary 16D20; Secondary 16D70, 16W50, 16P20, 16P40}


\begin{abstract}
It is well known that the ring radical theory can be approached via language of modules. In this work, we present some generalizations of classical results from module theory, in the two-sided and graded sense. Let $\mathsf{G}$ be a group, $\mathbb{F}$ an algebraically closed field with $\mathsf{char}(\mathbb{F})=0$, $\mathfrak{A}$ a finite dimensional $\mathsf{G}$-graded associative $\mathbb{F}$-algebra and $\mathsf{M}$ a $\mathsf{G}$-graded unitary $\mathfrak{A}$-bimodule. We proved that if $\mathfrak{A}=M_n(\mathbb{F}^\sigma[\mathsf{H}])$ with a canonical elementary $\mathsf{G}$-grading, where $\mathsf{H}$ is a finite abelian subgroup of $\mathsf{G}$ and $\sigma\in\mathsf{Z}^2(\mathsf{H},\mathbb{F}^*)$, then $\mathsf{M}$ being irreducible graded implies that there exists a nonzero homogeneous element $w\in\mathsf{M}$ satisfying $\mathsf{M}=\mathfrak{B}w$ and $\mathfrak{B} w= w\mathfrak{B}$. Another result we proved generalizes the last one: if $\mathsf{G}$ is abelian, $\mathfrak{A}$ is simple graded and $\mathsf{M}$ is finitely generated, then there exist nonzero homogeneous elements $w_1, w_2,\dots,w_n\in\mathsf{M}$ such that 
	\begin{equation}\nonumber
\mathsf{M}=\mathfrak{A} w_1\oplus\mathfrak{A} w_2\oplus \cdots \oplus \mathfrak{A} w_n \ , 
	\end{equation}
where $w_i \mathfrak{A}=\mathfrak{A} w_i\neq0$ for all $i=1, 2,\dots,n$, and each $\mathfrak{A} w_i$ is irreducible. The elements $w_i$'s are associated with the irreducible characters of $\mathsf{G}$. We also describe graded bimodules over graded semisimple algebras. And we finish by presenting a Pierce decomposition of the graded Jacobson radical of any finite dimensional $\mathbb{F}$-algebra with a $\mathsf{G}$-grading.
\end{abstract}


\maketitle

\section{Introduction}


Amitsur started in \cite{Amit52} the construction of the general theory of radicals. As explained by Kurosch in his work \cite{Kuro53}, the concept of \textit{radical} has great importance in the study of associative algebras and rings. The study of radicals constitutes an important tool for the development of Ring Theory. In \cite{Amit54.1}, Amitsur presents an axiomatic foundation for radicals of rings. From this, Andrunakievi\v{c} and Rjabuhin showed in \cite{AndrRjab64} that the general theory of radicals of associative rings developed by Kurosh and Amitsur (and others) may be presented in the language of modules, and consequently in the language of representations.

In \cite{Jaco51}, Jacobson initiated the study of bimodules\footnote{An $(\mathfrak{A},\widetilde{\mathfrak{A}})$-bimodule $\mathsf{M}$ is a left $\mathfrak{A}$-module and a right $\widetilde{\mathfrak{A}}$-module that satisfies $a(mb)=(am)b$ for any $a\in\mathfrak{A}$, $b\in\widetilde{\mathfrak{A}}$ and $m\in\mathsf{M}$.} of Jordan algebras, and in \cite{Jaco54}, the author presented a study about the structure of alternative and Jordan bimodules. In turn, Foster, in \cite{Fost73}, presented a study showing that the general theory of radicals of associative rings can be naturally extended to varieties of algebras where the modules are replaced by bimodules. Already in \cite{ShesTrus16}, Shestakov and Trushina described the irreducible birepresentations of alternative\footnote{An algebra $\mathfrak{A}$ is called an \textit{alternative algebra} if satisfies $(xx)y=x(xy)$ and $(xy)y=x(yy)$ for any $x,y\in\mathfrak{A}$.} algebras and superalgebras. For more details about \textit{radicals}, \textit{(bi)modules} and \textit{representations (of groups)}, see \cite{CurtRein62,GardWieg03,Hers05,MarkWieg93,Wisb96}.

Let $\mathsf{R}$ and $\mathsf{S}$ be two rings, and $\mathsf{M}$ an $(\mathsf{R,S})$-bimodule. Remember that any ring is a $\mathbb{Z}$-bimodule. In \cite{BrowSmit85}, Brown and Smith examined what properties of $\mathsf{M}$ are guaranteed when $\mathsf{R}$ and $\mathsf{S}$ are noetherian and $\mathsf{M}$ is finitely generated as a left $\mathsf{R}$-module and as a right $\mathsf{S}$-module. The study of these noetherian bimodules played an important role in advancing in noetherian ring theory.

Giambruno and Zaicev, on the other hand, in \cite{GiamZaic03}, needed to describe the Jacobson radical $\mathsf{J}$ of an algebra $\mathfrak{A}=\mathfrak{B}\oplus\mathsf{J}$, which is a $\mathfrak{B}$-bimodule, to prove their main results in the work, where $\mathfrak{B}=M_n(\mathbb{F})$ is the algebra of $n\times n$ matrices over a field $\mathbb{F}$. According to Wisbauer (\cite{Wisb96}), one of the most important techniques in the theory of commutative associative algebras, and which can be successfully transferred to non-commutative associative algebras via one-sided modules, is the homological characterization of an algebra $\mathfrak{A}$ in the category of (one-sided) $\mathfrak{A}$-modules. Once the study of module theory has been used to describe the structure of algebras, Wisbauer devoted almost 400 pages of his book \cite{Wisb96} to presenting a compilation of the theory.

%

The problems exposed above show the importance of studying modules and radicals, and that famous mathematicians and algebraists have studied -- especially the properties of these structures -- these topics in the last decades. In this sense, we study in this work algebraic objects known as \textit{bimodules}. More specifically, here we study \textit{bimodules over algebras graded by groups}. Our results imply in a characterization of graded Jacobson radical of a suitable family of algebras.

Algebraic structures with gradings by groups ensure a very rich field of research, in ring theory, because from a structure of grading we can deduce properties of the ordinary object (i.e. object without grading). In \cite{BahtSehgZaic08}, Theorem 3, Bahturin, Sehgal and Zaicev showed that, when $\mathsf{G}$ is a group and $\mathbb{F}$ is an algebraically closed field with $\mathsf{char}(\mathbb{F})=0$, ``\textit{any finite-dimensional $\mathsf{G}$-graded $\mathbb{F}$-algebra $\mathfrak{A}$ is simple graded iff $\mathfrak{A}$ is isomorphic to $M_{k}(\mathbb{F}^{\sigma}[\mathsf{H}])$, where $\mathsf{H}$ is a finite subgroup of $\mathsf{G}$ and $\sigma$ is a $2$-cocycle on $\mathsf{H}$}''. Based on this lastest work, in \cite{Svir11}, Lemma 2, Sviridova presented a generalization of Wedderburn-Malcev Theorem\footnote{The Wedderburn-Malcev Theorem (see \cite{CurtRein62}, Theorem 72.19, p.491), which is a generalization made by Malcev, in \cite{Malc42}, of one of Wedderburn's Theorem, states that for any finite dimensional algebra over a field $\mathbb{F}$ such that $\mathfrak{A}/\mathsf{J}(\mathfrak{A})$ is separable, there exists a unique (up to the isomorphism) maximal semisimple subalgebra $\mathfrak{B}$ of $\mathfrak{A}$ satisfying $\mathfrak{A}=\mathfrak{B}\oplus\mathsf{J}(\mathfrak{A})$.} for algebras graded by a (finite abelian) group. She established that, in suitable conditions, any finite dimensional graded algebra $\mathfrak{A}$ is a direct sum of a maximal semisimple graded subalgebra $M_{k_1}(\mathbb{F}^{\sigma_1}[\mathsf{H}_1]) \times \cdots \times M_{k_p}(\mathbb{F}^{\sigma_p}[\mathsf{H}_p])$, where $\mathsf{H}_1, \dots,\mathsf{H}_p$ are finite subgroups of $\mathsf{G}$, and its Jacobson radical $ \mathsf{J}(\mathfrak{A})$, which is graded. Note that $\mathsf{J}(\mathfrak{A})$ is naturally a graded $\mathfrak{A}$-bimodule, and obviously a graded $\mathfrak{A}/\mathsf{J}(\mathfrak{A})$-bimodule. Therefore, it is important to study graded bimodules over simple graded algebras, and in special the graded bimodules over the matrix algebras $M_{r}(\mathbb{F}^{\sigma}[\mathsf{H}])$'s, since any finite-dimensional simple graded algebras are given by matrix algebras.

In this work, our interest is to study the graded bimodules and its irreducible graded subbimodules, as well as to apply this study to describe the graded Jacobson radical of a graded algebra. We studied the irreducible graded subbimodules of a graded bimodule $\mathsf{M}$, as well as the possibles decompositions for $\mathsf{M}$ in sum of irreducible graded objects. Basically, we have answered the following question:

	\vspace{0.15cm}
	\noindent{\bf Problem:} How to describe the irreducible graded bimodules?
	\vspace{0.15cm}

In this way, we have some natural questions: how to characterize the irreducible graded bimodules? When is possible to write a graded bimodule as sum of irreducible graded subbimodules? How do  the considered algebras interfere in the bimodules? Is it possible to present a decomposition of Jacobson radical (of a finite dimensional graded algebra) in irreducible graded subbimodules? If so, how and what are the consequences?

Let $\mathbb{F}$ be a field, $\mathsf{G}$ a group and $\mathfrak{A}$ and $\widetilde{\mathfrak{A}}$ two associative algebras. Consider $\mathsf{M}$ an $(\mathfrak{A},\widetilde{\mathfrak{A}})$-bimodule. Now, consider $\mathsf{G}$-gradings on $\mathfrak{A}$ and $\widetilde{\mathfrak{A}}$, namely, $\mathfrak{A}=\oplus_{g\in\mathsf{G}}\mathfrak{A}_g$ and $\widetilde{\mathfrak{A}}=\oplus_{h\in\mathsf{G}}\widetilde{\mathfrak{A}}_h$, and $\mathsf{M}=\oplus_{t\in\mathsf{G}}\mathsf{M}_t$ a $\mathsf{G}$-grading on $\mathsf{M}$, i.e. the $\mathsf{M}_t$'s are subspaces of $\mathsf{M}$ which satisfy $\mathfrak{A}_g\mathsf{M}_t \subset \mathsf{M}_{gt}$ and $\mathsf{M}_t \widetilde{\mathfrak{A}}_h \subset \mathsf{M}_{th}$ for any $t,g,h\in\mathsf{G}$. To more details, as well as an overview, about graded algebras and rings, as well as general theory of modules, see the works \cite{NastOyst04,NastOyst11,Wisb96} and their references.

Firstly, we record in $\S$\ref{preliminary} definitions and concepts necessary for the better development of this work. The reader familiarized with objects as \textit{$\mathsf{G}$-graded bimodules}, \textit{$\mathsf{G}$-noetherian bimodules}, \textit{$\mathsf{G}$-artinian bimodules}, \textit{2-cocycles on $\mathsf{G}$}, \textit{twisted group algebra} and \textit{group characters}, can continue reading from page \pageref{firstresults},  returning if necessary. Later, we dedicated in $\S$\ref{firstresults} of this work to generalize (in language of the graded bimodules) some classic results of (one-sided) modules theory, as Jordan-H\"{o}lder Theorem, Isomorphisms theorems and Correspondence Theorem. The main theorem of this first results section is given by:

	\vspace{0.15cm}
	\noindent{\bf Theorem \ref{1.17}}: If $\mathfrak{A}$ and $\widetilde{\mathfrak{A}}$ are weak $\mathsf{G}$-artinian and weak $\mathsf{G}$-noetherian, then the following conditions are equivalent: i) $\mathsf{M}$ is finitely generated; ii) $\mathsf{M}$ is weak $\mathsf{G}$-noetherian; iii) $\mathsf{M}$ is weak $\mathsf{G}$-artinian and weak $\mathsf{G}$-noetherian; iv) $\mathsf{M}$ has a graded composition series.
	\vspace{0.15cm}

In the theorem above, the word ``weak'' refers to the chains being of (graded) subbimodules. Other important result of this section, which is presented in the subsection \ref{generlemagiam}, is inspired by Lemma 2 in \cite{GiamZaic03} and it is given by

	\vspace{0.15cm}
	\noindent{\bf Theorem \ref{1.27}}
Suppose $\mathfrak{A}$ and $\widetilde{\mathfrak{A}}$ two unitary algebras. If $\mathsf{M}$ has a finite dimension (as vector space), then $\mathsf{M}$ can be decomposed as 
$
\mathsf{M}=\mathsf{M}_{00}\oplus \mathsf{M}_{10}\oplus \mathsf{M}_{01}\oplus \mathsf{M}_{11},
$ 
where $\mathsf{M}_{ij}$'s are $\mathsf{G}$-graded $(\mathfrak{A},\widetilde{\mathfrak{A}})$-bimodules such that:
	\begin{itemize}
		\item[i)] for $r=0,1$, $\mathsf{M}_{0r}$ is a $0$-left $\mathfrak{A}$-module and $\mathsf{M}_{1r}$ is a unitary left $\mathsf{G}$-graded $\mathfrak{A}$-module;
		\item[ii)] for $s=0,1$, $\mathsf{M}_{s0}$ is a $0$-right $\widetilde{\mathfrak{A}}$-module and $\mathsf{M}_{s1}$ is a unitary right $\mathsf{G}$-graded $\widetilde{\mathfrak{A}}$-module;
		\item[iii)] $\mathsf{M}_{11}$ is a unitary $\mathsf{G}$-graded $(\mathfrak{A},\widetilde{\mathfrak{A}})$-bimodule.
	\end{itemize}
	In addition, if $\mathfrak{A}$ (resp. $\widetilde{\mathfrak{A}}$) is simple graded, then $\mathsf{M}_{1r}$ (resp. $\mathsf{M}_{r1}$) is faithful on the left (resp. on the right), for $i=0,1$.
	\vspace{0.15cm}

In consequence from the previous result, we show in Corollary \ref{1.31} that $\mathsf{M}$ is a unitary $(\mathfrak{A}, \widetilde{\mathfrak{A}})$-bimodule if and only if $\mathsf{M}=\mathsf{M}_{11}$. In $\S$\ref{simplealgebras}, we describe the unitary bimodule $\mathsf{M}$.

%

The main results of this work are in $\S$\ref{simplealgebras}. The aim of this section is to exhibit a description of irreducible graded bimodules and a (special) decomposition of finitely generated graded bimodules. Given $\mathsf{H}$ a finite abelian subgroup of a group $\mathsf{G}$, $\sigma\in\mathsf{Z}^2(\mathsf{H},\mathbb{F}^*)$ a $2$-cocycle on $\mathsf{H}$, $\mathfrak{B}=M_n(\mathbb{F}^\sigma[\mathsf{H}])$ the $n\times n$ matrix algebra over $\mathbb{F}^\sigma[\mathsf{H}]$, and $\mathsf{M}$ a $\mathsf{G}$-graded $\mathfrak{B}$-bimodule, the key object for the most important results in this section is given by elements of the form
	\begin{equation}\label{1.61}
\hat{w}_{\chi}=\sum_{i=1}^n\left(\sum_{h\in \mathsf{H}} \chi(h)\sigma(h,h^{-1})^{-1} \eta_h E_{i1} m_0 E_{1i}\eta_{h^{-1}}\right) \ , 
	\end{equation}
where $\chi: \mathsf{H}\rightarrow \mathbb{F}^*$ is an irreducible character of $\mathsf{H}$, $E_{ij}$'s are the elementary matrices of $M_n(\mathbb{F})$ and $\eta_{h}E_{i1},E_{1i}\eta_{h^{-1}}\in M_n(\mathbb{F}^\sigma[\mathsf{H}])$, and $m_0\in\mathsf{M}$ is a nonzero homogeneous element. Here, $\mathfrak{B}$ is $\mathsf{G}$-graded by a canonical elementary defined by an $n$-tuple $(g_1, \dots, g_n)\in\mathsf{G}^n$ (i.e. $\mathfrak{B}=\bigoplus_{g\in\mathsf{G}}\mathfrak{B}_g$, where $\mathfrak{B}_g=\mathsf{span}_{\mathbb{F}}\{E_{ij}\eta_h\in\mathfrak{B}: g_i^{-1}hg_j=g\}$), and the elements $\hat{w}_{\chi}$'s belong to $\mathsf{M}$ and their product by the basic (homogeneous) elements $E_{ij}\eta_g$'s of $M_n(\mathbb{F}^\sigma[\mathsf{H}])$ is commutative, up to a scalar. The key and more important result of $\S$\ref{simplealgebras} is the 

	\vspace{0.15cm}
	\noindent{\bf Theorem \ref{1.03}}:
Let $\mathsf{G}$ be a group, $\mathsf{H}$ a finite abelian subgroup of $\mathsf{G}$, and $\mathbb{F}$ an algebraically closed field such that $\mathsf{char}(\mathbb{F})=0$. Consider $\mathsf{M}$ a $\mathsf{G}$-graded unitary $\mathfrak{B}$-bimodule, where $\mathfrak{B}=M_n(\mathbb{F}^\sigma[\mathsf{H}])$ with a canonical elementary $\mathsf{G}$-grading and $\sigma\in\mathsf{Z}^2(\mathsf{H},\mathbb{F}^*)$. If $\mathsf{M}$ is irreducible graded, then there exists a nonzero homogeneous element $w\in\mathsf{M}$ satisfying $\mathsf{M}=\mathfrak{B}w$, such that $\mathfrak{B} w= w\mathfrak{B}$.
	\vspace{0.15cm}

The element $w$ defined in the previous theorem is associated with an irreducible character of $\mathsf{H}$, and it is defined similarly to (\ref{1.61}).
%

In the result below, we present a decomposition for the graded bimodules over a simple graded algebra when such bimodules are finitely generated. In its proof we use the elements $\hat{w}_{\chi}$'s. This result is very important, and gives a decomposition into irreducible (subbimodules) similar to the one-sided case of modules. Such decomposition will be used to give a characterization of Jacobson radical of a family of the finite dimensional algebras. 

	\vspace{0.15cm}
	\noindent{\bf Theorem \ref{1.83}}: Let $\mathbb{F}$ be an algebraically closed field, $\mathsf{G}$ an abelian group and $\mathfrak{A}$ a finite dimensional algebra over $\mathbb{F}$ with a $\mathsf{G}$-grading. Suppose that $\mathsf{char}(\mathbb{F})=0$, and $\mathfrak{A}$ is simple graded. If $\mathsf{M}$ is a finitely generated $\mathsf{G}$-graded $\mathfrak{A}$-bimodule, then there exist nonzero homogeneous elements $w_1, w_2,\dots,w_n\in\mathsf{M}$ such that 
	\begin{equation}\nonumber
\mathsf{M}=\mathfrak{A} w_1\oplus \mathfrak{A} w_2\oplus \cdots \oplus \mathfrak{A} w_n
	\end{equation}
where $w_i \mathfrak{A}=\mathfrak{A} w_i\neq0$ for all $i=1,2,\dots,n$, and $\mathfrak{A} w_i$ is irreducible.
	\vspace{0.15cm}

We finalized $\S$\ref{simplealgebras} exhibiting a decomposition of bimodules over weak semisimple algebras in the form of a Pierce decomposition (see  Theorem \ref{1.29}). We prove that ``\textit{if $\mathfrak{A}$ and $\widetilde{\mathfrak{A}}$ are weak semisimple graded, and $\mathsf{M}$ is a unitary $\mathsf{G}$-graded $(\mathfrak{A},\widetilde{\mathfrak{A}})$-bimodule, then there exist central orthogonal idempotent elements $\mathfrak{i}_1,\dots,\mathfrak{i}_p\in\mathfrak{A}$ and $\hat{\mathfrak{i}}_1,\dots,\hat{\mathfrak{i}}_q\in\widetilde{\mathfrak{A}}$ such that $\mathfrak{A}=\bigoplus_{r=1}^p\mathfrak{A}_r$, $\widetilde{\mathfrak{A}}=\bigoplus_{s=1}^q\widetilde{\mathfrak{A}}_s$, and $\mathsf{M}=\bigoplus_{r,s=1}^{p,q} \mathfrak{i}_r \mathsf{M} \hat{\mathfrak{i}}_s$, where $\mathfrak{A}_r=\mathfrak{A}\mathfrak{i}_r$, $\widetilde{\mathfrak{A}}_s=\widetilde{\mathfrak{A}}\hat{\mathfrak{i}}_s$}''. Moreover, we showed that each $\mathfrak{i}_r \mathsf{M} \hat{\mathfrak{i}}_s$ is either equal to $\{0\}$ or  a faithful $\mathsf{G}$-graded $(\mathfrak{A}_r,\widetilde{\mathfrak{A}}_s)$-bimodule.

We finalize this work with one (our two) application of our main results. A big problem in the study of Ring Theory arises when we try to describe the properties of a ring $\mathfrak{R}$, more precisely, its Jacobson radical. In what follows, we exhibit a decomposition of the Jacobson radical of an algebra which can be used as a tool in future studies of the theory. For more details about Jacobson radical, we recommend the books \cite{Hers05,Jaco64,Rotm10}.

Let $\mathsf{G}$ be a group and $\mathfrak{A}=\mathfrak{B}\oplus\mathsf{J}$ a finite dimensional algebra with a $\mathsf{G}$-grading, where $\mathfrak{B}$ is a maximal semisimple $\mathsf{G}$-graded subalgebra of $\mathfrak{A}$, and $\mathsf{J}=\mathsf{J}(\mathfrak{A})$ is the Jacobson radical and a graded ideal of $\mathfrak{A}$. 
Besides of the decomposition guaranteed by Theorem \ref{1.27}, the Theorem \ref{1.30} states that if $\mathfrak{B}=M_{k_1}(\mathbb{F}^{\sigma_1}[\mathsf{H}_1]) \times \cdots \times M_{k_p}(\mathbb{F}^{\sigma_p}[\mathsf{H}_p])$, $\mathsf{G}$ is finite abelian, and $\mathbb{F}$ is an algebraically closed field with $\mathsf{char}(\mathbb{F})=0$, then
	\begin{itemize} 
		\item[a)] $\mathsf{J}_{11}=\bigoplus_{s,r=1}^p \mathfrak{i}_r \mathsf{J}_{11} \mathfrak{i}_s$, where each $\mathfrak{i}_r \mathsf{J}_{11} \mathfrak{i}_s$ is a $\mathsf{G}$-graded $(\mathfrak{B}_r,\mathfrak{B}_s)$-bimodule, where $\mathfrak{B}=\bigoplus_{i=1}^p\mathfrak{B}_i$ with $\mathfrak{B}_i=M_{k_i}(\mathbb{F}^{\sigma_i}[\mathsf{H}_i])$. In addition, $\mathfrak{i}_r \mathsf{J}_{11} \mathfrak{i}_s\neq0$ implies that $\mathfrak{i}_r \mathsf{J}_{11} \mathfrak{i}_s$ is a faithful left $\mathfrak{B}_i$-module and a faithful right $\mathfrak{B}_j$-module;
		\item[b)] For each $s=1,\dots, p$, there exists a $\mathsf{G}$-graded vector space $\mathsf{N}_s=\mathsf{span}_\mathbb{F}\{d_{1s},\dots,d_{r_s s}\}\subset\mathfrak{i}_s \mathsf{J}_{11} \mathfrak{i}_s$ such that $\mathfrak{i}_s \mathsf{J}_{11} \mathfrak{i}_s=\mathfrak{B}_s \mathsf{N}_s$ and $bd_{is}=\gamma_{is}(b)d_{is}b\neq0$ for any nonzero $b\in\beta_s$, and $i=1,\dots,r_s$, where $\gamma_{is}\in\mathbb{F}$, and $\beta_s=\{E_{l_sj_s}\eta_{h_s}\in\mathfrak{B}_s: l_s,j_s=1,\dots, k_s, h_s\in \mathsf{H}_s\}$ is the canonical homogeneous basis of $\mathfrak{B}_s=M_{k_s}(\mathbb{F}^{\sigma_s}[\mathsf{H}_s])$. Moreover, for each $i=1,\dots, r_s$, we have that $\mathfrak{B}_s d_{is}$ is a $\mathsf{G}$-simple $\mathfrak{B}_s$-bimodule.
	\end{itemize}

The decomposition of Jacobson radical of a finite dimensional graded algebra presented in the previous theorem can be used to describe graded varieties\footnote{A graded variety generated by $S\subset\mathbb{F}\langle X^\mathsf{G} \rangle$, denoted by $\mathsf{var}^\mathsf{G}(S)$, is the class of graded associative algebras that satisfy any $g\in S$, i.e. a $\mathsf{G}$-graded algebra $\mathfrak{A}$ belongs to $\mathsf{var}^\mathsf{G}(S)$ iff $g\equiv_\mathsf{G}0$ in $\mathfrak{A}$ for any $g\in S$. For more details about (graded) varieties of (graded) algebras, see \cite{Dren00}, Chapter 2, or \cite{GiamZaic05}, Chapter 1.} of graded algebras, which are studied in PI-Theory. A famous problem in PI-Theory is the well known {\bf Specht's Problem}\footnote{Specht's Problem was purposed in \cite{Spec50} by W. Specht (1950), and it can be formulated by the following question: given any algebra $\mathfrak{A}$, is any set of polynomial identities of $\mathfrak{A}$ a consequence of a finite number of identities of $\mathfrak{A}$? For more details about Specht's Problem, see \cite{BeloRoweVish12}.}. Kemer showed in \cite{Keme91} that the Specht's Problem has a positive solution in the variety of associative algebras of characteristic zero. In \cite{Svir11}, Sviridova proved the graded case (see Theorem 2, \cite{Svir11}). Specifically, Sviridova showed that ``\textit{if $\mathbb{F}$ is an algebraically closed field of characteristic zero, and $\mathsf{G}$ is any finite abelian group, then any $\mathsf{G} T$-ideal of graded identities of a $\mathsf{G}$-graded associative $PI$-algebra over $\mathbb{F}$ coincides with the ideal of $\mathsf{G}$-graded identities of the $\mathsf{G}$-graded Grassmann envelope of some finite dimensional $\mathsf{G} \times \mathbb{Z}_2$-graded associative $\mathbb{F}$-algebra}''. It is important to comment that Aljadeff and Kanel-Belov proved in \cite{AljaBelo10} (see Theorem 1.3), independently of Sviridova, a similar result for any finite group. From this, to study a graded variety $\mathfrak{W}$ of $\mathsf{G}$-graded associative algebras, it is enough to study the {\bf Grassmann envelope}\footnote{The Grassmann envelope of a $(\mathsf{G}\times\mathbb{Z}_2)$-graded algebra $\mathfrak{A}$ is given by $\mathsf{E}^\mathsf{G}(\mathfrak{A})=\left(\mathfrak{A}_0\otimes \mathsf{E}_0 \right) \oplus \left(\mathfrak{A}_1\otimes \mathsf{E}_1 \right)$, which is naturally $\mathsf{G}\times\mathbb{Z}_2$-graded and $\mathsf{G}$-graded. Here, $\mathsf{E}=\mathsf{E}_0\oplus \mathsf{E}_1$ is an infinitely generated non-unitary Grassmann algebra, i.e. $\mathsf{E}=\langle{e_1,e_2,e_3,\dots\mid e_ie_j=-e_je_i, \forall i,j}\rangle$ is $\mathbb{Z}_2$-graded with $\mathsf{E}_0=\mathsf{span}_\mathbb{F}\{e_{i_1}e_{i_2}\cdots e_{i_n}: n \mbox{ is even}\}$, and $\mathsf{E}_1=\mathsf{span}_\mathbb{F}\{e_{j_1}e_{j_2}\cdots e_{j_m}: m \mbox{ is odd}\}$ (see $\S$3.7 in \cite{GiamZaic05}, p.80-83).} of a finite dimensional $\mathsf{G}\times\mathbb{Z}_2$-graded algebra of type $\mathfrak{A}=\left(M_{k_1}(\mathbb{F}^{\sigma_1}[\mathsf{H}_1]) \times \cdots \times M_{k_p}(\mathbb{F}^{\sigma_p}[\mathsf{H}_p])\right) \oplus \mathsf{J}$, where $\mathsf{J}=\mathsf{J}(\mathfrak{A})$ has a decomposition as the described in Theorem \ref{1.30}.

To conclude, in the conditions of Corollary \ref{1.32}, i.e. the units $\mathfrak{i}_r$'s of the $M_{k_r}(\mathbb{F}^{\sigma_r}[\mathsf{H}_r])$'s are central in $\mathfrak{A}$, we have that $\mathsf{J}=\mathsf{J}_{00}\oplus\mathsf{J}_{11}$, where $\mathsf{J}_{11}=\mathfrak{i}_1 \mathsf{J} \mathfrak{i}_1\oplus\cdots\oplus\mathfrak{i}_p \mathsf{J} \mathfrak{i}_p$. And thus, it is easy to see that
\begin{equation}\nonumber
	\mathfrak{A}\cong_{\mathsf{G}}  \left(M_{k_1}(\mathbb{F}^{\sigma_1}[\mathsf{H}_1])\oplus\mathfrak{i}_1 \mathsf{J} \mathfrak{i}_1 \right) \times \cdots \times \left(M_{k_p}(\mathbb{F}^{\sigma_p}[\mathsf{H}_p])\oplus\mathfrak{i}_p \mathsf{J} \mathfrak{i}_p \right) \times\mathsf{J}_{00} \ .
	\end{equation}
Therefore, to study the graded variety $\mathfrak{W}$, and consequently its algebras, it is enough to study the algebras $\mathsf{E}^\mathsf{G}\left(M_{k_r}(\mathbb{F}^{\sigma_r}[\mathsf{H}_r])\oplus \mathfrak{i}_r \mathsf{J} \mathfrak{i}_r\right)$'s, and more specifically, the algebras $\mathsf{E}^\mathsf{G}\left(M_{k_r}(\mathbb{F}^{\sigma_r}[\mathsf{H}_r])\right)$'s and $\mathsf{E}^\mathsf{G}\left( \mathfrak{i}_r \mathsf{J} \mathfrak{i}_r\right)$'s.


\section{Preliminary Results}\label{preliminary}

In this section, we recall some concepts, although basic, which are important for our study. We talk a little bit about the definitions of ``\textit{algebra grading by a group}'', ``\textit{bimodules over algebras}'' and ``\textit{graded bimodules over graded algebras}'', as well as other important definitions. Later, let us introduce ``\textit{$\mathsf{G}$-graded bimodules}'', ``\textit{weak $\mathsf{G}$-noetherian bimodules}'' and ``\textit{weak $\mathsf{G}$-artinian bimodules}'', and other definitions connected to graded bimodules. We dedicate a part of the work to recall some (important) definitions and properties of ``\textit{2-cocycles}'', ``\textit{twisted group algebra}'' and ``\textit{group characters}''. The reader familiarized with these definitions and their main properties can continue from $\S$\ref{firstresults}. Here, unless stated otherwise, all the algebras $\mathfrak{A}$ are associative algebras over a field $\mathbb{F}$, and $\mathsf{G}$ is a group.

\subsection{Bimodules over Algebras}

Initially, let us define bimodules over algebras, as well as other related definitions. Let us assume known the main definitions and properties of the (one-sided) modules over algebras. For a detailed view of (one-sided) modules over algebras, we suggest reading Chapter 2 in \cite{CurtRein62}, or Chapter 6 in \cite{Rotm10}.


\begin{definition}
	Let $\mathbb{F}$ be a field, $\mathfrak{A}$ and $\widetilde{\mathfrak{A}}$ two $\mathbb{F}$-algebras (not necessarily unitary), and $\mathsf{M}$ an $\mathbb{F}$-vector space. We say that $\mathsf{M}$ is an {\bf $(\mathfrak{A}, \widetilde{\mathfrak{A}})$-bimodule} if it is a left $\mathfrak{A}$-module and a right $\widetilde{\mathfrak{A}}$-module, and its multiplication by scalar satisfies the associative law
	\begin{equation}\nonumber
r(ms)=(rm)s
	\end{equation}
for any $r\in\mathfrak{A}$, $s\in\widetilde{\mathfrak{A}}$ and $m\in\mathsf{M}$. When $\mathfrak{A}=\widetilde{\mathfrak{A}}$, we say that $\mathsf{M}$ is an {\bf $\mathfrak{A}$-bimodule}. When $\mathfrak{A}$ and $\widetilde{\mathfrak{A}}$ are unitary, $\mathsf{M}$ is a {\bf unitary $(\mathfrak{A}, \widetilde{\mathfrak{A}})$-bimodule} iff $\mathsf{M}$ is a unitary left $\mathfrak{A}$-module and a unitary right $\widetilde{\mathfrak{A}}$-module.
\end{definition}

Let $\mathfrak{A}$ and $\widetilde{\mathfrak{A}}$ be two algebras, and $\mathsf{M}$ an $(\mathfrak{A},\widetilde{\mathfrak{A}})$-bimodule. Recall that a \textbf{left} (resp. \textbf{right}) \textbf{submodule} $\mathsf{N}$ of $\mathsf{M}$ is a subspace of $\mathsf{M}$ which is also a left $\mathfrak{A}$-module (resp. right $\tilde{\mathfrak{A}}$-module). Take a subset $S$ of $\mathsf{M}$. 
Let us denote by $\mathfrak{A} S$ (resp. $S\tilde{\mathfrak{A}}$) the left (resp. right) submodule of $\mathsf{M}$ given by $\mathfrak{A} S=\mathsf{span}_{\mathbb{F}}\left\{r m \in\mathsf{M} : r\in\mathfrak{A}, m\in S \right\}$ (resp. $S\tilde{\mathfrak{A}}=\mathsf{span}_{\mathbb{F}}\{ \tilde{m} s \in\mathsf{M} : s\in\tilde{\mathfrak{A}}, \tilde{m}\in S \}$). 
 Observe that not always $S$ is a subset of $\mathfrak{A} S$ (resp. $S\tilde{\mathfrak{A}}$). From this, we define the \textbf{left} (resp. \textbf{right}) \textbf{submodule of $\mathsf{M}$ generated by $S$}, denoted by $_\mathfrak{A} S$ (resp. $S_{\tilde{\mathfrak{A}}}$), as being $_\mathfrak{A} S=\mathsf{span}_\mathbb{F}\{m\in S\}+\mathfrak{A} S$ (resp. $S_{\tilde{\mathfrak{A}}}=\mathsf{span}_\mathbb{F}\{m\in S\}+S\tilde{\mathfrak{A}}$). 
Observe that, necessarily, $S$ is a subset of $_\mathfrak{A} S$ (resp. $S_{\tilde{\mathfrak{A}}}$). If $\mathfrak{A}$ (resp. $\tilde{\mathfrak{A}}$) is unitary, and $\mathsf{M}$ is a unitary left $\mathfrak{A}$-module (resp. right $\tilde{\mathfrak{A}}$-module), then $_\mathfrak{A} S=\mathfrak{A} S$ (resp. $S_{\tilde{\mathfrak{A}}}=S\tilde{\mathfrak{A}}$), and $_\mathfrak{A} S$ (resp. $S_{\tilde{\mathfrak{A}}}$) is a unitary left $\mathfrak{A}$-module (resp. right $\tilde{\mathfrak{A}}$-module). Already a {\bf subbimodule} $\mathsf{N}$ of $\mathsf{M}$ is a subspace of $\mathsf{M}$ which is also an $(\mathfrak{A}, \widetilde{\mathfrak{A}})$-bimodule. Let us denote by $ \mathfrak{A} S\widetilde{\mathfrak{A}}$ the subbimodule of $\mathsf{M}$ given by $\mathfrak{A} S\widetilde{\mathfrak{A}}=\mathsf{span}_{\mathbb{F}}\{q \breve{m} p\in\mathsf{M} : q\in\mathfrak{A},p\in\widetilde{\mathfrak{A}}, \breve{m}\in S\}$. We define the {\bf subbimodule of $\mathsf{M}$ generated by $S$}, denoted by $_\mathfrak{A} S_{\widetilde{\mathfrak{A}}}$, as being $_\mathfrak{A} S_{\widetilde{\mathfrak{A}}}=\mathsf{span}_\mathbb{F}\{m\in S\}+\mathfrak{A} S + S\widetilde{\mathfrak{A}} + \mathfrak{A} S\widetilde{\mathfrak{A}}$. Observe that $\mathfrak{A} S\widetilde{\mathfrak{A}}\subseteq {_\mathfrak{A} S}_{\widetilde{\mathfrak{A}}}$ and $S\subseteq {_\mathfrak{A} S}_{\widetilde{\mathfrak{A}}}$, but not necessarily $S$ belongs to $\mathfrak{A} S\widetilde{\mathfrak{A}}$. 
If $S=\{m\}$, we denote $\mathfrak{A} S$, $S\tilde{\mathfrak{A}}$ and $\mathfrak{A} S\widetilde{\mathfrak{A}}$ by $\mathfrak{A} m$, $m\tilde{\mathfrak{A}}$ and $\mathfrak{A} m\widetilde{\mathfrak{A}}$, respectively, when no confusion can arise. When $\mathfrak{A}$ and $\widetilde{\mathfrak{A}}$ are unitary, and $\mathsf{M}$ is a unitary $(\mathfrak{A},\widetilde{\mathfrak{A}})$-bimodule, we have that $_\mathfrak{A} S_{\widetilde{\mathfrak{A}}}=\mathfrak{A} S\widetilde{\mathfrak{A}}$, and $_\mathfrak{A} S_{\widetilde{\mathfrak{A}}}$ is a unitary $(\mathfrak{A},\widetilde{\mathfrak{A}})$-bimodule.


\begin{definition}
An $(\mathfrak{A},\widetilde{\mathfrak{A}})$-bimodule $\mathsf{M}$ is called {\bf irreducible} (or {\bf simple}) if $\mathfrak{A} \mathsf{M}\widetilde{\mathfrak{A}}\neq\{0\}$, and $\{0\}$ and $\mathsf{M}$ are the only subbimodules of $\mathsf{M}$. Particularly, the condition $\mathfrak{A}\mathsf{M}\widetilde{\mathfrak{A}}\neq\{0\}$ means that $am\tilde{a}\neq0$ for some $a\in\mathfrak{A}$, $\tilde{a}\in\widetilde{\mathfrak{A}}$ and $m\in\mathsf{M}$.
\end{definition}

For any irreducible $(\mathfrak{A},\widetilde{\mathfrak{A}})$-bimodule $\mathsf{M}$, we have $\mathsf{M}=\mathfrak{A} m\widetilde{\mathfrak{A}}$ for any nonzero $m\in\mathsf{M}$. Indeed, it is sufficient to see that $\mathsf{N}=\{m\in\mathsf{M}: \mathfrak{A} m\widetilde{\mathfrak{A}}=\{0\}\}$ is a subbimodule of $\mathsf{M}$, and hence, we can conclude that $\mathsf{N}=\{0\}$. When $\mathsf{M}$ is a unitary $(\mathfrak{A},\widetilde{\mathfrak{A}})$-bimodule (and hence, $\mathfrak{A}$ and $\widetilde{\mathfrak{A}}$ are unitary also), the condition ``$\mathfrak{A}\mathsf{M}\widetilde{\mathfrak{A}}\neq\{0\}$'' is equivalent to condition ``$\mathsf{M}\neq\{0\}$''.

Notice that, given a subalgebra $I$ of $\mathfrak{A}$ such that $\mathfrak{A} I \mathfrak{A}\neq\{0\}$, $I$ is a (minimal) two-sided ideal of $\mathfrak{A}$ iff $I$ is an (irreducible) $\mathfrak{A}$-bimodule. 

Now, let us define a {\it quotient bimodule} and \textit{homomorphism of bimodules}. Let $\mathsf{N}$ be a subbimodule of an $(\mathfrak{A},\widetilde{\mathfrak{A}})$-bimodule $\mathsf{M}$. The {\bf quotient $(\mathfrak{A},\widetilde{\mathfrak{A}})$-bimodule} $\mathsf{M}/\mathsf{N}$ is defined as follow:
\begin{itemize}
	\item[i)] $\mathsf{M}/\mathsf{N}=\{\overline{m}=m+\mathsf{N}: m\in\mathsf{M}\}$ is a quotient vector space;
	\item[ii)] $a\overline{m}=\overline{am}$ for any $a\in\mathfrak{A}$ and $m\in\mathsf{M}$;
	\item[iii)] $\overline{m}b=\overline{mb}$ for any $b\in\widetilde{\mathfrak{A}}$ and $m\in\mathsf{M}$.
\end{itemize}
It is clear that $a\overline{m}b= \overline{am}b= a\overline{mb}= \overline{amb}$ for any $a\in\mathfrak{A}$, $b\in\widetilde{\mathfrak{A}}$ and $m\in\mathsf{M}$. By the three above items, we have that $\mathsf{M}/\mathsf{N}$ is an $(\mathfrak{A},\widetilde{\mathfrak{A}})$-bimodule naturally.


\begin{definition}
	Let $\mathfrak{A}$ and $\widetilde{\mathfrak{A}}$ be two algebras, $\mathsf{M}$ and $\widetilde{\mathsf{M}}$ two $(\mathfrak{A},\widetilde{\mathfrak{A}})$-bimodules and $\varphi: \mathsf{M}\longrightarrow\widetilde{\mathsf{M}}$ a linear transformation. We say that $\varphi$ is a {\bf homomorphism of $(\mathfrak{A},\widetilde{\mathfrak{A}})$-bimodules} if $\varphi$ satisfies $\varphi(am)=a\varphi(m)$ and $\varphi(mb)=\varphi(m)b$ for any $a\in\mathfrak{A}$, $b\in\widetilde{\mathfrak{A}}$ and $m\in\mathsf{M}$.
\end{definition}



\subsection{Graded Bimodules over Graded Algebras}
%

Let $\mathfrak{A}$ be an $\mathbb{F}$-algebra and $\mathsf{G}$ a group. A $\mathsf{G}$-grading on $\mathfrak{A}$ is a decomposition $\Gamma: \mathfrak{A}=\bigoplus_{g\in\mathsf{G}} \mathfrak{A}_g$ in $\mathbb{F}$-vector subspaces $\mathfrak{A}_g$'s of $\mathfrak{A}$ that satisfy $\mathfrak{A}_g \mathfrak{A}_h\subseteq \mathfrak{A}_{gh}$ for all $g,h\in\mathsf{G}$.

The group algebra $\mathbb{F}\mathsf{G}$ has a natural $\mathsf{G}$-grading. In general, for any group $\mathsf{G}$, any algebra $\mathfrak{A}$ has a $\mathsf{G}$-grading, called \textit{trivial grading}, defined by $\mathfrak{A}_e=\mathfrak{A}$ and $\mathfrak{A}_g=\{0\}$ for any $g\in\mathsf{G}\setminus\{e\}$, where $e$ is the neutral element of $\mathsf{G}$.

\begin{definition}\label{1.58}
	A $\mathsf{G}$-graded algebra $\mathfrak{A}$ is said to be {\bf $\mathsf{G}$-simple} (or {\bf simple graded}, or {\bf minimal graded}) if $\mathfrak{A}^2\neq\{0\}$ and $\mathfrak{A}$ does not have proper $\mathsf{G}$-graded ideals, i.e. if $I$ is a graded ideal of $\mathfrak{A}$, then either $I=\{0\}$ or $I=\mathfrak{A}$. Moreover, assuming $\mathfrak{A}$ be unitary, $\mathfrak{A}$ is a {\bf $\mathsf{G}$-division} (or {\bf division $\mathsf{G}$-graded}) algebra if all its nonzero homogeneous elements are invertible in $\mathfrak{A}$, i.e. for any $a\in \bigcup_{g\in\mathsf{G}}\mathfrak{A}_g$, $a\neq0$, there exists $a^{-1}\in \mathfrak{A}$ such that $aa^{-1}=a^{-1}a=1$.
\end{definition}

 Note that for any $a\in \bigcup_{g\in\mathsf{G}}\mathfrak{A}_g$, $a\neq0$, its inverse $a^{-1}$ is also homogeneous of degree $\mathsf{deg}(a^{-1})=(\mathsf{deg}(a))^{-1}$. Observe also that if $\mathfrak{A}$ is a simple algebra, thus $\mathfrak{A}$ is $\mathsf{G}$-simple. For more details about graded algebras (rings), we indicated the references \cite{NastOyst04,NastOyst11}.

%

Let us now define a $\mathsf{G}$-grading on an $(\mathfrak{A},\widetilde{\mathfrak{A}})$-bimodule $\mathsf{M}$, where $\mathfrak{A}$ and $\widetilde{\mathfrak{A}}$ are two $\mathsf{G}$-graded algebras and $\mathsf{G}$ is a group.

\begin{definition} 
Let $\mathsf{G}$ be a group, $\mathfrak{A}$ and $\widetilde{\mathfrak{A}}$ two $\mathsf{G}$-graded $\mathbb{F}$-algebras and $\mathsf{M}$ an $(\mathfrak{A},\widetilde{\mathfrak{A}})$-bimodule. A $\mathsf{G}$-grading on $\mathsf{M}$ is a decomposition of $\mathsf{M}$ in a direct sum of $\mathbb{F}$-subspaces $\mathsf{M}_g$ of $\mathsf{M}$, $g\in\mathsf{G}$, satisfying $\mathfrak{A}_g \mathsf{M}_h\subseteq \mathsf{M}_{gh}$ and $\mathsf{M}_h\widetilde{\mathfrak{A}}_{t}\subseteq \mathsf{M}_{ht}$, for any $g,h,t\in\mathsf{G}$. In this case, we say that $\mathsf{M}$ is a {\bf $\mathsf{G}$-graded $(\mathfrak{A},\widetilde{\mathfrak{A}})$-bimodule}.
\end{definition}

Analogously to the case of graded algebras, for any group $\mathsf{G}$, any bimodule $\mathsf{M}$ has a $\mathsf{G}$-grading, called \textit{trivial grading}, defined by $\mathsf{M}_e=\mathsf{M}$ and $\mathsf{M}_g=\{0\}$ for any $g\in\mathsf{G}\setminus\{e\}$, where $e$ is the neutral element of $\mathsf{G}$. A special case in the study of the $\mathsf{G}$-graded $(\mathfrak{A},\widetilde{\mathfrak{A}})$-bimodules occurs when $\mathfrak{A}=\widetilde{\mathfrak{A}}$. Such bimodules are called \textbf{$\mathsf{G}$-graded $\mathfrak{A}$-bimodules}. 

A subbimodule $\mathsf{N}$ of $\mathsf{M}$ is said to be a \textbf{graded subbimodule} if $\mathsf{N}=\bigoplus_{g\in\mathsf{G}}(\mathsf{N}\cap\mathsf{M}_g)$. Observe that a subbimodule of a graded bimodule is graded iff it can be generated as a bimodule by its homogeneous elements.

%

\begin{definition}\label{1.87}
Let $\mathsf{M}$ be a $\mathsf{G}$-graded $(\mathfrak{A},\widetilde{\mathfrak{A}})$-bimodule. We say that $\mathsf{M}$ is {\bf irreducible graded} (or \textbf{$\mathsf{G}$-irreducible}) if $\mathfrak{A} \mathsf{M}\widetilde{\mathfrak{A}}\neq\{0\}$, and $\mathsf{M}$ does not have proper graded subbimodules. This means that $\mathsf{M}$ is an irreducible $\mathsf{G}$-graded $(\mathfrak{A},\widetilde{\mathfrak{A}})$-bimodule iff $\mathfrak{A} \mathsf{M}\widetilde{\mathfrak{A}}\neq\{0\}$, and $\{0\}$ and $\mathsf{M}$ are the only graded subbimodules of $\mathsf{M}$.
\end{definition}

By above definition, the condition $\mathfrak{A}\mathsf{M}\widetilde{\mathfrak{A}}\neq\{0\}$ means that $am\tilde{a}\neq0$ for some homogeneous elements $a\in\mathfrak{A}$, $m\in\mathsf{M}$ and $\tilde{a}\in\widetilde{\mathfrak{A}}$. Obviously, for any irreducible $\mathsf{G}$-graded $(\mathfrak{A},\widetilde{\mathfrak{A}})$-bimodule $\mathsf{M}$, we have $\mathsf{M}=\mathfrak{A} m\widetilde{\mathfrak{A}}$ for any nonzero homogeneous element $m\in\mathsf{M}$. In fact, it is sufficient to see that $\mathsf{N}=\{m\in\mathsf{M}: \mathfrak{A} m\widetilde{\mathfrak{A}}=\{0\}\}$ is a graded subbimodule of $\mathsf{M}$, and to conclude that $\mathsf{N}=\{0\}$, because $\mathfrak{A}\mathsf{M}\widetilde{\mathfrak{A}}\neq\{0\}$ and $\mathfrak{A} \mathsf{N}\widetilde{\mathfrak{A}}=\{0\}$, and so $\mathsf{N}\neq\mathsf{M}$.

Notice that, given a graded subalgebra $I$ of $\mathfrak{A}$ such that $\mathfrak{A} I \mathfrak{A}\neq\{0\}$, $I$ is a (simple) graded two-sided ideal of $\mathfrak{A}$ iff $I$ is a (irreducible) graded $\mathfrak{A}$-bimodule. 

\begin{definition}\label{1.08}
Let $\mathsf{M}$ be a $\mathsf{G}$-graded $(\mathfrak{A},\widetilde{\mathfrak{A}})$-bimodule and $\mathsf{N}$ a proper graded subbimodule of $\mathsf{M}$. We say that $\mathsf{N}$ is a {\bf maximal graded} (or \textbf{$\mathsf{G}$-maximal}) subbimodule of $\mathsf{M}$ if $\mathsf{M}/\mathsf{N}$ is a $\mathsf{G}$-irreducible $(\mathfrak{A},\widetilde{\mathfrak{A}})$-bimodule. In other words, $\mathsf{N}$ is $\mathsf{G}$-maximal in $\mathsf{M}$ iff $\mathsf{N}$ is proper and $\mathsf{N}+ {_{\mathfrak{A}}}\{m\}_{\widetilde{\mathfrak{A}}}=\mathsf{M}$ for any $m\in\mathsf{M}\setminus\mathsf{N}$.
\end{definition}

It is important to note that, since any graded subbimodule $\mathsf{N}$ of $\mathsf{M}$ is still a left (and also right) graded submodule of $\mathsf{M}$, any $\mathsf{G}$-maximal (resp. $\mathsf{G}$-irreducible) subbimodule $\mathsf{N}$ of $\mathsf{M}$ is either a left $\mathsf{G}$-maximal (resp. left $\mathsf{G}$-irreducible) of $\mathsf{M}$ or is contained in (resp. contains) some.

Now, given a graded subbimodule $\mathsf{N}$ of a $\mathsf{G}$-graded  $(\mathfrak{A},\widetilde{\mathfrak{A}})$-bimodule $\mathsf{M}$, we have that the quotient $(\mathfrak{A},\widetilde{\mathfrak{A}})$-bimodule $\mathsf{M}/\mathsf{N}$ is naturally a $\mathsf{G}$-graded $(\mathfrak{A},\widetilde{\mathfrak{A}})$-bimodule. In fact, since $\mathsf{M}=\bigoplus_{g\in\mathsf{G}}\mathsf{M}_g$ and $\mathsf{N}=\bigoplus_{g\in\mathsf{G}}(\mathsf{N}\cap\mathsf{M}_g)$, we have the quotient space $\mathsf{M}_g/(\mathsf{N}\cap\mathsf{M}_g)$ is well defined, for any $g\in\mathsf{G}$. It is easy to see that
\begin{equation}\nonumber
\displaystyle\frac{\mathsf{M}}{\mathsf{N}}=\bigoplus_{g\in\mathsf{G}} \frac{\mathsf{M}_g}{\mathsf{N}\cap\mathsf{M}_g} \ ,
\end{equation}
and thus, $\mathsf{M}/\mathsf{N}$ is a $\mathsf{G}$-graded $(\mathfrak{A},\widetilde{\mathfrak{A}})$-bimodule, called {\bf graded quotient $(\mathfrak{A},\widetilde{\mathfrak{A}})$-bimodule}.

 
Let us now define \textit{homogeneous homomorphisms} and \textit{graded homomorphisms} of $(\mathfrak{A},\widetilde{\mathfrak{A}})$-bimodules.

	\begin{definition}
		Let $\mathsf{G}$ be a group, $\mathfrak{A}$ and $\widetilde{\mathfrak{A}}$ two $\mathsf{G}$-graded algebras, $\mathsf{M}$ and $\widetilde{\mathsf{M}}$ two $\mathsf{G}$-graded $(\mathfrak{A},\widetilde{\mathfrak{A}})$-bimodules, and $\varphi: \mathsf{M}\rightarrow\widetilde{\mathsf{M}}$ a homomorphism of $(\mathfrak{A},\widetilde{\mathfrak{A}})$-bimodules. We say that $\varphi$ is a {\bf homogeneous homomorphism} of degree $h_0\in\mathsf{G}$ if $\varphi(\mathsf{M}_g)\subseteq \left( \widetilde{\mathsf{M}}_{gh_0}\bigcap \widetilde{\mathsf{M}}_{h_0g} \right)$ for any $g\in\mathsf{G}$. A finite sum of homogeneous homomorphisms of $(\mathfrak{A},\widetilde{\mathfrak{A}})$-bimodules is called a {\bf graded homomorphism of $(\mathfrak{A},\widetilde{\mathfrak{A}})$-bimodules}.
	\end{definition}

Observe that if $\varphi$ is a homogeneous homomorphism of degree $h_0$, thus either $h_0$ belongs to center of $\{g\in\mathsf{G} : \varphi{(\mathsf{M}_g)}\neq\{0\}\}$ in $\mathsf{G}$, and hence, $\widetilde{\mathsf{M}}_{gh_0} = \widetilde{\mathsf{M}}_{h_0g}$ for any $g\in\mathsf{G}$, or $\varphi(\mathsf{M}_g)=\{0\}$ when $gh_0\neq h_0g$. 

Notice that any homogeneous homomorphism of $\mathsf{G}$-graded is also a graded homomorphism. Not always the kernel or image of a graded homomorphism is a graded subbimodule, but kernel and image of any  homogeneous homomorphism of graded bimodules are graded subbimodules. We write $\mathsf{M}\cong_\mathsf{G}\widetilde{\mathsf{M}}$ when two $\mathsf{G}$-graded $(\mathfrak{A},\widetilde{\mathfrak{A}})$-bimodules $\mathsf{M}$ and $\widetilde{\mathsf{M}}$ are homogeneously isomorphic, i.e. there exists a homogeneous isomorphism $\psi:\mathsf{M}\rightarrow\widetilde{\mathsf{M}}$.

Next, let us talk a little bit about chain conditions for graded bimodules.

\begin{definition}
Let $\mathsf{G}$ be a group, $\mathfrak{A}$ and $\widetilde{\mathfrak{A}}$ two $\mathsf{G}$-graded algebras and $\mathsf{M}$ a $\mathsf{G}$-graded $(\mathfrak{A},\widetilde{\mathfrak{A}})$-bimodule. We say that $\mathsf{M}$ is {\bf weak $\mathsf{G}$-noetherian} (resp. {\bf weak $\mathsf{G}$-artinian}) if it satisfies the ascending (resp. descending) chain condition (ACC) (resp. (DCC)) for graded subbimodules.
\end{definition}

When $\mathsf{M}$ is a $\mathsf{G}$-graded $(\mathfrak{A},\widetilde{\mathfrak{A}})$-bimodule which is weak $\mathsf{G}$-noetherian and weak $\mathsf{G}$-artinian at the same time, then we say that ``\textit{$\mathsf{M}$ satisfies both weak chain conditions}" instead of ``\textit{$\mathsf{M}$ satisfies both chain conditions for graded subbimodules}".

Any finite dimensional $\mathsf{G}$-graded $(\mathfrak{A},\widetilde{\mathfrak{A}})$-bimodules and any irreducible $\mathsf{G}$-graded $(\mathfrak{A},\widetilde{\mathfrak{A}})$-bimodules are examples of graded bimodules which satisfy both weak chain conditions.

\begin{remark}\label{1.69}
Let $\mathsf{M}$ be a $\mathsf{G}$-graded $(\mathfrak{A},\widetilde{\mathfrak{A}})$-bimodule, where $\mathsf{G}$ is a group, and $\mathfrak{A}$ and $\widetilde{\mathfrak{A}}$ are two $\mathsf{G}$-graded algebras. Since any graded subbimodule of $\mathsf{M}$ is also a left graded submodule, we have naturally that if $\mathsf{M}$ is left $\mathsf{G}$-noetherian (resp. $\mathsf{G}$-artinian), thus $\mathsf{M}$ is weak $\mathsf{G}$-noetherian (resp. weak $\mathsf{G}$-artinian). The ``right" case is analogous.
\end{remark} 

\begin{definition}
A \textbf{weak graded composition series} for an $(\mathfrak{A},\widetilde{\mathfrak{A}})$-bimodule $\mathsf{M}$ is a finite sequence $\mathsf{N}_{0}, \mathsf{N}_{1}, \dots, \mathsf{N}_{r}$ of graded subbimodules of $\mathsf{M}$ satisfying
\begin{equation}\nonumber
\{0\}=\mathsf{N}_{0}\subsetneq \mathsf{N}_{1}\subsetneq \cdots \subsetneq \mathsf{N}_{r}=M ,
\end{equation}
where each $\mathsf{N}_k$ is a maximal graded subbimodule of $\mathsf{N}_{k+1}$, i.e. $\displaystyle\frac{\mathsf{N}_{k+1}}{\mathsf{N}_{k}}$ is an irreducible graded $(\mathfrak{A},\widetilde{\mathfrak{A}})$-bimodule for each $k$. In this case, we say that $\mathsf{M}$ has a weak graded composition series with length $r$, and we denote by $l(\mathsf{M})$ the smallest length of a weak graded composition series of $\mathsf{M}$.
\end{definition}

Obviously, any $\mathsf{G}$-graded $(\mathfrak{A},\widetilde{\mathfrak{A}})$-bimodule which has a graded composition series is finitely generated (as a graded bimodule).


\subsection{Matrix over a twisted group algebra}

Here, let us define an important object in our study: the twisted group algebras. As previously, $\mathsf{G}$ denotes a group and $e\in\mathsf{G}$ its neutral element, $\mathbb{F}$ a field, and $\mathbb{F}^*=\mathbb{F}\setminus\{0\}$.

\begin{definition}\label{1.05}
	The mapping $\sigma: \mathsf{G}\times \mathsf{G} \longrightarrow \mathbb{F}^*$ which satisfies 
	\begin{equation}\nonumber
		\sigma(x,y)\sigma(xy,z)=\sigma(x,yz)\sigma(y,z) \mbox{ for all } x,y,z\in \mathsf{G}
	\end{equation}
	is called a {\bf $2$-cocycle} on $\mathsf{G}$ with values in $\mathbb{F}^*$. The set of all $2$-cocycles from $\mathsf{G}$ into $\mathbb{F}^*$ is denoted by $\mathsf{Z}^2(\mathsf{G}, \mathbb{F}^*)$.
\end{definition}


\begin{remark}\label{1.07}
	Let $\sigma: \mathsf{G}\times \mathsf{G}\longrightarrow \mathbb{F}^*$ be a $2$-cocycle on $\mathsf{G}$. To prove that $\sigma(x,e)=\sigma(e,y)$ for any $x,y\in \mathsf{G}$, it is enough to put $y=e$ in Definition \ref{1.05}. From this, it follows that $\sigma(x,e)=\sigma(e,x)=\sigma(e,e)$ for any $x\in \mathsf{G}$. Again in Definition \ref{1.05}, if we replace $y=x^{-1}$ and $z=x$, we can prove that $\sigma(x,x^{-1})=\sigma(x^{-1},x)$, for any $x\in \mathsf{G}$.
\end{remark}

Now, consider the group algebra $\mathbb{F}\mathsf{G}$. Observe that $\mathbb{F}\mathsf{G}$ is an associative algebra with unity; if $\mathsf{G}$ is commutative (resp. finite), then $\mathbb{F}\mathsf{G}$ is a commutative (resp. finite dimensional) algebra. Also, $\mathbb{F}\mathsf{G}$ has a natural $\mathsf{G}$-grading. Let us consider a more general situation.

\begin{definition}
Let $\mathsf{G}$ be a group, $\mathbb{F}$ a field, and $\sigma: \mathsf{G}\times \mathsf{G}\longrightarrow \mathbb{F}^*$ a $2$-cocycle on $\mathsf{G}$. Consider the $\mathbb{F}$-vector space 
	\begin{equation}\nonumber
	\mathbb{F}^\sigma[\mathsf{G}]=\left\{ \sum_{g\in \mathsf{G}} \alpha_g \eta_g: \alpha_g\in\mathbb{F}, g \in \mathsf{G} \right\} \ ,
	\end{equation}
where the set $\{\eta_g\}_{g\in\mathsf{G}}$ is linearly independent over $\mathbb{F}$. Consider on $\mathbb{F}^\sigma[\mathsf{G}]$ the multiplication which extends by linearity the product $\eta_g\eta_h=\sigma(g,h)\eta_{gh}$, $g, h\in \mathsf{G}$. With this multiplication, $\mathbb{F}^\sigma[\mathsf{G}]$ is an algebra, called a {\bf twisted group algebra}.
\end{definition}

Observe that the equality in Definition \ref{1.05} ensures that $\mathbb{F}^\sigma[\mathsf{G}]$ is an associative algebra. Obviously, if $\sigma$ is the trivial $2$-cocycle on $\mathsf{G}$, i.e. $\sigma(g,h)=1$ for any $g,h\in\mathsf{G}$, thus $\mathbb{F}^\sigma[\mathsf{G}]=\mathbb{F} \mathsf{G}$. Furthermore, we have that $\mathfrak{A}=\mathbb{F}^\sigma[\mathsf{G}]$ is $\mathsf{G}$-graded with the natural grading defined by $\mathfrak{A}_g = \mathsf{span}_\mathbb{F}\{\eta_g\}$, for each $g\in\mathsf{G}$.

\begin{remark}
	Note that $\mathbb{F}^\sigma[\mathsf{G}]$ is a unitary algebra, where its unity is given by $\sigma(e,e)^{-1} \eta_e$, since $\sigma(e,e)=\sigma(e,g)=\sigma(g,e)$ for any $g\in\mathsf{G}$ (by Remark \ref{1.07}). Moreover, $\mathbb{F}^{\sigma}[\mathsf{G}]$ is a division graded algebra, i.e. every nonzero homogeneous element of $\mathbb{F}^{\sigma}[\mathsf{G}]$ has multiplicative inverse in $\mathbb{F}^{\sigma}[\mathsf{H}]$. Note that $\mathsf{dim}(\mathbb{F}^{\sigma}[\mathsf{G}])<\infty$ iff $\mathsf{G}$ is a finite group.
\end{remark}

Now, let us present some results which will be our tools in the last part of this section.


	Consider the algebra $\mathfrak{B}=M_n(\mathbb{F}^\sigma[\mathsf{H}])$ of all $n\times n$ matrices over the twisted group algebra $\mathbb{F}^\sigma[\mathsf{H}]$, where $\mathsf{H}$ is a subgroup of a group $\mathsf{G}$ and $\sigma\in\mathsf{Z}^2(\mathsf{H},\mathbb{F}^*)$. We have that $|\mathsf{H}|<\infty$ implies $\mathsf{dim}(\mathfrak{B})=n^2|\mathsf{H}|<\infty$. Fix an arbitrary $k$-tuple $\xi=(g_1,\dots , g_k)\in G^k$. The vector subspaces defined by $\mathfrak{B}_g=\mathsf{span}_\mathbb{F}\{E_{ij}\eta_h \in\mathfrak{B}: g=g_i^{-1} hg_j \}$, $g\in\mathsf{G}$, define a $\mathsf{G}$-grading on $\mathfrak{B}$. This $\mathsf{G}$-grading is called {\bf canonical elementary grading} defined by $\xi$.

\begin{theorem}[Theorem 3, \cite{BahtSehgZaic08}] \label{teoBahtSehgZaic}
	Let $\mathfrak{A}$ be a finite dimensional $\mathsf{G}$-graded algebra over an algebraically closed field $\mathbb{F}$. Suppose that either $\mathsf{char}(\mathbb{F}) = 0$ or $\mathsf{char} (\mathbb{F})$ is coprime with the order of each finite subgroup of $\mathsf{G}$. Then $\mathfrak{A}$ is a graded simple algebra iff $\mathfrak{A}$ is isomorphic to the tensor product  $M_k (\mathbb{F})\otimes\mathbb{F}^\sigma [\mathsf{H}] \cong M_k (\mathbb{F}^\sigma [\mathsf{H}])$, where $\mathsf{H}$ is a finite subgroup of $\mathsf{G}$ and $\sigma\in\mathsf{Z}^2(\mathsf{H},\mathbb{F}^*)$. The $\mathsf{G}$-grading  on $M_k (\mathbb{F}^\sigma [\mathsf{H}])$ is a canonical elementary grading defined by a $k$-tuple $(g_1,\dots , g_k )\in \mathsf{G}^k$.
\end{theorem}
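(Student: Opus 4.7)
The plan is to prove both directions. The converse (if) direction is the easier one: assuming $\mathfrak{A}=M_k(\mathbb{F}^\sigma[\mathsf{H}])$ with a canonical elementary grading, I would show that any nonzero graded two-sided ideal $I$ must coincide with $\mathfrak{A}$. Pick a nonzero homogeneous element $x\in I$. Because $\mathbb{F}^\sigma[\mathsf{H}]$ is a graded division algebra, multiplying $x$ on the left and right by suitable matrix units $E_{ij}\eta_h$ (which are homogeneous and invertible in the appropriate sense) produces every basic element $E_{ij}\eta_h$, so $I=\mathfrak{A}$. Nondegeneracy ($\mathfrak{A}^2\neq\{0\}$) is clear.

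The forward direction is the substantial one; the plan is a graded Wedderburn--Artin argument. First, since $\mathfrak{A}$ is finite dimensional, pick a minimal graded left ideal $\mathsf{L}\subseteq\mathfrak{A}$; minimality and the condition $\mathfrak{A}^2\neq\{0\}$ make $\mathsf{L}$ a $\mathsf{G}$-irreducible graded left $\mathfrak{A}$-module. Form the graded endomorphism algebra
\begin{equation}\nonumber
\mathsf{D}=\mathsf{End}_{\mathfrak{A}}^{\mathsf{G}}(\mathsf{L})^{\mathsf{op}},
\end{equation}
with its natural $\mathsf{G}$-grading by homogeneous endomorphisms. A graded version of Schur's Lemma (valid because nonzero homogeneous endomorphisms from an irreducible to itself are bijective) shows that every nonzero homogeneous element of $\mathsf{D}$ is invertible, i.e.\ $\mathsf{D}$ is a graded division algebra. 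Set $\mathsf{H}=\mathsf{supp}(\mathsf{D})=\{g\in\mathsf{G}:\mathsf{D}_g\neq\{0\}\}$; invertibility forces $\mathsf{H}$ to be a subgroup of $\mathsf{G}$, and finite dimensionality forces $\mathsf{H}$ to be finite.

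Next, I would identify $\mathsf{D}$ with a twisted group algebra. The identity component $\mathsf{D}_e$ is a finite-dimensional division $\mathbb{F}$-algebra, and since $\mathbb{F}$ is algebraically closed we get $\mathsf{D}_e=\mathbb{F}$. For each $h\in\mathsf{H}$ the relation $\mathsf{D}_h\mathsf{D}_{h^{-1}}\subseteq\mathsf{D}_e=\mathbb{F}$ together with the graded division property gives $\mathsf{dim}_{\mathbb{F}}\mathsf{D}_h=1$. Choosing a nonzero $\eta_h\in\mathsf{D}_h$ for each $h$, the products $\eta_h\eta_{h'}=\sigma(h,h')\eta_{hh'}$ define a map $\sigma:\mathsf{H}\times\mathsf{H}\to\mathbb{F}^*$; associativity in $\mathsf{D}$ translates into the $2$-cocycle identity, giving $\sigma\in\mathsf{Z}^2(\mathsf{H},\mathbb{F}^*)$ and a homogeneous isomorphism $\mathsf{D}\cong\mathbb{F}^\sigma[\mathsf{H}]$.

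To recover the matrix structure, I would use a graded Jacobson density argument: because $\mathsf{L}$ is irreducible, $\mathfrak{A}$ acts faithfully on $\mathsf{L}$ as graded $\mathsf{D}$-linear operators, and dimension counting (both $\mathfrak{A}$ and $\mathsf{L}$ are finite dimensional) yields $\mathfrak{A}\cong\mathsf{End}_{\mathsf{D}}^{\mathsf{G}}(\mathsf{L})$. Since $\mathsf{L}$ is a finitely generated graded free module over the graded division algebra $\mathsf{D}$, it has a homogeneous $\mathsf{D}$-basis whose degrees form a tuple $(g_1,\dots,g_k)\in\mathsf{G}^k$, and writing endomorphisms in this basis realises $\mathfrak{A}$ as $M_k(\mathbb{F}^\sigma[\mathsf{H}])$ with exactly the canonical elementary grading defined by $(g_1,\dots,g_k)$.

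The main obstacle will be making the graded analogues of Schur's lemma, Jacobson density, and the freeness of modules over a graded division algebra completely rigorous while tracking the gradings; in particular, one has to verify that homogeneous generators can always be chosen so that the resulting grading on $M_k(\mathbb{F}^\sigma[\mathsf{H}])$ is elementary, and that the hypothesis on $\mathsf{char}(\mathbb{F})$ (together with algebraic closure) is what rules out pathological finite subgroups of $\mathsf{G}$ appearing as support and guarantees $\mathsf{D}_e=\mathbb{F}$. All other steps are bookkeeping around this graded Wedderburn skeleton.
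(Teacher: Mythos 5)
This theorem is not proved in the paper: it is quoted verbatim from Bahturin--Sehgal--Zaicev \cite{BahtSehgZaic08}, so there is no internal proof to compare against. Your sketch is nonetheless the standard ``graded Wedderburn--Artin'' skeleton that underlies the cited result, and it is correct in its outline: take a minimal graded left ideal $\mathsf{L}$, use that its left annihilator and the annihilator of $\mathsf{L}$ in $\mathfrak{A}$ are graded two-sided ideals (hence zero by simplicity and $\mathfrak{A}^2\neq\{0\}$) to make $\mathsf{L}$ an irreducible faithful graded module, pass to the graded endomorphism algebra via graded Schur, show it is a graded division algebra with identity component $\mathbb{F}$ and one-dimensional homogeneous components (so $\mathsf{D}\cong\mathbb{F}^\sigma[\mathsf{H}]$), and finish with graded density and a homogeneous $\mathsf{D}$-basis to get the elementary grading. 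All of that is sound.

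The one substantive inaccuracy is your explanation of the role of the characteristic hypothesis. You say it ``rules out pathological finite subgroups of $\mathsf{G}$ appearing as support and guarantees $\mathsf{D}_e=\mathbb{F}$''. Neither claim is right: $\mathsf{D}_e=\mathbb{F}$ already follows from algebraic closure and finite dimensionality alone, with no characteristic assumption; and the support of a graded division algebra is a finite subgroup in any characteristic, again without the coprimality condition. In fact, every step of the argument you wrote works in arbitrary characteristic, and the ``if'' direction is also characteristic-free since $\mathbb{F}^\sigma[\mathsf{H}]$ is always a graded division algebra. The hypothesis on $\mathsf{char}(\mathbb{F})$ is carried along in the statement because it is used elsewhere in \cite{BahtSehgZaic08} (notably for semisimplicity of group algebras), not because this particular theorem needs it; your proof sketch would be cleaner if it simply did not appeal to that hypothesis rather than inventing a role for it.
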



\subsection{Group Characters}

In this subsection, let us recall some definitions and properties of \textit{Group Characters}, which will be important for the development of this work. Although the results in this section are well-known, we add them here so that this work is self-contained. Here, let us denote by $\mathsf{G}$ an arbitrary finite multiplicative group with identity element $1$, $\mathbb{F}$ a field, and $GL_n(\mathbb{F})$ the group of invertible $n\times n$ matrices over $\mathbb{F}$.
 
 \begin{definition}
 A \textbf{representation of $\mathsf{G}$ of degree $n$} is a homomorphism of groups $T$ from $\mathsf{G}$ to $GL_n(\mathbb{F})$. Two representations $T$ and $T'$ are {\bf equivalent} if they have the same degree, namely $n$, and if there exists a fixed $S$ in $GL_n(\mathbb{F})$  such that $T'(g)=ST(g)S^{-1}$, for any $g\in \mathsf{G}$.
 \end{definition}

Let $T:\mathsf{G}\rightarrow GL_n(\mathbb{F})$ be a representation of a group $\mathsf{G}$. We say that $T$ is {\bf reducible} if there exist representations $T_1:\mathsf{G}\rightarrow GL_{n_1}(\mathbb{F})$ and $T_2:\mathsf{G}\rightarrow GL_{n_2}(\mathbb{F})$ of $\mathsf{G}$, with $n=n_1+n_2$, such that 
	\begin{equation}\nonumber
	T(g) \mbox{ and }\begin{bmatrix} T_1(g) & V(g)\\
		0 & T_2(g)
	\end{bmatrix}
\end{equation}
are equivalent, for any $g\in\mathsf{G}$, where $V(g)$ is a matrix over $\mathbb{F}$ of order $n_1\times n_2$ for each $g\in\mathsf{G}$; otherwise, then $T$ is an {\bf irreducible representation}. For more details, see \cite{CurtRein62,Isaa76,JameLieb01,Serr77}.

\begin{theorem}[Theorem 10.8 (Maschke), p.41, \cite{CurtRein62}]\label{1.59}
Let $\mathsf{G}$ be a finite group and $T: \mathsf{G} \rightarrow GL_n(\mathbb{F})$ a representation of $\mathsf{G}$. If $\mathsf{char}(\mathbb{F}) \nmid |\mathsf{G}|$, then there exist irreducible representations $T_1,\dots, T_n$ (not necessarily non-equivalent) of $\mathsf{G}$ such that $T=T_1+\cdots +T_n$.
\end{theorem}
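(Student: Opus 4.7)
The plan is to prove Theorem \ref{1.59} by strong induction on the degree $n$. For $n=1$, any representation is automatically irreducible (there is no proper nonzero invariant subspace of a $1$-dimensional space), so the conclusion holds with a single summand $T_1 = T$.

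For the inductive step, if $T$ is irreducible there is nothing to prove. Otherwise, by the definition of reducibility there is some $S \in GL_n(\mathbb{F})$ such that, replacing $T$ by the equivalent representation $g \mapsto S T(g) S^{-1}$, the matrices $T(g)$ are block upper-triangular with diagonal blocks $T_1(g), T_2(g)$ of degrees $n_1, n_2 < n$, and upper-right block $V(g)$. Equivalently, identifying $\mathbb{F}^n$ with column vectors, the subspace $W_1 = \mathbb{F}^{n_1} \oplus 0$ is $\mathsf{G}$-invariant under $T$. The heart of the argument is to promote this to an invariant direct-sum decomposition, i.e., to find an invariant complement $W_2'$ so that in a suitable basis $V(g) = 0$ for every $g$.

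To construct $W_2'$, pick any vector-space complement $W_2$ of $W_1$ (for instance $W_2 = 0 \oplus \mathbb{F}^{n_2}$), and let $\pi \colon \mathbb{F}^n \to \mathbb{F}^n$ be the projection onto $W_1$ along $W_2$. Because $\mathsf{char}(\mathbb{F}) \nmid |\mathsf{G}|$, the scalar $|\mathsf{G}|$ is invertible in $\mathbb{F}$, and we can define the averaged projection
\begin{equation}\nonumber
\pi' \;=\; \frac{1}{|\mathsf{G}|} \sum_{g \in \mathsf{G}} T(g)\, \pi\, T(g)^{-1}.
\end{equation}
I would then verify three properties: (i) $\pi'(\mathbb{F}^n) \subseteq W_1$, since $T(g)^{-1}W_1 \subseteq W_1$, $\pi$ lands in $W_1$, and $T(g)$ preserves $W_1$; (ii) $\pi'|_{W_1} = \mathrm{id}_{W_1}$, because for $w \in W_1$ the $g$-th summand is $T(g)\,\pi(T(g)^{-1}w) = T(g) T(g)^{-1} w = w$, and averaging $|\mathsf{G}|$ copies of $w$ with the $1/|\mathsf{G}|$ factor gives $w$; (iii) $T(h)\pi' = \pi' T(h)$ for every $h \in \mathsf{G}$, by the standard reindexing $g \mapsto hg$ inside the sum. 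Properties (i) and (ii) say $\pi'$ is an idempotent with image exactly $W_1$; property (iii) says $\pi'$ is $\mathsf{G}$-equivariant. Hence $W_2' := \ker \pi'$ is a $\mathsf{G}$-invariant complement of $W_1$.

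Choosing a basis adapted to $\mathbb{F}^n = W_1 \oplus W_2'$ yields a change of basis $S' \in GL_n(\mathbb{F})$ for which $S' T(g) (S')^{-1} = \mathrm{diag}(T_1(g), T_2''(g))$ for some representation $T_2''$ of degree $n_2$ equivalent to $T_2$. Applying the induction hypothesis to $T_1$ and $T_2''$, both of degree strictly less than $n$, decomposes each as a direct sum of irreducibles; concatenating gives the desired decomposition of $T$. The main obstacle is step (iii): one must handle the averaging cleanly so that the characteristic hypothesis is used exactly once (to invert $|\mathsf{G}|$) and the resulting $\pi'$ is both idempotent and $\mathsf{G}$-equivariant. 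Once that is in place the rest is a bookkeeping induction.
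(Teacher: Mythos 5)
Your proof is correct and is the standard Maschke averaging argument: use the non-degeneracy of $|\mathsf{G}|^{-1}$ to average an arbitrary projection onto the invariant subspace into a $\mathsf{G}$-equivariant idempotent, take its kernel as the invariant complement, and induct on degree. The paper states this theorem without proof, citing Curtis--Reiner (Theorem 10.8), and your argument is essentially the proof that appears there, so no further comparison is needed.
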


  

  \begin{definition}
 Let $\mathsf{G}$ be a group, and $T: \mathsf{G}\rightarrow GL_n(\mathbb{F})$ a representation of $\mathsf{G}$. The {\bf character} of $T$ is a map $\zeta:\mathsf{G}\rightarrow \mathbb{F}$ defined by $\zeta(g)=\mathsf{tr}(T(g))$ for any $g\in\mathsf{G}$, where $\mathsf{tr}(T(g))$ is the trace of matrix $T(g)$. We say that $\zeta$ is an \textbf{irreducible character} when $T$ is an irreducible representation. The set of all the irreducible characters of $\mathsf{G}$ is called {\bf dual} of $\mathsf{G}$, and denoted by $\widehat{\mathsf{G}}$.
 \end{definition}

\begin{proposition}[Corollary 30.14, p.214, and Theorem 27.22, p.186, \cite{CurtRein62}]\label{1.60}
Let $T$ and $T_1$ be two representations of $\mathsf{G}$ over $\mathbb{F}$ with characters $\zeta$ and $\zeta_1$, respectively. If 
 $\mathsf{char}(\mathbb{F}) \nmid |\mathsf{G}|$, then $T$ and $T_1$ are equivalent iff $\zeta=\zeta_1$. In addition, if $\mathbb{F}$ is an algebraically closed field, then the number of distinct irreducible characters of $\mathsf{G}$ is finite.
\end{proposition}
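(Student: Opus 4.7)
The plan is to treat the two assertions of the proposition separately, using Maschke's theorem (Theorem \ref{1.59}) as the structural input together with Schur's lemma.

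The easy direction of the character characterization is immediate: if $T_1(g)=ST(g)S^{-1}$, then $\zeta_1(g)=\mathsf{tr}(ST(g)S^{-1})=\mathsf{tr}(T(g))=\zeta(g)$ by cyclicity of trace, independent of the hypothesis on $\mathsf{char}(\mathbb{F})$. For the converse, I would apply Maschke's theorem to decompose
\[
T \sim \bigoplus_{i} m_i \tau_i, \qquad T_1 \sim \bigoplus_{i} m_i' \tau_i,
\]
for a fixed family $\{\tau_i\}$ of pairwise non-equivalent irreducible representations of $\mathsf{G}$, with characters $\chi_i=\mathsf{tr}\circ \tau_i$, so that $\zeta=\sum_i m_i\chi_i$ and $\zeta_1=\sum_i m_i'\chi_i$. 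The equivalence $T\sim T_1$ will then follow from uniqueness of the irreducible decomposition, provided one shows that distinct irreducible characters are linearly independent, so that $\zeta=\zeta_1$ forces $m_i=m_i'$ for every $i$.

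The main obstacle is precisely this linear independence. I would prove it by introducing the $\mathbb{F}$-bilinear pairing $\langle f_1,f_2\rangle=|\mathsf{G}|^{-1}\sum_{g\in\mathsf{G}} f_1(g)f_2(g^{-1})$ on class functions, which is well-defined since $\mathsf{char}(\mathbb{F})\nmid |\mathsf{G}|$, and establishing the orthogonality relation $\langle \chi_i,\chi_j\rangle=\delta_{ij}$ for irreducible characters. The standard route is to construct intertwining operators between $\tau_i$ and $\tau_j$ by averaging arbitrary linear maps over $\mathsf{G}$ (an averaging that again requires dividing by $|\mathsf{G}|$, hence the coprimality hypothesis), and to apply Schur's lemma, which forces the averages to be zero when $i\neq j$ and scalar when $i=j$. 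Orthogonality then yields linear independence and closes the first assertion.

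For the finiteness statement, assume $\mathbb{F}$ is algebraically closed. Since $\mathsf{char}(\mathbb{F})\nmid|\mathsf{G}|$, the group algebra $\mathbb{F}\mathsf{G}$ is semisimple, and Wedderburn's theorem combined with algebraic closure of $\mathbb{F}$ gives $\mathbb{F}\mathsf{G}\cong \prod_{i=1}^{s} M_{n_i}(\mathbb{F})$, where each simple factor corresponds to a unique equivalence class of irreducible representations. By the first assertion, $\widehat{\mathsf{G}}$ is in bijection with the set of simple factors, so $|\widehat{\mathsf{G}}|=s=\dim_{\mathbb{F}} Z(\mathbb{F}\mathsf{G})$, which equals the finite number of conjugacy classes of $\mathsf{G}$.
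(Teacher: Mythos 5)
The paper does not actually prove this proposition: it is stated as a citation to Curtis \& Reiner, so there is no internal argument against which to compare yours. Your argument is the standard textbook one (Maschke, Schur's lemma, orthogonality, Wedderburn), and it is correct in the setting the paper actually uses, namely $\mathbb{F}$ algebraically closed of characteristic zero.

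However, two steps in your sketch tacitly require more than the stated hypothesis $\mathsf{char}(\mathbb{F})\nmid|\mathsf{G}|$. First, obtaining $\langle\chi_i,\chi_j\rangle=\delta_{ij}$ from averaging invokes Schur's lemma in the strong form ``every self-intertwiner is a scalar,'' which needs $\mathbb{F}$ to be a splitting field (e.g.\ algebraically closed). Over a general $\mathbb{F}$ with $\mathsf{char}(\mathbb{F})\nmid|\mathsf{G}|$ the averaging argument only yields $\langle\chi_i,\chi_i\rangle=\bigl(\dim_{\mathbb{F}}\mathrm{End}_{\mathbb{F}\mathsf{G}}(V_i)\bigr)\cdot 1_{\mathbb{F}}$, which can vanish in positive characteristic: already for $\mathsf{G}=\mathbb{Z}/3$ and $\mathbb{F}=\mathbb{F}_2$ the nontrivial simple module has endomorphism ring $\mathbb{F}_4$, so $\langle\chi,\chi\rangle=2\cdot 1_{\mathbb{F}_2}=0$. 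Second, even with $\mathbb{F}$-linear independence of the irreducible characters, the identity $\sum_i(m_i-m_i')\chi_i=0$ only forces $m_i\equiv m_i'\pmod{\mathsf{char}(\mathbb{F})}$ rather than $m_i=m_i'$ as integers, so the claimed ``iff'' genuinely requires $\mathsf{char}(\mathbb{F})=0$ and not merely $\mathsf{char}(\mathbb{F})\nmid|\mathsf{G}|$ (over $\overline{\mathbb{F}}_2$ with $\mathsf{G}=\mathbb{Z}/3$, the representations $\rho_0^{\oplus 2}$ and $\rho_1^{\oplus 2}$ have the same, identically zero, character yet are inequivalent). Neither gap affects the paper, since all of its applications of Proposition~\ref{1.60} take $\mathbb{F}$ algebraically closed with $\mathsf{char}(\mathbb{F})=0$, but your sketch should state those hypotheses where it uses them rather than only $\mathsf{char}(\mathbb{F})\nmid|\mathsf{G}|$.
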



\begin{proposition}[Orthogonality Relations, p.219-221,  \cite{CurtRein62}]\label{1.62}
	Let $\mathbb{F}$ be an algebraically closed field such that $\mathsf{char}(\mathbb{F})\nmid|\mathsf{G}|$, $\mathsf{G}$ a finite group, and $\zeta_1,\dots, \zeta_s$ all the distinct characters of $\mathsf{G}$. Then
\begin{enumerate}
\item[i)] $\sum_{i=1}^s (\zeta_i(1))^2=|\mathsf{G}|$;
\item[ii)] If $\zeta_i$'s are irreducible, then $\sum_{g\in\mathsf{G}} \zeta_i(g)\zeta_j(g^{-1})=\delta_{ij}|\mathsf{G}|$, where $\delta_{ij}$ is the Kronecker delta;
\item[iii)] For each $i=1,\dots,s$, the character $\zeta_i$ is irreducible iff $\sum_{g\in\mathsf{G}} \zeta_i(g)\zeta_i(g^{-1})=|\mathsf{G}|$.
\end{enumerate}
\end{proposition}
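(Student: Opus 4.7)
The plan is to prove all three orthogonality relations uniformly using Schur's Lemma combined with an averaging argument over $\mathsf{G}$; the hypothesis $\mathsf{char}(\mathbb{F}) \nmid |\mathsf{G}|$ guarantees that division by $|\mathsf{G}|$ is legal, and the algebraic closure of $\mathbb{F}$ forces all endomorphisms of an irreducible representation to be scalars. Let $T_1, \dots, T_s$ be irreducible matrix representations of $\mathsf{G}$ over $\mathbb{F}$ affording the characters $\zeta_1, \dots, \zeta_s$, with degrees $n_i = \zeta_i(1)$. For any pair $(i,j)$ and any $n_i \times n_j$ matrix $A$, I form the averaged operator
\begin{equation*}
\Phi_{ij}(A) = \frac{1}{|\mathsf{G}|} \sum_{g \in \mathsf{G}} T_i(g)\, A \, T_j(g^{-1}),
\end{equation*}
so that the substitution $g \mapsto hg$ yields $T_i(h)\,\Phi_{ij}(A) = \Phi_{ij}(A)\, T_j(h)$ for every $h \in \mathsf{G}$. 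Schur's Lemma then forces $\Phi_{ij}(A) = 0$ for $i \ne j$ (as inequivalent irreducibles admit no nonzero intertwiner) and $\Phi_{ii}(A) = \lambda_A\, I_{n_i}$ for some scalar $\lambda_A \in \mathbb{F}$.

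Next, I would specialize to $A = E_{kl}$ to extract orthogonality of matrix coefficients. Taking traces in the $i = j$ case and invoking cyclicity of the trace pin down the scalar as $\lambda_{E_{kl}} = \delta_{kl}/n_i$; reading off the $(x,y)$-entry produces the master identity
\begin{equation*}
\frac{1}{|\mathsf{G}|} \sum_{g \in \mathsf{G}} (T_i(g))_{xk}\, (T_j(g^{-1}))_{ly} \;=\; \frac{\delta_{ij}\,\delta_{kl}\,\delta_{xy}}{n_i}.
\end{equation*}
Expanding each character as a trace, $\zeta_i(g) = \sum_a (T_i(g))_{aa}$, and summing this identity across $x = k = a$ and $l = y = b$ collapses to $\sum_{g \in \mathsf{G}} \zeta_i(g)\, \zeta_j(g^{-1}) = \delta_{ij}\,|\mathsf{G}|$, establishing (ii).

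Finally, for (i) I would apply the regular representation $\rho_{\mathrm{reg}}$, whose character satisfies $\chi_{\mathrm{reg}}(1) = |\mathsf{G}|$ and $\chi_{\mathrm{reg}}(g) = 0$ for $g \ne 1$. By Theorem \ref{1.59} and Proposition \ref{1.60}, $\chi_{\mathrm{reg}} = \sum_i m_i \zeta_i$ with non-negative integer multiplicities; pairing with $\zeta_j$ via (ii) identifies $m_j = \zeta_j(1) = n_j$, so evaluating at the identity gives $|\mathsf{G}| = \chi_{\mathrm{reg}}(1) = \sum_i n_i\, \zeta_i(1) = \sum_{i=1}^s (\zeta_i(1))^2$. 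For (iii), an arbitrary character $\zeta$ of $\mathsf{G}$ again decomposes by Maschke as $\zeta = \sum_i m_i \zeta_i$ with $m_i \in \mathbb{Z}_{\geq 0}$, and bilinearity of (ii) yields $\sum_{g \in \mathsf{G}} \zeta(g)\, \zeta(g^{-1}) = |\mathsf{G}| \sum_{i} m_i^2$; this sum equals $|\mathsf{G}|$ iff exactly one $m_i$ is $1$ and the rest vanish, i.e., iff $\zeta$ is irreducible. The main delicate point throughout is the appeal to Schur's Lemma in the form \emph{endomorphisms of an irreducible representation over an algebraically closed field are scalars}, which is precisely where the algebraic closure hypothesis enters and is not removable; the rest of the argument is bookkeeping on matrix entries and traces.
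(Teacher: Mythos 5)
The paper does not prove this proposition; it is stated as a quotation from Curtis--Reiner (pp.~219--221, \cite{CurtRein62}) and used as a black box (its role in the paper is only to justify the invertibility of the character matrix $[\chi]$ in the proof of Proposition \ref{1.64}). Your proof is correct and follows the standard Schur-averaging route found in that reference: the averaged intertwiner $\Phi_{ij}(A)$ is killed for $i \neq j$ and is scalar for $i = j$ by Schur's Lemma (where algebraic closure is needed), the trace computation pins down the scalar (and, as a by-product, forces $n_i$ to be invertible in $\mathbb{F}$, so the division is legitimate even without separately invoking $n_i \mid |\mathsf{G}|$), and (i) and (iii) then follow by expanding the regular character and an arbitrary character against the irreducibles. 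One small remark: part (i) as literally stated speaks of ``all the distinct characters,'' which must be read as ``all the distinct \emph{irreducible} characters'' for the identity $\sum_i (\zeta_i(1))^2 = |\mathsf{G}|$ to hold, and your proof correctly adopts that reading.
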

 
 The next result is a consequence of Proposition \ref{1.62}. 
 
\begin{theorem}[Theorem 9, p.25, \cite{Serr77}]\label{1.65}
	Let $\mathsf{G}$ be a finite group, and $\mathbb{F}$ an algebraically closed field such that $\mathsf{char}(\mathbb{F})\nmid |\mathsf{G}|$. Then $\mathsf{G}$ is abelian iff all the irreducible characters of $\mathsf{G}$ have degree $1$.
\end{theorem}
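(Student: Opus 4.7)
The plan is to prove the two implications separately. The forward direction is a direct Schur-type argument using algebraic closure of $\mathbb{F}$; the backward direction pulls its strength from Proposition \ref{1.62}, which forces the existence of exactly $|\mathsf{G}|$ irreducible characters and allows us to conclude that these characters separate elements of $\mathsf{G}$.

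For the forward direction, suppose $\mathsf{G}$ is abelian and let $T:\mathsf{G}\to GL_n(\mathbb{F})$ be an irreducible representation with character $\zeta$. Fix any $g_0\in\mathsf{G}$. Since $\mathbb{F}$ is algebraically closed, $T(g_0)$ admits an eigenvalue $\lambda\in\mathbb{F}$; let $V_\lambda\subseteq\mathbb{F}^n$ be the corresponding eigenspace. Because $\mathsf{G}$ is abelian, $T(g)T(g_0)=T(g_0)T(g)$ for every $g\in\mathsf{G}$, and so $V_\lambda$ is $T$-invariant. Irreducibility forces $V_\lambda=\mathbb{F}^n$, i.e.\ $T(g_0)=\lambda I_n$. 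Since $g_0$ was arbitrary, every $T(g)$ is a scalar matrix, whence every $1$-dimensional subspace of $\mathbb{F}^n$ is $T$-invariant; by irreducibility $n=1$, and therefore $\zeta(1)=\mathsf{tr}(T(1))=1$.

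For the backward direction, assume that every irreducible character has degree $1$. Proposition \ref{1.62}(i) gives $s=\sum_{i=1}^s(\zeta_i(1))^2=|\mathsf{G}|$, so there are exactly $|\mathsf{G}|$ distinct irreducible characters $\zeta_1,\dots,\zeta_{|\mathsf{G}|}$, each a group homomorphism $\mathsf{G}\to\mathbb{F}^*$. Using $\mathsf{char}(\mathbb{F})\nmid|\mathsf{G}|$, the orthogonality relation of Proposition \ref{1.62}(ii) yields linear independence: if $\sum_i c_i\zeta_i\equiv 0$, then pairing with $\zeta_j(g^{-1})$ and summing over $g$ gives $c_j|\mathsf{G}|=0$, hence $c_j=0$. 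Thus the $|\mathsf{G}|$ characters form a basis of the $|\mathsf{G}|$-dimensional space $\mathbb{F}^{\mathsf{G}}$ of all functions on $\mathsf{G}$, and in particular they separate points. Now for any $g,h\in\mathsf{G}$ and every $i$, commutativity of $\mathbb{F}^*$ gives
\begin{equation}\nonumber
\zeta_i(gh)=\zeta_i(g)\zeta_i(h)=\zeta_i(h)\zeta_i(g)=\zeta_i(hg),
\end{equation}
so by point separation $gh=hg$, and $\mathsf{G}$ is abelian.

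The only real obstacle is the linear independence step in the backward direction: one must verify that $\mathsf{char}(\mathbb{F})\nmid|\mathsf{G}|$ is enough to conclude independence from Proposition \ref{1.62}(ii) without appealing to deeper machinery (such as the equality between the number of irreducible characters and the number of conjugacy classes, which is not among the preliminaries of Section \ref{preliminary}). Once independence is in hand, the separation-of-points argument is routine.
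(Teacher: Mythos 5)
The paper gives no proof of this statement: it is quoted directly from Serre's text, preceded only by the remark that it ``is a consequence of Proposition \ref{1.62},'' so there is no in-paper argument to compare against. Your proof is correct. The forward direction is the standard Schur-type argument: over an algebraically closed field a commuting family of matrices has a common eigenvector, and irreducibility then forces each $T(g_0)$ to be scalar and hence $n=1$. The backward direction uses Proposition \ref{1.62} exactly as the paper's remark suggests: part (i) to count that there are $|\mathsf{G}|$ degree-one irreducible characters, and part (ii) (together with $\mathsf{char}(\mathbb{F})\nmid|\mathsf{G}|$, so that $|\mathsf{G}|$ is invertible in $\mathbb{F}$) to deduce that they are linearly independent and therefore a basis of $\mathbb{F}^\mathsf{G}$, which separates points.

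One stylistic caveat about the final step: the identity $\zeta_i(gh)=\zeta_i(hg)$, which you derive from multiplicativity of the degree-one characters, is in fact automatic for every character (characters are class functions, and $gh$ is always conjugate to $hg$), so the homomorphism property is not really carrying the logical load there. The entire force of the argument lies in the preceding step — the $|\mathsf{G}|$ characters span all of $\mathbb{F}^\mathsf{G}$ and therefore separate points — which is exactly where the degree-one hypothesis is indispensable, since in a non-abelian group no family of class functions can separate points. If you wish to make this explicit, you can phrase it as: there are $|\mathsf{G}|$ linearly independent class functions on $\mathsf{G}$, hence the number of conjugacy classes is at least $|\mathsf{G}|$, hence every conjugacy class is a singleton and $\mathsf{G}$ is abelian. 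This is logically equivalent to your separation-of-points argument but makes visible why the multiplicativity step was never the bottleneck.
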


A consequence of previous theorem is that if $\mathsf{G}$ is finite abelian, then any irreducible character of $\mathsf{G}$ is a homomorphism of groups. In other words, if $\zeta: \mathsf{G}\rightarrow\mathbb{F}^*$ is an irreducible character of $\mathsf{G}$, then $\zeta(gh)=\zeta(g)\zeta(h)$ for any $g, h \in\mathsf{G}$.

The proposition below is a well-known result of Character Theory of Finite Group.

\begin{proposition}[Exercise 3.3, p.26, \cite{Serr77}]\label{1.68}
If $\mathsf{G}$ is a finite abelian group and $\mathbb{F}$ is an algebraically closed field such that $\mathsf{char}(\mathbb{F})=0$, then $\mathsf{G}$ and its dual $\widehat{\mathsf{G}}$ are isomorphic.
\end{proposition}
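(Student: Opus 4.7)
The plan is to reduce to the cyclic case via the fundamental theorem of finite abelian groups, then exhibit an explicit isomorphism in that case. First I would observe that by Theorem \ref{1.65}, since $\mathsf{G}$ is abelian and $\mathsf{char}(\mathbb{F})=0$, every irreducible character $\chi\in\widehat{\mathsf{G}}$ has degree $1$, hence is a group homomorphism $\chi:\mathsf{G}\to\mathbb{F}^*$. This endows $\widehat{\mathsf{G}}$ with the structure of an abelian group under pointwise multiplication (the identity is the trivial character $g\mapsto 1$, and the inverse of $\chi$ is $g\mapsto\chi(g)^{-1}=\chi(g^{-1})$). From Proposition \ref{1.62}(i), since each $\zeta_i(1)=1$, the total number of irreducible characters equals $|\mathsf{G}|$, so $|\widehat{\mathsf{G}}|=|\mathsf{G}|$.

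Next, I would treat the cyclic case. Suppose $\mathsf{G}=\langle g\rangle$ has order $n$. Any homomorphism $\chi:\mathsf{G}\to\mathbb{F}^*$ is determined by the value $\chi(g)$, which must satisfy $\chi(g)^n=\chi(g^n)=\chi(1)=1$. Since $\mathbb{F}$ is algebraically closed of characteristic zero, the polynomial $x^n-1$ has exactly $n$ distinct roots in $\mathbb{F}^*$ forming a cyclic group $\mu_n$ of order $n$. Fixing a primitive $n$-th root of unity $\omega\in\mathbb{F}$, the map $\mathsf{G}\to\widehat{\mathsf{G}}$ sending $g^k$ to the character $\chi_k$ defined by $\chi_k(g)=\omega^k$ is a group isomorphism.

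For the general case, invoke the structure theorem to write $\mathsf{G}\cong C_{n_1}\times\cdots\times C_{n_r}$ as a product of finite cyclic groups. I would then prove the product formula $\widehat{\mathsf{G}_1\times\mathsf{G}_2}\cong\widehat{\mathsf{G}_1}\times\widehat{\mathsf{G}_2}$ for finite abelian groups: the map $(\chi_1,\chi_2)\mapsto\chi$ with $\chi(g_1,g_2)=\chi_1(g_1)\chi_2(g_2)$ is a homomorphism with inverse given by restriction to the two factors (embedded via $g_1\mapsto(g_1,e)$ and $g_2\mapsto(e,g_2)$); surjectivity follows because any $\chi\in\widehat{\mathsf{G}_1\times\mathsf{G}_2}$ factors as the product of its restrictions since $(g_1,g_2)=(g_1,e)(e,g_2)$. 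Iterating this formula and combining with the cyclic case yields
\begin{equation}\nonumber
\widehat{\mathsf{G}}\cong\widehat{C_{n_1}}\times\cdots\times\widehat{C_{n_r}}\cong C_{n_1}\times\cdots\times C_{n_r}\cong\mathsf{G}.
\end{equation}

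The only genuinely delicate point will be ensuring that the cardinalities agree and that the map constructed is indeed an isomorphism (rather than just an injection), but this is guaranteed by the character-counting argument above and by the explicit construction in the cyclic case. No step requires serious computation; the main care lies in handling the product decomposition cleanly. An alternative and more conceptual route would be to observe that the double-dual map $\mathsf{G}\to\widehat{\widehat{\mathsf{G}}}$, $g\mapsto(\chi\mapsto\chi(g))$, is always injective (using linear independence of characters via the orthogonality relations in Proposition \ref{1.62}(ii)) and then compare orders, but the cyclic-product approach is self-contained given the tools already recalled in this section.
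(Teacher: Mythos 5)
Your proof is correct and follows essentially the same route as the paper's: decompose $\mathsf{G}$ into cyclic factors via the structure theorem, establish that the dual commutes with finite products, and handle the cyclic case by counting $n$-th roots of unity in $\mathbb{F}^*$. You fill in more detail (the explicit isomorphism $g^k\mapsto\chi_k$ with $\chi_k(g)=\omega^k$, and the product-formula verification) than the paper's terse argument, but the underlying strategy is identical.
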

\begin{proof}
Let $k_1,\dots,k_n$ be prime powers such that $\mathsf{G}\cong \mathsf{C}_{k_1}\times\cdots\times\mathsf{C}_{k_n}$, where $\mathsf{C}_{k_i}$ is the (multiplicative) cyclic group of order $k_i$. It if easy to see that $\widehat{\mathsf{G}}\cong  \widehat{\mathsf{C}}_{k_1}\times\cdots\times\widehat{\mathsf{C}}_{k_n}$ and $|\mathsf{C}_{k_i}|=|\widehat{\mathsf{C}}_{k_i}|$ for all $i=1,\dots,n$. Since $\chi(g)$ is an $\mathsf{o}(g)$-th root of unity, for any character $\chi$ of $\mathsf{C}_{k_i}$ and any $g\in\mathsf{C}_{k_i}$, where $\mathsf{o}(g)$ is the order of $g$, the result follows.
\end{proof}


\section{First results on Graded Bimodules}\label{firstresults}

In this section, we present some generalizations of the well-known results of the module's theory over algebras.

Let $\mathsf{M}$ be a $\mathsf{G}$-graded left $\mathfrak{A}$-module. Considering $\widetilde{\mathfrak{A}}$ an another $\mathsf{G}$-graded algebra, we have that $\mathsf{M}$ is a right $\widetilde{\mathfrak{A}}$-module with the trivial product, i.e. $ma=0$ for any $m\in\mathsf{M}$ and $a\in\widetilde{\mathfrak{A}}$, and hence, $\mathsf{M}$ is a $\mathsf{G}$-graded $(\mathfrak{A},\widetilde{\mathfrak{A}})$-bimodule naturally. Analogously, we can assume that a right $\widetilde{\mathfrak{A}}$-module is also a $(\mathfrak{A},\widetilde{\mathfrak{A}})$-bimodule. Hence, we can prove for bimodules some results for left and right modules. We show below some results for bimodules that are originally for left (or right) modules, whose proofs are similar to the proofs for one-sided cases. Therefore, all issues of this sections are inspired by known results of the theory of one-sided modules.

\begin{lemma}[Isomorphism Theorems for Graded Bimodules]\label{1.02}
	Let $\mathsf{G}$ be a group, $\mathfrak{A}$ and $\widetilde{\mathfrak{A}}$ two $\mathsf{G}$-graded algebras, and $\mathsf{M}$ a $\mathsf{G}$-graded $(\mathfrak{A},\widetilde{\mathfrak{A}})$-bimodule.
	\begin{itemize}
		\item[i)] If $\mathsf{M}'$ is a $\mathsf{G}$-graded $(\mathfrak{A},\widetilde{\mathfrak{A}})$-bimodule and $\psi: \mathsf{M}\longrightarrow \mathsf{M}'$ is a homogeneous homomorphism of degree $h$, then 
		$\displaystyle\frac{\mathsf{M}}{\mathsf{ker}(\psi)} \cong_{\mathsf{G}} \mathsf{im}(\psi)$ 
		as $\mathsf{G}$-graded $(\mathfrak{A},\widetilde{\mathfrak{A}})$-bimodules;
		\item[ii)] If $\mathsf{N}$ and $\mathsf{W}$ are $\mathsf{G}$-graded subbimodules of $\mathsf{M}$, then 
		$\displaystyle\frac{\mathsf{N}+\mathsf{W}}{\mathsf{W}}\cong_{\mathsf{G}} \frac{\mathsf{N}}{\mathsf{N}\cap \mathsf{W}}$ 
		as $\mathsf{G}$-graded $(\mathfrak{A},\widetilde{\mathfrak{A}})$-bimodules. In addition, if $\mathsf{N}\subseteq \mathsf{W}$, then 
	$	\displaystyle\frac{\mathsf{M}}{\mathsf{W}}\cong_{\mathsf{G}} \frac{\mathsf{M}/{\mathsf{N}}}{{\mathsf{W}}/{\mathsf{N}}}$ 
		as $\mathsf{G}$-graded $(\mathfrak{A},\widetilde{\mathfrak{A}})$-bimodules.
	\end{itemize}  
\end{lemma}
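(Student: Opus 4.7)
The plan is to reduce both parts to a single careful application of the first isomorphism theorem for ungraded bimodules, plus a verification that every map and every quotient in sight respects the $\mathsf{G}$-grading. All ungraded versions are standard (and the definitions of quotient bimodule and homomorphism of bimodules given earlier make the underlying bimodule statements automatic), so the real content is the homogeneity bookkeeping.

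For (i), I would first invoke the remark from the preliminaries that the kernel and image of a homogeneous homomorphism of $\mathsf{G}$-graded bimodules are graded subbimodules. Indeed, if $\psi$ has degree $h$ and $m=\sum_g m_g\in\ker(\psi)$ with $m_g\in\mathsf{M}_g$, then $\sum_g \psi(m_g)=0$ and $\psi(m_g)\in \widetilde{\mathsf{M}}_{gh}\cap\widetilde{\mathsf{M}}_{hg}$; the distinct components $\psi(m_g)$ sit in distinct homogeneous pieces of $\mathsf{M}'$, so each $\psi(m_g)=0$, showing $\ker(\psi)=\bigoplus_g(\ker(\psi)\cap\mathsf{M}_g)$. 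A parallel argument handles $\mathsf{im}(\psi)$. With this in hand, the classical quotient map $\bar{\psi}:\mathsf{M}/\ker(\psi)\to\mathsf{im}(\psi)$ defined by $\bar{\psi}(m+\ker(\psi))=\psi(m)$ is an isomorphism of $(\mathfrak{A},\widetilde{\mathfrak{A}})$-bimodules. To check that $\bar{\psi}$ is homogeneous of degree $h$, use the natural grading
\begin{equation}\nonumber
\frac{\mathsf{M}}{\ker(\psi)}=\bigoplus_{g\in\mathsf{G}}\frac{\mathsf{M}_g}{\ker(\psi)\cap\mathsf{M}_g}
\end{equation}
already introduced in the preliminaries: for $\bar{m}=m+\ker(\psi)$ with $m\in\mathsf{M}_g$ we have $\bar{\psi}(\bar{m})=\psi(m)\in\widetilde{\mathsf{M}}_{gh}\cap\widetilde{\mathsf{M}}_{hg}$, so $\bar{\psi}$ indeed sends the $g$-component of the source into the $(gh)$- and $(hg)$-components of the target.

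For (ii), both statements follow by producing a suitable homogeneous homomorphism of degree $e$ and applying (i). For the diamond isomorphism, I would first check the standard fact that sums and intersections of graded subbimodules are graded (the sum case is immediate; for intersections one observes that if $x=\sum_g x_g\in \mathsf{N}\cap\mathsf{W}$ with $x_g\in\mathsf{M}_g$, then the decomposition of $x$ in $\mathsf{N}=\bigoplus_g(\mathsf{N}\cap\mathsf{M}_g)$ must coincide with the decomposition in $\mathsf{W}$, forcing each $x_g\in\mathsf{N}\cap\mathsf{W}\cap\mathsf{M}_g$). Then define $\varphi:\mathsf{N}\to(\mathsf{N}+\mathsf{W})/\mathsf{W}$ by $\varphi(n)=n+\mathsf{W}$: this is a surjective bimodule homomorphism with $\ker(\varphi)=\mathsf{N}\cap\mathsf{W}$, and clearly $\varphi(\mathsf{N}_g)$ lies in the $g$-th homogeneous component of the quotient, so $\varphi$ is homogeneous of degree $e$. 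Applying (i) yields the first isomorphism. For the third isomorphism, when $\mathsf{N}\subseteq\mathsf{W}$, the map $\pi:\mathsf{M}/\mathsf{N}\to\mathsf{M}/\mathsf{W}$, $\pi(m+\mathsf{N})=m+\mathsf{W}$, is well defined, surjective, a bimodule homomorphism with kernel $\mathsf{W}/\mathsf{N}$, and it manifestly sends the $g$-piece $\mathsf{M}_g/(\mathsf{N}\cap\mathsf{M}_g)$ into the $g$-piece $\mathsf{M}_g/(\mathsf{W}\cap\mathsf{M}_g)$, hence is homogeneous of degree $e$. A second application of (i) finishes the argument.

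The only mildly delicate point, and the place where I would pay attention, is the homogeneity verification for $\bar{\psi}$ in part (i), since $\psi$ has an arbitrary degree $h$ that need not be central: one must use exactly the hypothesis $\psi(\mathsf{M}_g)\subseteq\widetilde{\mathsf{M}}_{gh}\cap\widetilde{\mathsf{M}}_{hg}$ from the definition of homogeneous homomorphism. Everything else is routine graded bookkeeping on top of the classical isomorphism theorems for bimodules.
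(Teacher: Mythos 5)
Your proof is correct and takes essentially the same approach the paper indicates, namely adapting the classical isomorphism theorems for bimodules (the paper cites Theorems 6.19--6.21 of Rotman without giving details) by verifying the graded bookkeeping: kernels, images, sums, and intersections are graded subbimodules, and the canonical maps are homogeneous. In particular, your argument that the nonzero $\psi(m_g)$ lie in distinct homogeneous pieces because $g\mapsto gh$ is injective is the right observation, and reducing part (ii) to two applications of (i) via degree-$e$ quotient maps matches the standard treatment.
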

\begin{proof}
The proof is adapted from Theorems 6.19, 6.20 and 6.21, \cite{Rotm10}, p.406.
\end{proof}

\begin{lemma}[Correspondence Theorem for Graded Bimodules]\label{1.04}
	Let $\mathsf{G}$ be a group, $\mathfrak{A}$ and $\widetilde{\mathfrak{A}}$ two $\mathsf{G}$-graded algebras and $\mathsf{M}$ a $\mathsf{G}$-graded $(\mathfrak{A},\widetilde{\mathfrak{A}})$-bimodule. Suppose $\mathsf{N}$ is a graded subbimodule of $\mathsf{M}$. Then any graded subbimodule of the quotient bimodule $\mathsf{M}/\mathsf{N}$ is of the form $\mathsf{P}/\mathsf{N}=\{x+\mathsf{N}: x\in \mathsf{P}\}$, where $\mathsf{P}$ is a graded subbimodule of $\mathsf{M}$ such that $\mathsf{N}\subset \mathsf{P}\subset \mathsf{M}$. The correspondence between graded subbimodules of $\mathsf{M}/\mathsf{N}$ and graded subbimodules of $\mathsf{M}$ which contain $\mathsf{N}$ is a bijection.
\end{lemma}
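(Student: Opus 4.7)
The plan is to adapt the classical Correspondence Theorem by working with the canonical projection $\pi:\mathsf{M}\rightarrow\mathsf{M}/\mathsf{N}$. By the construction of the graded quotient described in the preliminaries, $(\mathsf{M}/\mathsf{N})_g = \mathsf{M}_g/(\mathsf{N}\cap\mathsf{M}_g)$, so $\pi$ satisfies $\pi(\mathsf{M}_g)\subseteq (\mathsf{M}/\mathsf{N})_g$ for every $g\in\mathsf{G}$, making it a homogeneous homomorphism of $(\mathfrak{A},\widetilde{\mathfrak{A}})$-bimodules of degree $e$. Also note that $\pi$ is surjective and $\ker(\pi)=\mathsf{N}$.

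First, I would introduce the two candidate maps. Let $\mathcal{S}$ be the family of graded subbimodules of $\mathsf{M}$ that contain $\mathsf{N}$ and let $\mathcal{T}$ be the family of graded subbimodules of $\mathsf{M}/\mathsf{N}$. Define $\Phi:\mathcal{S}\rightarrow\mathcal{T}$ by $\Phi(\mathsf{P})=\pi(\mathsf{P})=\mathsf{P}/\mathsf{N}$ and $\Psi:\mathcal{T}\rightarrow\mathcal{S}$ by $\Psi(\mathsf{Q})=\pi^{-1}(\mathsf{Q})$. Then I would verify that $\Phi$ is well-defined: since $\mathsf{P}=\bigoplus_{g}(\mathsf{P}\cap\mathsf{M}_g)$ and $\mathsf{N}\subseteq\mathsf{P}$, the set $\mathsf{P}/\mathsf{N}$ is a subbimodule of $\mathsf{M}/\mathsf{N}$ and $\mathsf{P}/\mathsf{N}=\bigoplus_{g}(\mathsf{P}\cap\mathsf{M}_g)/(\mathsf{N}\cap\mathsf{M}_g)$, hence graded.

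Second, I would verify $\Psi$ is well-defined. Because $\pi$ is an $(\mathfrak{A},\widetilde{\mathfrak{A}})$-bimodule map, $\pi^{-1}(\mathsf{Q})$ is a subbimodule of $\mathsf{M}$ containing $\mathsf{N}=\pi^{-1}(\{0\})$. For the grading, take $m=\sum_{g}m_g\in\pi^{-1}(\mathsf{Q})$ with $m_g\in\mathsf{M}_g$; then $\pi(m)=\sum_{g}\pi(m_g)\in\mathsf{Q}$ with $\pi(m_g)\in(\mathsf{M}/\mathsf{N})_g$. Since $\mathsf{Q}$ is graded, each homogeneous component $\pi(m_g)$ lies in $\mathsf{Q}$, whence $m_g\in\pi^{-1}(\mathsf{Q})$ for all $g$, proving $\pi^{-1}(\mathsf{Q})=\bigoplus_{g}(\pi^{-1}(\mathsf{Q})\cap\mathsf{M}_g)$.

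Finally, I would close the argument with the inversion identities. The equality $\Phi\circ\Psi=\mathrm{id}_{\mathcal{T}}$ follows from surjectivity: $\pi(\pi^{-1}(\mathsf{Q}))=\mathsf{Q}$. The equality $\Psi\circ\Phi=\mathrm{id}_{\mathcal{S}}$ amounts to $\pi^{-1}(\pi(\mathsf{P}))=\mathsf{P}+\mathsf{N}=\mathsf{P}$, where the last step uses $\mathsf{N}\subseteq\mathsf{P}$. The main (and only) subtlety I expect is the gradedness check in the well-definedness of $\Psi$; this is exactly where the homogeneity of $\pi$ together with the directness of the sum $\bigoplus_{g}(\mathsf{M}/\mathsf{N})_g$ is indispensable, and once that is established the rest reduces to the classical one-sided argument (cf. Theorem 6.22 in \cite{Rotm10}).
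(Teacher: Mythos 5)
Your proof is correct and is precisely the adaptation the paper has in mind: it simply cites Theorem 6.22 of Rotman and leaves the details to the reader, and you supply exactly those details, with the only genuinely graded step being the homogeneity check in the well-definedness of $\Psi$, which you handle correctly via the directness of the decomposition $\mathsf{M}/\mathsf{N}=\bigoplus_g \mathsf{M}_g/(\mathsf{N}\cap\mathsf{M}_g)$.
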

\begin{proof}
The proof is adapted from Theorem 6.22, \cite{Rotm10}, p.407.
\end{proof}

\begin{lemma}\label{1.11}
	Let $\mathsf{G}$ be a group, $\mathfrak{A}$ and $\widetilde{\mathfrak{A}}$ two $\mathsf{G}$-graded algebras and $\mathsf{M}$ a $\mathsf{G}$-graded $(\mathfrak{A},\widetilde{\mathfrak{A}})$-bimodule. Let $\mathsf{N}$ be a graded subbimodule of $\mathsf{M}$. Then $\mathsf{M}$ is weak $\mathsf{G}$-artinian (resp. weak $\mathsf{G}$-noetherian) iff $\mathsf{N}$ and $\mathsf{M}/\mathsf{N}$ are weak $\mathsf{G}$-artinian (resp. weak $\mathsf{G}$-noetherian).
\end{lemma}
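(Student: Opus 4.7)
The plan is to mimic the classical proof for one-sided modules, using the already-established Isomorphism Theorems (Lemma \ref{1.02}) and Correspondence Theorem (Lemma \ref{1.04}) for graded bimodules. The statement is a biconditional, so I would treat the two directions separately.

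For the forward direction, assume $\mathsf{M}$ is weak $\mathsf{G}$-artinian (the noetherian case is dual, so I would only write one and note the obvious translation). First, any descending chain of graded subbimodules of $\mathsf{N}$ is automatically a descending chain of graded subbimodules of $\mathsf{M}$, so it stabilizes; hence $\mathsf{N}$ is weak $\mathsf{G}$-artinian. Next, given a descending chain $\overline{\mathsf{P}}_1 \supseteq \overline{\mathsf{P}}_2 \supseteq \cdots$ of graded subbimodules of $\mathsf{M}/\mathsf{N}$, I invoke Lemma \ref{1.04} to lift each $\overline{\mathsf{P}}_i$ uniquely to a graded subbimodule $\mathsf{P}_i$ of $\mathsf{M}$ containing $\mathsf{N}$ with $\mathsf{P}_i/\mathsf{N} = \overline{\mathsf{P}}_i$. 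The bijection preserves inclusions, so the lifted chain is descending in $\mathsf{M}$ and therefore stabilizes; pushing back down, so does the original chain.

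For the backward direction, suppose both $\mathsf{N}$ and $\mathsf{M}/\mathsf{N}$ are weak $\mathsf{G}$-artinian and let $\mathsf{M}_1 \supseteq \mathsf{M}_2 \supseteq \cdots$ be a descending chain of graded subbimodules of $\mathsf{M}$. The crucial observation is that $\mathsf{M}_i \cap \mathsf{N}$ and $\mathsf{M}_i + \mathsf{N}$ are again graded subbimodules, hence $(\mathsf{M}_i + \mathsf{N})/\mathsf{N}$ is a graded subbimodule of $\mathsf{M}/\mathsf{N}$. By hypothesis, the induced chains $\{\mathsf{M}_i \cap \mathsf{N}\}_i$ in $\mathsf{N}$ and $\{(\mathsf{M}_i + \mathsf{N})/\mathsf{N}\}_i$ in $\mathsf{M}/\mathsf{N}$ both stabilize, say from index $i_0$ onward. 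I then appeal to the standard pinching lemma: if $\mathsf{M}_{i+1} \subseteq \mathsf{M}_i$ with $\mathsf{M}_i \cap \mathsf{N} = \mathsf{M}_{i+1} \cap \mathsf{N}$ and $\mathsf{M}_i + \mathsf{N} = \mathsf{M}_{i+1} + \mathsf{N}$, then $\mathsf{M}_i = \mathsf{M}_{i+1}$. This is immediate: given $m \in \mathsf{M}_i$, write $m = m' + n$ with $m' \in \mathsf{M}_{i+1}$, $n \in \mathsf{N}$; then $n = m - m' \in \mathsf{M}_i \cap \mathsf{N} = \mathsf{M}_{i+1} \cap \mathsf{N}$, so $m \in \mathsf{M}_{i+1}$. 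The noetherian case is handled identically with ascending chains.

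There is no real obstacle here. The only small points to be careful about are: verifying that intersections and sums of graded subbimodules remain graded (trivial from the decomposition $\mathsf{N} = \bigoplus_g (\mathsf{N} \cap \mathsf{M}_g)$); and ensuring the correspondence in Lemma \ref{1.04} genuinely preserves the order structure needed to transfer chain conditions, which is exactly what that lemma asserts. Everything else is a direct transcription of the standard module-theoretic argument into the graded bimodule setting.
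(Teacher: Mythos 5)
Your proof is correct and is exactly the standard module-theoretic argument that the paper implicitly invokes by citing Theorem~2 of Ribenboim's \emph{Rings and Modules} (the paper itself only says ``the proof is adapted from'' that reference and writes out no details). You lift/restrict chains via the Correspondence Theorem for one direction, and use the intersection-plus-sum pinching argument for the other, taking the minor but necessary care that intersections and sums of graded subbimodules remain graded; this is precisely the intended adaptation.
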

\begin{proof}
The proof is adapted from Theorem 2, \cite{Ribe69}, p.41.
\end{proof}

Using the ideas of the previous lemma and Lemma \ref{1.04}, we can build a descending (or an ascending) chain of graded subbimodules with a special property. Consider a $\mathsf{G}$-graded $(\mathfrak{A},\widetilde{\mathfrak{A}})$-bimodule $\mathsf{M}$. Suppose $\mathsf{M}$ is weak $\mathsf{G}$-noetherian (resp. weak $\mathsf{G}$-artinian). By the previous lemma, given a graded subbimodule $\mathsf{N}$ of $\mathsf{M}$, we have that $\mathsf{N}$ and $\mathsf{M}/\mathsf{N}$ are weak $\mathsf{G}$-noetherian (resp. weak $\mathsf{G}$-artinian). If $\mathsf{M}$ is not $\mathsf{G}$-irreducible, so there exists a $\mathsf{G}$-maximal (resp. $\mathsf{G}$-irreducible) subbimodule $\mathsf{N}_1$ in $\mathsf{M}$. If $\mathsf{N}_1$ (resp. $\mathsf{M}/\mathsf{N}_1$) is $\mathsf{G}$-irreducible, then $\mathsf{M}=\mathsf{N}_0\supsetneq \mathsf{N}_1\supsetneq \mathsf{N}_2=\{0\}$ (resp. $\{0\}=\mathsf{N}_0\subsetneq \mathsf{N}_1\subsetneq \mathsf{N}_2=\mathsf{M}$) with $\mathsf{N}_{i+1}$ is $\mathsf{G}$-maximal in $\mathsf{N}_i$ (resp. $\mathsf{N}_{i+1}/\mathsf{N}_i$ is $\mathsf{G}$-irreducible), for $i=0,1$. Otherwise, suppose that $\mathsf{N}_1$ (resp. $\mathsf{M}/\mathsf{N}_1$) is not $\mathsf{G}$-irreducible, and hence, there exists a nonzero graded subbimodule $\mathsf{N}_2$ of $\mathsf{M}$ such that $\mathsf{N}_2\subsetneq \mathsf{N}_1$ and $\mathsf{N}_2$ is $\mathsf{G}$-maximal $\mathsf{N}_1$ (resp. $\mathsf{N}_2\supsetneq \mathsf{N}_1$ and $\mathsf{N}_2/\mathsf{N}_1$ is $\mathsf{G}$-irreducible). So, we obtain the chain $\mathsf{M}=\mathsf{N}_0\supsetneq \mathsf{N}_1\supsetneq \mathsf{N}_2\supsetneq \mathsf{N}_3=\{0\}$ (resp. $\{0\}=\mathsf{N}_0\subsetneq \mathsf{N}_1\subsetneq \mathsf{N}_2\subsetneq \mathsf{N}_3=\mathsf{M}$). If $\mathsf{N}_2$ (resp. $\mathsf{M}/\mathsf{N}_2$) is $\mathsf{G}$-irreducible, it follows that $\mathsf{N}_{i+1}$ is $\mathsf{G}$-maximal in $\mathsf{N}_i$ (resp. $\mathsf{N}_{i+1}/\mathsf{N}_i$ is $\mathsf{G}$-irreducible), for $i=0,1,2$. Otherwise, using this process inductively we must obtain a descending chain (resp. an ascending chain) of graded subbimodules
\begin{equation}\label{1.16}
\mathsf{M}=\mathsf{N}_0\supsetneq \mathsf{N}_1\supsetneq \mathsf{N}_2\supsetneq \cdots\supsetneq \{0\}
\quad (\mbox{resp. } \{0\}=\mathsf{N}_0\subsetneq \mathsf{N}_1\subsetneq \mathsf{N}_2\subsetneq \cdots\subsetneq \mathsf{M})
\ ,
\end{equation}
such that $\mathsf{N}_{i+1}$ is maximal in $\mathsf{N}_i$ (resp. $\mathsf{N}_{i+1}/\mathsf{N}_i$ is irreducible), for $i=0,1,2, \dots$. Notice that we use Lemma \ref{1.04} to ensure that if $\mathsf{M}/\mathsf{N}_i$ is not irreducible, then there exists a graded subbimodule $\mathsf{N}_{i+1}\supsetneq \mathsf{N}_i$ such that $\mathsf{N}_{i+1}/\mathsf{N}_i$ is irreducible.

Now, again by Lemma \ref{1.11}, it is easy to show that the finite direct sum (internal or external) of weak $\mathsf{G}$-noetherian (resp. weak $\mathsf{G}$-artinian) $(\mathfrak{A},\widetilde{\mathfrak{A}})$-bimodules is a weak $\mathsf{G}$-noetherian (resp. weak $\mathsf{G}$-artinian) $(\mathfrak{A},\widetilde{\mathfrak{A}})$-bimodule. The proof of this fact can be made by induction over the number of summands and recursively applying the Lemma \ref{1.11}.

Notice that given a homogeneous homomorphism of graded $(\mathfrak{A},\widetilde{\mathfrak{A}})$-bimodules $\varphi: \mathsf{M}_1\rightarrow\mathsf{M}_2$, if $\mathsf{M}_1$ is weak $\mathsf{G}$-noetherian (resp. weak $\mathsf{G}$-artinian), then $\mathsf{im}(\varphi)$ and $\mathsf{ker}(\varphi)$ are weak $\mathsf{G}$-noetherian (resp. weak $\mathsf{G}$-artinian) subbimodules of $\mathsf{M}_2$ and $\mathsf{M}_1$, respectively, since $\mathsf{ker}(\varphi)$ is a graded subbimodule of $\mathsf{M}_1$ (see Lemma \ref{1.11}) and $\mathsf{im}(\varphi) \cong_{\mathsf{G}}{\mathsf{M}_1}/{\mathsf{ker}(\varphi)}$ (see Lemma \ref{1.02}) is a graded subbimodule of $\mathsf{M}_2$.

\begin{lemma}\label{1.57}
	Let $\mathfrak{A}$ and $\widetilde{\mathfrak{A}}$ be two $\mathsf{G}$-graded algebras, and $\mathsf{M}=\mathsf{N}+\mathsf{W}$ a $\mathsf{G}$-graded $(\mathfrak{A},\widetilde{\mathfrak{A}})$-bimodule. If $\mathsf{N}$ and $\mathsf{W}$ are weak $\mathsf{G}$-artinian (resp. weak $\mathsf{G}$-noetherian) subbimodules of $\mathsf{M}$, then $\mathsf{M}$ is weak $\mathsf{G}$-artinian (resp. weak $\mathsf{G}$-noetherian).
\end{lemma}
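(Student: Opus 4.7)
The plan is to reduce the statement to a combination of the second isomorphism theorem for graded bimodules (Lemma \ref{1.02}, part ii) and the already-established fact that a graded bimodule is weak $\mathsf{G}$-artinian (resp. weak $\mathsf{G}$-noetherian) if and only if both a given graded subbimodule and its associated quotient are (Lemma \ref{1.11}). In other words, I want to pinch $\mathsf{M}$ between $\mathsf{W}$ and $\mathsf{M}/\mathsf{W}$, and show both ends satisfy the chain condition.

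First I would note that since $\mathsf{N}$ and $\mathsf{W}$ are graded subbimodules of $\mathsf{M}$, so is $\mathsf{M} = \mathsf{N} + \mathsf{W}$, and so are $\mathsf{W}$ (by hypothesis) and $\mathsf{N} \cap \mathsf{W}$ (the intersection of two graded subbimodules is a graded subbimodule, since both decompose along the $\mathsf{G}$-homogeneous components of $\mathsf{M}$). By Lemma \ref{1.02} ii), applied to the graded subbimodules $\mathsf{N}$ and $\mathsf{W}$ of $\mathsf{M}$, there is a homogeneous isomorphism
\begin{equation}\nonumber
\frac{\mathsf{M}}{\mathsf{W}} \;=\; \frac{\mathsf{N} + \mathsf{W}}{\mathsf{W}} \;\cong_{\mathsf{G}}\; \frac{\mathsf{N}}{\mathsf{N} \cap \mathsf{W}} \ .
\end{equation}

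Next, since $\mathsf{N}$ is weak $\mathsf{G}$-artinian (resp. weak $\mathsf{G}$-noetherian) and $\mathsf{N} \cap \mathsf{W}$ is a graded subbimodule of $\mathsf{N}$, Lemma \ref{1.11} (applied inside $\mathsf{N}$) gives that $\mathsf{N}/(\mathsf{N} \cap \mathsf{W})$ is weak $\mathsf{G}$-artinian (resp. weak $\mathsf{G}$-noetherian). Transporting this property across the isomorphism above, $\mathsf{M}/\mathsf{W}$ is weak $\mathsf{G}$-artinian (resp. weak $\mathsf{G}$-noetherian).

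Finally, I would invoke Lemma \ref{1.11} a second time, now applied to $\mathsf{M}$ with the graded subbimodule $\mathsf{W}$: since both $\mathsf{W}$ and $\mathsf{M}/\mathsf{W}$ are weak $\mathsf{G}$-artinian (resp. weak $\mathsf{G}$-noetherian), so is $\mathsf{M}$. I do not expect a genuine obstacle here; the only small point requiring care is the verification that the objects involved ($\mathsf{N}+\mathsf{W}$, $\mathsf{N} \cap \mathsf{W}$, and the quotients) are indeed graded subbimodules so that Lemmas \ref{1.02} and \ref{1.11} genuinely apply, and that the chain-condition property transfers across the homogeneous isomorphism (which is immediate since homogeneous isomorphisms preserve graded subbimodules).
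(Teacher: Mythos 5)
Your proof is correct and follows exactly the same route as the paper's: apply Lemma \ref{1.02}(ii) to identify $\mathsf{M}/\mathsf{W}$ with $\mathsf{N}/(\mathsf{N}\cap\mathsf{W})$, use Lemma \ref{1.11} once to conclude the latter inherits the chain condition from $\mathsf{N}$, then use Lemma \ref{1.11} a second time to lift the property from $\mathsf{W}$ and $\mathsf{M}/\mathsf{W}$ back to $\mathsf{M}$. The only difference is that you are slightly more explicit about verifying that $\mathsf{N}\cap\mathsf{W}$ and the quotients are graded, which the paper leaves implicit.
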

\begin{proof}
By Lemma \ref{1.02}, item (ii), we have that 
	\begin{equation}\nonumber
		\displaystyle\frac{\mathsf{M}}{\mathsf{W}}\cong_{\mathsf{G}}\frac{\mathsf{N}+\mathsf{W}}{\mathsf{W}}\cong_{\mathsf{G}} \frac{\mathsf{N}}{\mathsf{N}\cap \mathsf{W}} \ .
	\end{equation}
Since $\mathsf{N}$ is weak $\mathsf{G}$-artinian (resp. weak $\mathsf{G}$-noetherian), it follows from Lemma \ref{1.11} that $\displaystyle\frac{\mathsf{N}}{\mathsf{N}\cap \mathsf{W}}$ is weak $\mathsf{G}$-artinian (resp. weak $\mathsf{G}$-noetherian). Again by Lemma \ref{1.11}, it follows that $\mathsf{M}$ is weak $\mathsf{G}$-artinian (resp. weak $\mathsf{G}$-noetherian), because $\mathsf{W}$ and $\displaystyle\frac{\mathsf{M}}{\mathsf{W}}\cong_{\mathsf{G}} \frac{\mathsf{N}}{\mathsf{N}\cap \mathsf{W}}$ are weak $\mathsf{G}$-artinian (resp. weak $\mathsf{G}$-noetherian).
\end{proof}

\begin{lemma}\label{1.46}
Let $\mathfrak{A}$ and $\widetilde{\mathfrak{A}}$ be two $\mathsf{G}$-graded algebras, and $\mathsf{M}$ a $\mathsf{G}$-graded $(\mathfrak{A},\widetilde{\mathfrak{A}})$-bimodule. Then $\mathsf{M}$ is weak $\mathsf{G}$-noetherian (resp. weak $\mathsf{G}$-artinian) iff every non-empty family $S$ of graded subbimodules of $\mathsf{M}$ contains a maximal (resp. a minimal) graded subbimodule in the family, that is, a graded subbimodule $\mathsf{P}\in S$ such that if $\mathsf{N}\in S$ and $\mathsf{N}\supset \mathsf{P}$ (resp. $\mathsf{N}\subset \mathsf{P}$), then $\mathsf{N}=\mathsf{P}$.
\end{lemma}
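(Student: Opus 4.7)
The plan is to prove both directions by the standard equivalence between chain conditions and the existence of maximal/minimal elements in families of subobjects, carried over verbatim from the one-sided module setting, since the set of graded subbimodules of $\mathsf{M}$ is a poset under inclusion just like the set of graded submodules. I will do the noetherian (ascending/maximal) case in detail; the artinian (descending/minimal) case is entirely dual and I will just indicate the change.

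For the implication ``chain condition $\Rightarrow$ maximal/minimal element exists'', I would argue by contradiction. Assume $\mathsf{M}$ is weak $\mathsf{G}$-noetherian and let $S$ be a non-empty family of graded subbimodules of $\mathsf{M}$ admitting no maximal element. Pick any $\mathsf{N}_1 \in S$. Since $\mathsf{N}_1$ is not maximal in $S$, there exists $\mathsf{N}_2 \in S$ with $\mathsf{N}_1 \subsetneq \mathsf{N}_2$. Iterating (using the axiom of dependent choice), I obtain a strictly ascending chain
\begin{equation}\nonumber
\mathsf{N}_1 \subsetneq \mathsf{N}_2 \subsetneq \mathsf{N}_3 \subsetneq \cdots
\end{equation}
of graded subbimodules of $\mathsf{M}$, contradicting the ACC for graded subbimodules. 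The artinian case is dual: replace ``maximal'' by ``minimal'', reverse all inclusions, and obtain an infinite strictly descending chain contradicting the DCC.

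For the converse ``every non-empty family has a maximal/minimal element $\Rightarrow$ chain condition'', I would take an arbitrary ascending chain
\begin{equation}\nonumber
\mathsf{N}_1 \subseteq \mathsf{N}_2 \subseteq \mathsf{N}_3 \subseteq \cdots
\end{equation}
of graded subbimodules of $\mathsf{M}$ and form the non-empty family $S = \{\mathsf{N}_i : i \geq 1\}$. By hypothesis $S$ has a maximal element $\mathsf{N}_k$, and then for every $j \geq k$ the inclusion $\mathsf{N}_k \subseteq \mathsf{N}_j$ together with maximality of $\mathsf{N}_k$ in $S$ forces $\mathsf{N}_j = \mathsf{N}_k$, so the chain stabilizes and ACC holds. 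The artinian direction is identical with inclusions reversed and ``maximal'' replaced by ``minimal''.

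I do not expect any genuine obstacle: the only subtlety is invoking dependent choice to manufacture the infinite chain in the first direction, and checking that the graded subbimodule structure plays no role beyond the fact that the collection of graded subbimodules is a poset closed under nothing more than what the chain-condition definitions already require. No new properties of $\mathfrak{A}$, $\widetilde{\mathfrak{A}}$, or of the grading are needed, and no earlier lemma of the paper is invoked; the argument is purely order-theoretic.
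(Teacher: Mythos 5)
Your proof is correct and is precisely the standard order-theoretic argument the paper refers to when it says the proof ``is adapted from Proposition 5.33 in Rotman'': the forward direction builds a strictly increasing (resp.\ decreasing) chain by dependent choice from the assumed absence of a maximal (resp.\ minimal) element, and the converse extracts a maximal (resp.\ minimal) element from the family formed by an arbitrary chain to show it stabilizes. There is no genuine difference in approach; your remark that the grading plays no role beyond making the collection of graded subbimodules a poset is exactly the point.
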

\begin{proof}
The proof is adapted from Proposition 5.33, \cite{Rotm10}, p.312.
\end{proof}

By the previous lemma, it is clear that any weak $\mathsf{G}$-noetherian (resp. weak $\mathsf{G}$-artinian) $(\mathfrak{A},\widetilde{\mathfrak{A}})$-bimodule which is not irreducible has some maximal (resp. minimal) proper graded subbimodule.

\begin{lemma}\label{1.45}
Let $\mathfrak{A}$ and $\widetilde{\mathfrak{A}}$ be two $\mathsf{G}$-graded algebras, and $\mathsf{M}$ a $\mathsf{G}$-graded $(\mathfrak{A},\widetilde{\mathfrak{A}})$-bimodule. Then $\mathsf{M}$ is weak $\mathsf{G}$-noetherian iff every $\mathsf{G}$-graded subbimodule of $\mathsf{M}$ is finitely generated.
\end{lemma}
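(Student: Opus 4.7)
The plan is to mimic the classical one-sided module argument, translated into the graded-bimodule language and using Lemma~\ref{1.46} for the forward direction.

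For the implication ``weak $\mathsf{G}$-noetherian $\Rightarrow$ every graded subbimodule is finitely generated'', I would take any graded subbimodule $\mathsf{N}\subseteq\mathsf{M}$ and consider the family $S$ of all \emph{finitely generated} graded subbimodules of $\mathsf{N}$. This family is non-empty because $\{0\}\in S$. Since $\mathsf{M}$ is weak $\mathsf{G}$-noetherian, Lemma~\ref{1.46} gives a maximal element $\mathsf{P}\in S$. I claim $\mathsf{P}=\mathsf{N}$; otherwise there exists a homogeneous $n\in\mathsf{N}\setminus\mathsf{P}$ (graded subbimodules of $\mathsf{M}$ are spanned by their homogeneous elements), and then $\mathsf{P}+{_{\mathfrak{A}}}\{n\}_{\widetilde{\mathfrak{A}}}$ is a graded subbimodule of $\mathsf{N}$, still finitely generated (adjoin $n$ to the finite generating set of $\mathsf{P}$), and strictly larger than $\mathsf{P}$, contradicting maximality.

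For the converse, assume every graded subbimodule of $\mathsf{M}$ is finitely generated and take an ascending chain
\begin{equation}\nonumber
\mathsf{N}_1\subseteq \mathsf{N}_2\subseteq \mathsf{N}_3\subseteq \cdots
\end{equation}
of graded subbimodules of $\mathsf{M}$. Set $\mathsf{N}=\bigcup_{i\geq 1}\mathsf{N}_i$. A quick check shows $\mathsf{N}$ is a graded subbimodule: it is closed under addition and under left/right action of homogeneous elements of $\mathfrak{A}$ and $\widetilde{\mathfrak{A}}$ since the chain is nested, and $\mathsf{N}=\bigoplus_{g\in\mathsf{G}}(\mathsf{N}\cap\mathsf{M}_g)$ because each $\mathsf{N}_i$ has this property. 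By hypothesis, $\mathsf{N}={_{\mathfrak{A}}}\{x_1,\dots,x_k\}_{\widetilde{\mathfrak{A}}}$ for finitely many homogeneous generators $x_1,\dots,x_k$. Each $x_\ell$ lies in some $\mathsf{N}_{j_\ell}$; setting $j=\max\{j_1,\dots,j_k\}$, all $x_\ell$ belong to $\mathsf{N}_j$, hence $\mathsf{N}={_{\mathfrak{A}}}\{x_1,\dots,x_k\}_{\widetilde{\mathfrak{A}}}\subseteq \mathsf{N}_j\subseteq \mathsf{N}$. Therefore $\mathsf{N}_i=\mathsf{N}_j$ for all $i\geq j$, so the ACC on graded subbimodules holds and $\mathsf{M}$ is weak $\mathsf{G}$-noetherian.

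The only subtle point (and the main thing to be careful with) is confirming that the union $\mathsf{N}$ remains a \emph{graded} subbimodule and that the notation ${_{\mathfrak{A}}}\{\cdot\}_{\widetilde{\mathfrak{A}}}$ — which need not equal $\mathfrak{A}\{\cdot\}\widetilde{\mathfrak{A}}$ in the non-unitary case — still produces a finitely generated graded subbimodule when the generating set is taken to consist of homogeneous elements. Both are handled by the conventions already fixed in the Preliminaries; no further machinery is required.
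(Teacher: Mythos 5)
Your proof is correct and takes essentially the same route as the paper: the forward direction applies Lemma~\ref{1.46} to the family of finitely generated graded subbimodules of $\mathsf{N}$ (the paper shows directly that every homogeneous $x\in\mathsf{N}$ lands in the maximal element, whereas you phrase it as a contradiction, but this is the same argument), and the converse is the standard union-of-an-ascending-chain argument, which is also what the paper does.
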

\begin{proof}
	First, suppose that $\mathsf{M}$ is weak $\mathsf{G}$-noetherian. Let $\mathsf{N}$ be any nonzero graded $\mathsf{G}$-subbimodule of $\mathsf{M}$. Consider the family $S_\mathsf{N}$ of all the finitely generated graded subbimodules of $\mathsf{N}$, which is obviously non empty. Since $\mathsf{N}$ is weak $\mathsf{G}$-noetherian (by Lemma \ref{1.46}), there is $\mathsf{W}\in S_\mathsf{N}$ maximal in $S_\mathsf{N}$, i.e. there is no $\widetilde{\mathsf{W}}\in S_\mathsf{N}$ such that $\mathsf{W}\subsetneq \widetilde{\mathsf{W}}$. Take any homogeneous element $x\in\mathsf{N}$, and consider $\widetilde{\mathsf{W}}=\mathsf{W}+_{\mathfrak{A}}\{x\}_{\widetilde{\mathfrak{A}}}$. Note that $\widetilde{\mathsf{W}}$ is finitely generated, and hence, $\widetilde{\mathsf{W}}\in S_\mathsf{N}$. Because $\mathsf{W}$ is maximal in $S_\mathsf{N}$, we conclude that $\mathsf{W}=\widetilde{\mathsf{W}}$, and thus $x\in \mathsf{W}$ for any $x\in\mathsf{N}$. Therefore, $\mathsf{N}$ is finitely generated.
	
	Now, assume that every $\mathsf{G}$-graded subbimodule of $\mathsf{M}$ is finitely generated. Consider an ascending chain of $\mathsf{G}$-subbimodules of $\mathsf{M}$ given by
	\begin{equation}\nonumber
	\mathsf{N}_1\subseteq\mathsf{N}_2\subseteq \cdots\subseteq\mathsf{N}_k\subseteq \cdots .
	\end{equation}
	Put $\mathsf{N}=\bigcup_{k\in\mathbb{N}}\mathsf{N}_k$, which is a $\mathsf{G}$-subbimodule of $\mathsf{M}$. By hypothesis, $\mathsf{N}$ is finitely generated, and hence, there exist $x_1,x_2, \dots, x_s\in\mathsf{N}$, homogeneous elements, such that $\mathsf{N}=_{\mathfrak{A}}\{x_1,x_2, \dots, x_s\}_{\widetilde{\mathfrak{A}}}$. Let $k_1,k_2, \dots, k_s\in\mathbb{N}$ such that $x_i\in\mathsf{N}_{k_i}$ for all $i=1,2,\dots,k$. Put $k_0=\mathsf{max}\{k_1,k_2, \dots, k_s\}$. We have that $x_1,x_2, \dots, x_s\in\mathsf{N}_{k_0}$, and thus $\mathsf{N}\subseteq\mathsf{N}_{k_0}$. It follows that $\mathsf{N}_l=\mathsf{N}_{k_0}$ for all $l\geq k_0$. Therefore, $\mathsf{M}$ is weak $\mathsf{G}$-noetherian.
\end{proof}

\begin{example}
Let $\mathbb{R}$ be the real field, and $\mathfrak{A}=\mathbb{R}[x_1,x_2,\dots]$ a polynomial algebra (with unit) in infinitely many (commutative) variables. We have that $\mathfrak{A}$ is a $\mathbb{Z}$-graded $\mathfrak{A}$-bimodule which is not $\mathbb{Z}$-noetherian, since the subbimodule $_{\mathfrak{A}} \{x_1,x_2,\dots\}_{\mathfrak{A}}$ of $\mathfrak{A}$ is graded and cannot be finitely generated.
\end{example}

We present below a result inspired by the Jordan-H\"{o}lder Theorem (see Theorem 7.3 in \cite{Rotm10}, p.526).

\begin{lemma}\label{1.51}
Let $\mathsf{M}$ be a $\mathsf{G}$-graded $(\mathfrak{A},\widetilde{\mathfrak{A}})$-bimodule, where $\mathfrak{A}$ and $\widetilde{\mathfrak{A}}$ are $\mathsf{G}$-graded algebras. Suppose that $\mathsf{M}$ has a weak graded composition series with length $r$. Then any graded subbimodule $\mathsf{N}$ of $\mathsf{M}$ has a weak graded composition series, and any weak graded composition series of $\mathsf{N}$ has length at most $r$.
%
\end{lemma}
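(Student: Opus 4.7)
The plan is to prove a slightly stronger claim by induction on $r$: \emph{every strictly increasing chain of graded subbimodules of $\mathsf{M}$ has length at most $r$}. The lemma then follows at once: a weak graded composition series of a graded subbimodule $\mathsf{N}$ of $\mathsf{M}$ is in particular a strict chain of graded subbimodules of $\mathsf{M}$, so it must have length $\leq r$; existence of such a series for $\mathsf{N}$ is obtained separately by intersecting the given composition series of $\mathsf{M}$ with $\mathsf{N}$.

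For the existence part, fix $\{0\} = \mathsf{M}_0 \subsetneq \mathsf{M}_1 \subsetneq \cdots \subsetneq \mathsf{M}_r = \mathsf{M}$ a weak graded composition series of $\mathsf{M}$, and set $\mathsf{N}_k = \mathsf{N} \cap \mathsf{M}_k$. Applying Lemma \ref{1.02}(ii) inside $\mathsf{M}_{k+1}$ with $\mathsf{N} \cap \mathsf{M}_{k+1}$ and $\mathsf{M}_k$ yields $\mathsf{N}_{k+1}/\mathsf{N}_k \cong_{\mathsf{G}} (\mathsf{N}_{k+1} + \mathsf{M}_k)/\mathsf{M}_k$, which is a graded subbimodule of the $\mathsf{G}$-irreducible bimodule $\mathsf{M}_{k+1}/\mathsf{M}_k$; hence each quotient $\mathsf{N}_{k+1}/\mathsf{N}_k$ is either zero or $\mathsf{G}$-irreducible. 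Deleting the trivial steps produces a weak graded composition series of $\mathsf{N}$ of length at most $r$.

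For the bound on arbitrary strict chains, I induct on $r$. The case $r = 0$ is immediate. For the step, assume the claim for bimodules carrying a composition series of length $r-1$, and let $\mathsf{L}_0 \subsetneq \mathsf{L}_1 \subsetneq \cdots \subsetneq \mathsf{L}_s$ be any strict chain of graded subbimodules of $\mathsf{M}$. Project onto the $\mathsf{G}$-irreducible quotient $\mathsf{M}/\mathsf{M}_{r-1}$: the induced chain of images $(\mathsf{L}_i + \mathsf{M}_{r-1})/\mathsf{M}_{r-1}$ can have at most one strict inclusion. I will argue that whenever the projected step fails to be strict, the intersection step $\mathsf{L}_i \cap \mathsf{M}_{r-1} \subsetneq \mathsf{L}_{i+1} \cap \mathsf{M}_{r-1}$ is strict. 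Indeed, if simultaneously $\mathsf{L}_i + \mathsf{M}_{r-1} = \mathsf{L}_{i+1} + \mathsf{M}_{r-1}$ and $\mathsf{L}_i \cap \mathsf{M}_{r-1} = \mathsf{L}_{i+1} \cap \mathsf{M}_{r-1}$, then any $x \in \mathsf{L}_{i+1}$ decomposes as $x = y + z$ with $y \in \mathsf{L}_i$ and $z \in \mathsf{M}_{r-1}$, so $z = x - y$ lies in $\mathsf{L}_{i+1} \cap \mathsf{M}_{r-1} = \mathsf{L}_i \cap \mathsf{M}_{r-1} \subseteq \mathsf{L}_i$, forcing $x \in \mathsf{L}_i$ and contradicting strictness. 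Hence at least $s-1$ of the $s$ strict inclusions produce strict inclusions in $\mathsf{M}_{r-1}$, giving a strict chain of graded subbimodules of $\mathsf{M}_{r-1}$ of length $\geq s-1$. Since $\mathsf{M}_{r-1}$ carries a composition series of length $r-1$, the inductive hypothesis yields $s - 1 \leq r - 1$, and therefore $s \leq r$.

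The main technical obstacle is the reflection argument in the third paragraph, which requires simultaneously controlling sums and intersections with $\mathsf{M}_{r-1}$ and relies on the $\mathsf{G}$-irreducibility of $\mathsf{M}/\mathsf{M}_{r-1}$; everything else reduces to routine applications of the graded isomorphism theorems (Lemma \ref{1.02}) and the observation that subbimodules of $\mathsf{N}$ are automatically subbimodules of $\mathsf{M}$, which is what lets the induction transfer between ambient bimodules.
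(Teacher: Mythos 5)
Your proof is correct. For the existence part (intersecting the given composition series of $\mathsf{M}$ with $\mathsf{N}$) you follow essentially the same route as the paper, and the identification $\mathsf{N}_{k+1}/\mathsf{N}_k \cong_{\mathsf{G}} (\mathsf{N}_{k+1}+\mathsf{M}_k)/\mathsf{M}_k$, which is either $\{0\}$ or all of the irreducible $\mathsf{M}_{k+1}/\mathsf{M}_k$, is exactly what is needed. For the length bound, however, you diverge in a useful way: the paper simply defers this to ``an adaptation of Theorem 1 (Jordan--H\"older) in Ribenboim,'' which in the usual development goes through Schreier refinement and the Zassenhaus lemma, whereas you prove the stronger, self-contained statement that \emph{every} strict chain of graded subbimodules of $\mathsf{M}$ has length at most $r$, by induction on $r$ using the modularity argument (if both the sum-chain and the intersection-chain with $\mathsf{M}_{r-1}$ stabilize at step $i$, then $\mathsf{L}_i = \mathsf{L}_{i+1}$). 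This avoids the refinement machinery entirely, is more elementary, and in fact yields slightly more than the lemma asks for (e.g.\ it immediately gives that all weak graded composition series of $\mathsf{M}$ have the same length, so one need not privilege $r = l(\mathsf{M})$ as the paper's proof implicitly does). The only thing to keep explicit in a final write-up is that sums and intersections of graded subbimodules are graded subbimodules, so that $(\mathsf{L}_i+\mathsf{M}_{r-1})/\mathsf{M}_{r-1}$ and $\mathsf{L}_i\cap\mathsf{M}_{r-1}$ are legitimately in the lattice the argument manipulates; with that noted, the inductive step is airtight.
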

\begin{proof}
Let $\{0\}=\mathsf{M}_0\subsetneq \mathsf{M}_1\subsetneq \cdots\subsetneq \mathsf{M}_r=\mathsf{M}$ be a weak graded composition series of $\mathsf{M}$ such that $r=l(\mathsf{M})$. Take $\mathsf{N}$ a graded subbimodule of $\mathsf{M}$. For each $i=0,1,\dots, r$, consider the graded subbimodule $\mathsf{N}_i=\mathsf{N}\cap\mathsf{M}_i$ of $\mathsf{N}$. Since $\mathsf{M}_{i+1}/\mathsf{M}_{i}$ is irreducible graded, it is easy to see that $\mathsf{N}_{i+1}/\mathsf{N}_{i}\cong_{\mathsf{G}} \mathsf{M}_{i+1}/\mathsf{M}_{i}$ or $\mathsf{N}_{i}=\mathsf{N}_{i+1}$ for all $i=0,1,\dots, r-1$. Hence, $\{\mathsf{N}_i\}_i$ ensures a weak graded composition series of $\mathsf{N}$, whose length $l(\mathsf{N})\leq r$. 


The proof of the second part of this lemma is an adaptation of the proof of Theorem 1 (Jordan-H\"{o}lder), \cite{Ribe69}, p.33.
\end{proof}




The previous lemma is a key tool to prove some results in this work. Next, we present an equivalence between a graded bimodule having a weak graded composition series and satisfying both weak chain conditions.

\begin{lemma}\label{1.41}
Let $\mathfrak{A}$ and $\widetilde{\mathfrak{A}}$ be two $\mathsf{G}$-graded algebras, and $\mathsf{M}$ a $\mathsf{G}$-graded $(\mathfrak{A},\widetilde{\mathfrak{A}})$-bimodule. Then $\mathsf{M}$ has a weak graded composition series iff $\mathsf{M}$ is weak $\mathsf{G}$-noetherian and weak $\mathsf{G}$-artinian.
\end{lemma}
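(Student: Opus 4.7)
The plan is to prove the two implications separately, both relying on Lemma \ref{1.11} (which propagates the weak chain conditions through graded subbimodules and quotients), together with the Correspondence Theorem (Lemma \ref{1.04}) and Lemma \ref{1.46} (existence of maximal/minimal members in families of graded subbimodules under the chain conditions). I would open the proof by recording the elementary observation that every $\mathsf{G}$-irreducible graded bimodule is both weak $\mathsf{G}$-artinian and weak $\mathsf{G}$-noetherian, since its only graded subbimodules are $\{0\}$ and itself.

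For the forward implication, I would assume $\mathsf{M}$ admits a weak graded composition series
\begin{equation}\nonumber
\{0\}=\mathsf{N}_{0}\subsetneq \mathsf{N}_{1}\subsetneq \cdots \subsetneq \mathsf{N}_{r}=\mathsf{M},
\end{equation}
and proceed by induction on $r$. The base case $r=1$ makes $\mathsf{M}$ irreducible, covered by the observation above. For the inductive step, the factor $\mathsf{N}_{r}/\mathsf{N}_{r-1}$ is irreducible graded, hence satisfies both weak chain conditions, while $\mathsf{N}_{r-1}$ satisfies them by the inductive hypothesis applied to its composition series $\mathsf{N}_{0}\subsetneq\cdots\subsetneq\mathsf{N}_{r-1}$. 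Lemma \ref{1.11} then promotes these conditions to $\mathsf{M}$.

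For the reverse implication, assuming $\mathsf{M}$ is weak $\mathsf{G}$-noetherian and weak $\mathsf{G}$-artinian, I would construct a weak graded composition series iteratively. Set $\mathsf{N}_{0}=\{0\}$. At stage $i$, if $\mathsf{N}_{i}\subsetneq\mathsf{M}$, the quotient $\mathsf{M}/\mathsf{N}_{i}$ is still weak $\mathsf{G}$-artinian by Lemma \ref{1.11}, so by Lemma \ref{1.46} the nonempty family of nonzero graded subbimodules of $\mathsf{M}/\mathsf{N}_{i}$ contains a minimal element, which is necessarily $\mathsf{G}$-irreducible. By the Correspondence Theorem (Lemma \ref{1.04}) this minimal subbimodule lifts to a graded subbimodule $\mathsf{N}_{i+1}$ of $\mathsf{M}$ with $\mathsf{N}_{i}\subsetneq\mathsf{N}_{i+1}$ and $\mathsf{N}_{i+1}/\mathsf{N}_{i}$ irreducible graded. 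This produces a strictly ascending chain of graded subbimodules of $\mathsf{M}$; since $\mathsf{M}$ is weak $\mathsf{G}$-noetherian, it must stabilize, which forces some $\mathsf{N}_{r}=\mathsf{M}$, yielding the desired weak graded composition series.

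The main obstacle is the use of \emph{both} weak chain conditions in the reverse direction: the artinian property is what supplies a minimal (and therefore irreducible) nonzero graded subbimodule in each successive quotient so that the factors $\mathsf{N}_{i+1}/\mathsf{N}_{i}$ are $\mathsf{G}$-irreducible, while the noetherian property is what guarantees that the ascending chain so produced terminates. A symmetric alternative would pick $\mathsf{N}_{i}$ as a $\mathsf{G}$-maximal proper subbimodule at each stage (using noetherianity to build the chain and artinianity to stop it), but the argument sketched above is the cleaner route given the lemmas already available.
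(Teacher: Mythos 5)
Your proof is correct, and in both directions it takes a genuinely different route from the paper's. For the forward implication, the paper first appeals to the Jordan--H\"older type Lemma \ref{1.51} (any graded subbimodule has a composition series of bounded length), deduces finite generation and hence noetherianity via Lemma \ref{1.45}, and then runs a separate contradiction argument to establish artinianity. Your induction on the length $r$, combined with a single invocation of Lemma \ref{1.11} on the pair $\mathsf{N}_{r-1}$ and $\mathsf{M}/\mathsf{N}_{r-1}$, handles both chain conditions simultaneously and is more economical. For the reverse implication, the paper builds a \emph{descending} chain from $\mathsf{M}$ downward, choosing at each stage a maximal proper graded subbimodule via Lemma \ref{1.46} (using noetherianity), and then invokes artinianity to terminate; you instead build an \emph{ascending} chain from $\{0\}$ upward by lifting a minimal nonzero graded subbimodule of each successive quotient through the Correspondence Theorem (using artinianity), and invoke noetherianity to terminate. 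These are exactly the two dual constructions, and the ``symmetric alternative'' you mention at the end is in fact what the paper does. Both work; the only shared implicit step (present equally in the paper's argument) is that the minimal/maximal choice at each stage does give an \emph{irreducible} factor in the sense of Definition \ref{1.87}, which carries the extra condition $\mathfrak{A}\mathsf{M}\widetilde{\mathfrak{A}}\neq\{0\}$ and is not literally guaranteed by minimality alone; since the paper makes the same silent assumption, this is not a defect of your proof relative to the source.
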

\begin{proof}
First, suppose that $\mathsf{M}$ has a weak graded composition series with length $r$. By Lemma \ref{1.51}, any graded subbimodule $\mathsf{W}$ of $\mathsf{M}$ also has a weak graded composition series with length at most $r$, and hence, $\mathsf{W}$ is finitely generated, and so, by Lemma \ref{1.45}, we have that $\mathsf{M}$ is weak $\mathsf{G}$-noetherian. Now, let us show that $\mathsf{M}$ is weak $\mathsf{G}$-artinian. Suppose that there exists a descending chain $\mathsf{N}_1\supsetneq \mathsf{N}_2\supsetneq \mathsf{N}_3\supsetneq \cdots$ of graded subbimodules of $\mathsf{M}$ which is infinite. Since $\mathsf{M}$ has a weak composition series, without loss of generality, we can assume that $\mathsf{N}_{i+1}$ is maximal in $\mathsf{N}_{i}$, for all $i\in\{1,2,3,\dots\}$ (see Lemma 1 (Schreier), \cite{Ribe69}, p.34). 
As any graded subbimodule of $\mathsf{M}$ has a weak graded composition series with length at most $r$, 
let $\{0\}=\widetilde{\mathsf{N}}_{r+s}\subsetneq\widetilde{\mathsf{N}}_{r+s-1}\subsetneq\cdots\subsetneq \widetilde{\mathsf{N}}_{r}=\mathsf{N}_{r}$ be a graded composition series of $\mathsf{N}_{r}$, where $s-1=l(\mathsf{N_r})\leq r$. Hence, we have that 
\begin{equation}\nonumber
\{0\}=\widetilde{\mathsf{N}}_{r+s}\subsetneq\widetilde{\mathsf{N}}_{r+s-1}\subsetneq\cdots\subsetneq \widetilde{\mathsf{N}}_{r}=\mathsf{N}_{r}\subsetneq\mathsf{N}_{r-1}\subsetneq\cdots\subsetneq \mathsf{N}_{2}\subsetneq\mathsf{N}_1
\end{equation}
is a weak graded composition series of $\mathsf{N}_1$, whose length is $r+s-1$. Thus, by Lemma \ref{1.51}, it follows that $r+s-1\leq r$, and hence, $s\leq1$. Consequently, either $\widetilde{\mathsf{N}}_{r+1}=\{0\}$, and so $\mathsf{N}_r$ is irreducible graded, or $\mathsf{N}_r=\{0\}$. Anyway, we conclude that $\mathsf{N}_s=\{0\}$ for all $s>r$. Therefore, $\mathsf{M}$ is weak $\mathsf{G}$-artinian.

%

Reciprocally, assume that $\mathsf{M}$ is weak $\mathsf{G}$-noetherian and weak $\mathsf{G}$-artinian. If $\mathsf{M}$ is not irreducible graded, consider $\mathsf{F}_1$ the family of all the graded subbimodules $\mathsf{N}$ of $\mathsf{M}$, with $\mathsf{N}\neq\mathsf{M}$. Since $\mathsf{M}$ is weak $\mathsf{G}$-noetherian, by Lemma \ref{1.46}, there is $\mathsf{N}_1\in \mathsf{F}_1$ maximal in $\mathsf{F}_1$. Note that $\mathsf{N}_1$ is maximal in $\mathsf{M}$. If $\mathsf{N}_1$ is irreducible graded, $\mathsf{M}=\mathsf{N}_0\supsetneq\mathsf{N}_1\supsetneq\{0\}$ is a weak graded composition series of $\mathsf{M}$. Otherwise, if $\mathsf{N}_1$ is not irreducible graded, consider $\mathsf{F}_2$ the family of all the graded subbimodules $\mathsf{P}$ of $\mathsf{N}_1$, with $\mathsf{P}\neq\mathsf{N}_1$. Again by Lemma \ref{1.46}, there is $\mathsf{N}_2\in \mathsf{F}_2$ maximal in $\mathsf{F}_2$. Note that $\mathsf{N}_2$ is maximal in $\mathsf{N}_1$. Again, if $\mathsf{N}_2$ is irreducible graded, $\mathsf{M}=\mathsf{N}_0\supsetneq\mathsf{N}_1\supsetneq\mathsf{N}_2\supsetneq\{0\}$ is a weak graded composition series of $\mathsf{M}$. Otherwise, the above process has to continue, and so, we can construct a descending chain
\begin{equation}\nonumber
\mathsf{M}=\mathsf{N}_0\supsetneq \mathsf{N}_1 \supsetneq \mathsf{N}_2 \supsetneq \mathsf{N}_3 \supsetneq\cdots 
\end{equation}
of graded subbimodules of $\mathsf{M}$, where $\mathsf{N}_{i+1}$ is maximal in $\mathsf{N}_i$, for all $i=0,1,\dots$. Since $\mathsf{M}$ is weak $\mathsf{G}$-artinian, there is $s_0\in\mathbb{N}$ such that $\mathsf{N}_s=\mathsf{N}_{s_0}$ for all $s\geq s_0$. Hence, we conclude that $\mathsf{N}_s=\{0\}$. Therefore, $\mathsf{M}=\mathsf{N}_0\supsetneq \mathsf{N}_1 \supsetneq \mathsf{N}_2 \supsetneq \mathsf{N}_3 \supsetneq\cdots \supsetneq \mathsf{N}_s=\{0\}$ is a weak graded composition series for $\mathsf{M}$.
\end{proof}
%
An immediate consequence of the previous lemma is that if $\mathsf{M}$ is a $\mathsf{G}$-graded $(\mathfrak{A},\widetilde{\mathfrak{A}})$-bimodule such that $\mathsf{M}$ has a weak graded composition series, then $\mathsf{M}$ is necessarily finitely generated. For the next result, recall that any $\mathsf{G}$-graded $(\mathfrak{A},\widetilde{\mathfrak{A}})$-bimodule can be generated by its homogeneous elements.

\begin{lemma}\label{1.71}
Let $\mathfrak{A}$ and $\widetilde{\mathfrak{A}}$ be two finite dimensional $\mathsf{G}$-graded $\mathbb{F}$-algebras, and $\mathsf{M}$ a $\mathsf{G}$-graded $(\mathfrak{A},\widetilde{\mathfrak{A}})$-bimodule. If $\mathsf{M}$ is finitely generated, then $\mathsf{M}$ is weak $\mathsf{G}$-artinian and weak $\mathsf{G}$-noetherian.
\end{lemma}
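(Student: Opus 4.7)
The plan is to reduce the statement to the fact that $\mathsf{M}$ is actually finite dimensional as an $\mathbb{F}$-vector space, since then any chain of subspaces (and in particular any chain of graded subbimodules) must stabilize, giving both weak chain conditions at once.

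First I would invoke the remark made just before the statement: any $\mathsf{G}$-graded $(\mathfrak{A},\widetilde{\mathfrak{A}})$-bimodule can be generated by its homogeneous elements. Hence, as $\mathsf{M}$ is finitely generated, there exist homogeneous elements $m_1,\dots,m_k\in\mathsf{M}$ such that
\begin{equation}\nonumber
\mathsf{M}={_\mathfrak{A}}\{m_1,\dots,m_k\}_{\widetilde{\mathfrak{A}}}=\sum_{i=1}^{k}{_\mathfrak{A}}\{m_i\}_{\widetilde{\mathfrak{A}}}\ .
\end{equation}
By definition of the subbimodule generated by an element,
\begin{equation}\nonumber
{_\mathfrak{A}}\{m_i\}_{\widetilde{\mathfrak{A}}}=\mathbb{F}m_i+\mathfrak{A}m_i+m_i\widetilde{\mathfrak{A}}+\mathfrak{A}m_i\widetilde{\mathfrak{A}}\ ,
\end{equation}
which is contained in the $\mathbb{F}$-linear span of $\{m_i\}\cup\{am_i,m_ib,am_ib:a\in\mathfrak{A},\,b\in\widetilde{\mathfrak{A}}\}$. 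Because $\mathfrak{A}$ and $\widetilde{\mathfrak{A}}$ are finite dimensional over $\mathbb{F}$, this span has $\mathbb{F}$-dimension at most $1+\mathsf{dim}_{\mathbb{F}}(\mathfrak{A})+\mathsf{dim}_{\mathbb{F}}(\widetilde{\mathfrak{A}})+\mathsf{dim}_{\mathbb{F}}(\mathfrak{A})\cdot\mathsf{dim}_{\mathbb{F}}(\widetilde{\mathfrak{A}})$. Summing over the $k$ generators yields $\mathsf{dim}_{\mathbb{F}}(\mathsf{M})<\infty$.

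Once finite dimensionality of $\mathsf{M}$ is established, any ascending or descending chain of graded subbimodules is in particular a chain of $\mathbb{F}$-subspaces of $\mathsf{M}$; every strict inclusion either strictly increases or strictly decreases $\mathbb{F}$-dimension, so no such strict chain can have more than $\mathsf{dim}_{\mathbb{F}}(\mathsf{M})+1$ terms. This immediately gives both the ACC and the DCC for graded subbimodules, i.e. $\mathsf{M}$ is weak $\mathsf{G}$-noetherian and weak $\mathsf{G}$-artinian.

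There is essentially no hard step here: the content of the lemma is just the passage from a finite generating set to finite $\mathbb{F}$-dimension, using the two-sided analogue of the one-sided fact. The only minor point to be careful about is that in the bimodule setting the generator span must include the four pieces $\mathbb{F}m_i$, $\mathfrak{A}m_i$, $m_i\widetilde{\mathfrak{A}}$, and $\mathfrak{A}m_i\widetilde{\mathfrak{A}}$ (since $\mathfrak{A}$ and $\widetilde{\mathfrak{A}}$ need not be unitary and $\mathsf{M}$ need not be unitary as a bimodule), so one should not carelessly write ${_\mathfrak{A}}\{m_i\}_{\widetilde{\mathfrak{A}}}=\mathfrak{A}m_i\widetilde{\mathfrak{A}}$; but all four summands are still finite dimensional, so the conclusion stands.
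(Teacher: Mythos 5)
Your proof is correct and follows essentially the same route as the paper: bound the $\mathbb{F}$-dimension of the subbimodule generated by each homogeneous generator using finite-dimensionality of $\mathfrak{A}$ and $\widetilde{\mathfrak{A}}$, and deduce both weak chain conditions from finite dimensionality. The only difference is cosmetic: the paper proves each $_{\mathfrak{A}}\{x_i\}_{\widetilde{\mathfrak{A}}}$ satisfies both weak chain conditions and then combines them via Lemma \ref{1.57}, whereas you observe directly that the whole of $\mathsf{M}$ is finite dimensional and so chain stabilization is immediate, which slightly shortens the argument without changing its substance.
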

\begin{proof}
Suppose that $S=\{x_1,\dots, x_n\}$ is a subset of homogeneous elements of $\mathsf{M}$ such that $\mathsf{M}=_{\mathfrak{A}}S_{\widetilde{\mathfrak{A}}}$. Since $_{\mathfrak{A}}S_{\widetilde{\mathfrak{A}}}=\sum_{i=1}^n {_{\mathfrak{A}}}\{x_i\}_{\widetilde{\mathfrak{A}}}$, to prove that $\mathsf{M}$ is weak $\mathsf{G}$-artinian and weak $\mathsf{G}$-noetherian is enough to prove that each $_{\mathfrak{A}}\{x_i\}_{\widetilde{\mathfrak{A}}}$ is weak $\mathsf{G}$-artinian and weak $\mathsf{G}$-noetherian (see Lemma \ref{1.57}). Fixed any $x\in S$, let us show that $_{\mathfrak{A}}\{x\}_{\widetilde{\mathfrak{A}}}$ is weak $\mathsf{G}$-artinian and weak $\mathsf{G}$-noetherian.
 
First, recall that $_{\mathfrak{A}}\{x\}_{\widetilde{\mathfrak{A}}}=\mathsf{span}_{\mathbb{F}}\{x\}+\mathfrak{A}x+x\widetilde{\mathfrak{A}}+\mathfrak{A}x\widetilde{\mathfrak{A}}$. Now, since $\mathfrak{A}$ and $\widetilde{\mathfrak{A}}$ are finite-dimensional $\mathbb{F}$-algebras, consider nonzero homogeneous elements $a_1,\dots,a_r\in\mathfrak{A}$ and $b_1,\dots,b_s\in\widetilde{\mathfrak{A}}$ such that $\mathfrak{A}=\mathsf{span}_\mathbb{F}\{a_1,\dots,a_r\}$ and $\widetilde{\mathfrak{A}}=\mathsf{span}_\mathbb{F}\{b_1,\dots,b_s\}$. It is easy to see that $\mathfrak{A}x=\mathsf{span}_\mathbb{F}\{a_1x,\dots,a_rx\}$ and $x\widetilde{\mathfrak{A}}=\mathsf{span}_\mathbb{F}\{xb_1,\dots,xb_s\}$, and hence, $\mathfrak{A}x$ and $x\widetilde{\mathfrak{A}}$ have finite dimensions. Finally, given any $m\in\mathfrak{A}x\widetilde{\mathfrak{A}}$, there exist $\hat{a}_1,\dots,\hat{a}_t\in\mathfrak{A}$ and $\hat{b}_1,\dots,\hat{b}_t\in\widetilde{\mathfrak{A}}$ such that $m=\hat{a}_1 x\hat{b}_1 + \cdots+\hat{a}_t x \hat{b}_t$. Let $\lambda_{11},\dots,\lambda_{tr},\gamma_{11},\dots,\gamma_{ts}\in\mathbb{F}$ such that $\hat{a}_i=\sum_{j=1}^r\lambda_{ij}a_j$ and $\hat{b}_l=\sum_{j=1}^s\gamma_{lj}b_j$, for $i,l=1,\dots,t$. Hence, 
\begin{equation}\nonumber
m=\sum_{i=1}^t \hat{a}_i x\hat{b}_i=\sum_{i=1}^t \left(\left(\sum_{j=1}^r\lambda_{ij}a_j\right) x\left(\sum_{l=1}^s\gamma_{ij}b_l\right)\right)=\sum_{i,j,l=1}^{t,r,s} \lambda_{ij}\gamma_{il}a_j xb_l \ ,
\end{equation}
and so, $m\in\mathsf{span}_\mathbb{F}\{a_j xb_l: j=1,\dots,r, \mbox{ and } l=1,\dots,s\}$. Consequently, $\mathfrak{A}x\widetilde{\mathfrak{A}}=\mathsf{span}_\mathbb{F}\{a_j xb_l: j=1,\dots,r, \mbox{ and } l=1,\dots,s\}$, and thus, $\mathfrak{A}x\widetilde{\mathfrak{A}}$ has a finite dimension. From this, it follows that 
\begin{equation}\nonumber
\mathsf{dim}_{\mathbb{F}}\left(_{\mathfrak{A}}\{x\}_{\widetilde{\mathfrak{A}}}\right)=\mathsf{dim}_{\mathbb{F}}\left(\mathsf{span}_{\mathbb{F}}\{x\}+\mathfrak{A}x+x\widetilde{\mathfrak{A}}+\mathfrak{A} x \widetilde{\mathfrak{A}}\right)\leq 1+r+s+rs<\infty \,
\end{equation}
i.e. $_{\mathfrak{A}}\{x\}_{\widetilde{\mathfrak{A}}}$ has a finite dimension. Therefore, we conclude that $_{\mathfrak{A}}\{x\}_{\widetilde{\mathfrak{A}}}$ is weak $\mathsf{G}$-artinian and weak $\mathsf{G}$-noetherian, and the result follows.
\end{proof}

By the previous result, given $\mathfrak{A}$ and $\widetilde{\mathfrak{A}}$  finite dimensional $\mathsf{G}$-graded $\mathbb{F}$-algebras and $\mathsf{M}$ a $\mathsf{G}$-graded $(\mathfrak{A},\widetilde{\mathfrak{A}})$-bimodule, for any finite set $S$ of homogeneous elements of $\mathsf{M}$, the graded subbimodules $\mathfrak{A}S\widetilde{\mathfrak{A}}$ and $\mathfrak{A}S+S\widetilde{\mathfrak{A}}+\mathfrak{A}S\widetilde{\mathfrak{A}}$ of $\mathsf{M}$ are weak $\mathsf{G}$-artinian and weak $\mathsf{G}$-noetherian.

In \cite{Vamo78}, Theorem 11, P. V\'amos proved that, if $\mathbb{K}$ is an extension field of $\mathbb{F}$ such that $\mathbb{K}$ is not finitely generated over $\mathbb{F}$, then the tensorial algebra $\mathbb{K}\otimes_\mathbb{F} \mathbb{K}$ is not noetherian. Therefore, Lemma \ref{1.71} is not true when the condition ``$\mathfrak{A}$ and $\widetilde{\mathfrak{A}}$ be two finite dimensional $\mathsf{G}$-graded $\mathbb{F}$-algebras'' is not assumed.

\begin{theorem}\label{1.17}
	Let $\mathsf{G}$ be a group, $\mathfrak{A}$ and $\widetilde{\mathfrak{A}}$ two finite dimensional $\mathsf{G}$-graded $\mathbb{F}$-algebras, and $\mathsf{M}$ a $\mathsf{G}$-graded $(\mathfrak{A},\widetilde{\mathfrak{A}})$-bimodule. Then the following conditions are equivalent:
	\begin{enumerate}
\item[i)] $\mathsf{M}$ is finitely generated (as $(\mathfrak{A},\widetilde{\mathfrak{A}})$-bimodule);

\item[ii)] $\mathsf{M}$ is weak $\mathsf{G}$-noetherian;

\item[iii)] $\mathsf{M}$ is weak $\mathsf{G}$-artinian and weak $\mathsf{G}$-noetherian;

\item[iv)] $\mathsf{M}$ has a weak graded composition series.
	\end{enumerate}
\end{theorem}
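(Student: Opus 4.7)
The plan is to assemble the theorem by chaining the preceding lemmas, since each implication is essentially already packaged for us. Concretely, I would establish the cycle (i) $\Rightarrow$ (iii) $\Rightarrow$ (ii) $\Rightarrow$ (i) together with the equivalence (iii) $\Leftrightarrow$ (iv); this covers all four conditions.

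For (i) $\Rightarrow$ (iii), I would invoke Lemma \ref{1.71} directly: the hypothesis that $\mathfrak{A}$ and $\widetilde{\mathfrak{A}}$ are finite dimensional together with finite generation of $\mathsf{M}$ gives both weak chain conditions. This is the only step where the finite-dimensionality of the acting algebras actually gets used; all the other implications would in fact go through without it. The implication (iii) $\Rightarrow$ (ii) is immediate, since weak $\mathsf{G}$-artinian and weak $\mathsf{G}$-noetherian obviously entails weak $\mathsf{G}$-noetherian alone.

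For (ii) $\Rightarrow$ (i), I would apply Lemma \ref{1.45}: if $\mathsf{M}$ is weak $\mathsf{G}$-noetherian, then every graded subbimodule of $\mathsf{M}$ is finitely generated, and in particular $\mathsf{M}$ itself is finitely generated as a $(\mathfrak{A},\widetilde{\mathfrak{A}})$-bimodule. Finally, the equivalence (iii) $\Leftrightarrow$ (iv) is precisely the content of Lemma \ref{1.41}.

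Since each step is a quotation of an earlier lemma, there is no real obstacle here; the technical work has already been done in Lemmas \ref{1.41}, \ref{1.45}, and \ref{1.71}. The only point that deserves emphasis is that the finite-dimensionality hypothesis is genuinely needed for (i) $\Rightarrow$ (iii) (V\'amos's example mentioned after Lemma \ref{1.71} shows that, without it, even a cyclic bimodule can fail to be noetherian), while the remaining implications hold under the weaker assumptions of those lemmas.
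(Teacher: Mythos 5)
Your proposal is correct and follows essentially the same route as the paper: the paper's proof also cites Lemmas \ref{1.45} and \ref{1.71} for the equivalence of (i), (ii), (iii), and Lemma \ref{1.41} for (iii) $\Leftrightarrow$ (iv). You merely spell out the implication cycle more explicitly and add a useful remark on where finite-dimensionality is actually used.
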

\begin{proof}
First, by Lemmas \ref{1.45} and \ref{1.71}, the items (i), (ii) and (iii) are equivalent. Already the equivalence of (iii) and (iv) follows from Lemma \ref{1.41}.
\end{proof}

\begin{corollary}\label{1.42}
Let $\mathsf{G}$ be a group, $\mathsf{H}_1$ and $\mathsf{H}_2$ two finite subgroups of $\mathsf{G}$, and $\mathbb{F}^{\sigma_1}[\mathsf{H}_1]$ and $\mathbb{F}^{\sigma_2}[\mathsf{H}_2]$ two twisted group algebras, with $\sigma_i\in\mathsf{Z}^2(\mathsf{H}_i,\mathbb{F}^*)$. For each $i=1,2$, consider $\mathfrak{B}_i=M_{n_i}(\mathbb{F}^{\sigma_i}[\mathsf{H}_i])$ the algebra of $n_i\times n_i$ matrices over $\mathbb{F}^{\sigma_i}[\mathsf{H}_i]$ with a canonical elementary $\mathsf{G}$-grading. Any finitely generated $\mathsf{G}$-graded $(\mathfrak{B}_1,\mathfrak{B}_2)$-bimodule satisfies both weak chain conditions.
\end{corollary}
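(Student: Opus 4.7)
The plan is to derive this as a direct corollary of Theorem~\ref{1.17}. The theorem says that when the acting algebras are finite dimensional $\mathsf{G}$-graded $\mathbb{F}$-algebras, a finitely generated $\mathsf{G}$-graded bimodule satisfies both weak chain conditions. So essentially all I need to verify is that the two matrix algebras $\mathfrak{B}_1$ and $\mathfrak{B}_2$ meet the hypothesis, and then invoke the theorem.

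First I would recall, as was noted earlier in the paper right before the statement of Theorem~\ref{teoBahtSehgZaic}, that for finite $\mathsf{H}$ one has $\mathsf{dim}_{\mathbb{F}}(\mathbb{F}^\sigma[\mathsf{H}])=|\mathsf{H}|<\infty$, and therefore $\mathsf{dim}_{\mathbb{F}}(\mathfrak{B}_i)=n_i^2 |\mathsf{H}_i|<\infty$ for $i=1,2$. Since each $\mathfrak{B}_i$ is moreover $\mathsf{G}$-graded by a canonical elementary grading by assumption, both $\mathfrak{B}_1$ and $\mathfrak{B}_2$ are finite dimensional $\mathsf{G}$-graded $\mathbb{F}$-algebras, which is precisely the hypothesis required by Theorem~\ref{1.17}.

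Given any finitely generated $\mathsf{G}$-graded $(\mathfrak{B}_1,\mathfrak{B}_2)$-bimodule $\mathsf{M}$, I would then apply Theorem~\ref{1.17} with $\mathfrak{A}=\mathfrak{B}_1$ and $\widetilde{\mathfrak{A}}=\mathfrak{B}_2$: the equivalence (i)$\Leftrightarrow$(iii) in that theorem yields that $\mathsf{M}$ is simultaneously weak $\mathsf{G}$-noetherian and weak $\mathsf{G}$-artinian, which is exactly what ``satisfies both weak chain conditions'' means in the sense defined earlier in $\S$\ref{preliminary}. There is no real obstacle here — the only thing to check is the finite dimensionality of the $\mathfrak{B}_i$'s, and this is immediate from the standard formula $\mathsf{dim}_{\mathbb{F}}M_n(\mathbb{F}^\sigma[\mathsf{H}])=n^2|\mathsf{H}|$ together with the finiteness of $\mathsf{H}_1$ and $\mathsf{H}_2$.
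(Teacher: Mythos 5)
Your proof is correct and coincides with the paper's own argument: verify that $\mathsf{dim}_{\mathbb{F}}(\mathfrak{B}_i)=n_i^2|\mathsf{H}_i|<\infty$ because each $\mathsf{H}_i$ is finite, then invoke Theorem~\ref{1.17} (the equivalence of finite generation with both weak chain conditions). Nothing is missing.
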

\begin{proof}
Indeed, for each $i=1,2$, we have that $\mathsf{dim}(\mathfrak{B}_i)=n_i^2|\mathsf{H}_i|<\infty$, since $\mathsf{H}_i$ is finite, and so, $\mathfrak{B}_1$ and $\mathfrak{B}_2$ have finite dimensions. By Theorem \ref{1.17}, the affirmation is immediate.
\end{proof}

\subsection{A decomposition for graded bimodules of finite dimension}\label{generlemagiam}
 We say that a left (resp. right) $\mathfrak{A}$-module $\mathsf{M}$ is a {\bf $0$-left $\mathfrak{A}$-module} (resp. {\bf $0$-right $\mathfrak{A}$-module}) if $\mathfrak{A}\mathsf{M}=\{0\}$ (resp. $\mathsf{M}\mathfrak{A}=\{0\}$); $\mathsf{M}$ is a left (resp. right) {\bf faithful} if $a \mathsf{M}=\{0\}$ (resp. $\mathsf{M}a=\{0\}$) to imply $a=0$, where $a\in\mathfrak{A}$. 
%
It is important to note that, for any $a=\sum_{g\in\mathsf{G}}a_g$ in $\mathfrak{A}$, ``$a\mathsf{M}=\{0\}$ implies $a=0$'' iff ``$a_g\mathsf{M}=\{0\}$ implies $a_g=0$, for any $g\in\mathsf{G}$''. The  ``right'' case is analogous. 

\begin{theorem}\label{1.27}
Let $\mathsf{G}$ be a group, $\mathfrak{A}$ and $\widetilde{\mathfrak{A}}$ two unitary algebras with $\mathsf{G}$-gradings, and $\mathsf{M}$ a $\mathsf{G}$-graded $(\mathfrak{A},\widetilde{\mathfrak{A}})$-bimodule, not necessarily unitary. If $\mathsf{M}$ has a finite dimension (as vector space), then $\mathsf{M}$ can be decomposed as 
	\begin{equation}\nonumber
\mathsf{M}=\mathsf{M}_{00}\oplus \mathsf{M}_{10}\oplus \mathsf{M}_{01}\oplus \mathsf{M}_{11} \ ,
	\end{equation}
where $\mathsf{M}_{ij}$'s are $\mathsf{G}$-graded $(\mathfrak{A},\widetilde{\mathfrak{A}})$-bimodules such that:
	\begin{itemize}
		\item[i)] for $r=0,1$, $\mathsf{M}_{0r}$ is a $0$-left $\mathfrak{A}$-module and $\mathsf{M}_{1r}$ is a unitary left $\mathsf{G}$-graded $\mathfrak{A}$-module;
		\item[ii)] for $s=0,1$, $\mathsf{M}_{s0}$ is a $0$-right $\widetilde{\mathfrak{A}}$-module and $\mathsf{M}_{s1}$ is a unitary right $\mathsf{G}$-graded $\widetilde{\mathfrak{A}}$-module;
		\item[iii)] $\mathsf{M}_{11}$ is a unitary $\mathsf{G}$-graded $(\mathfrak{A},\widetilde{\mathfrak{A}})$-bimodule.
	\end{itemize}
In addition, if $\mathfrak{A}$ (resp. $\widetilde{\mathfrak{A}}$) is simple graded, then $\mathsf{M}_{1r}$ (resp. $\mathsf{M}_{r1}$) is faithful on the left (resp. on the right), for $r=0,1$.
\end{theorem}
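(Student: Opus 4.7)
The plan is to exploit the Peirce-type decomposition induced by the two identity elements $1_{\mathfrak{A}}$ and $1_{\widetilde{\mathfrak{A}}}$. First I would record the standard fact that in any unital $\mathsf{G}$-graded algebra the identity must be homogeneous of degree $e$: writing $1=\sum_{g}x_g$ with $x_g\in\mathfrak{A}_g$, for any $r\in\mathfrak{A}_h$ one has $r=1\cdot r=\sum_g x_g r$ with $x_g r\in\mathfrak{A}_{gh}$, forcing $x_g r=0$ for $g\neq e$; running $r$ through a homogeneous spanning set of $\mathfrak{A}$ yields $x_g=0$ for every $g\neq e$. Consequently, left multiplication $L:\mathsf{M}\to\mathsf{M}$ by $1_{\mathfrak{A}}$ and right multiplication $R:\mathsf{M}\to\mathsf{M}$ by $1_{\widetilde{\mathfrak{A}}}$ are commuting idempotent $\mathbb{F}$-linear endomorphisms (commutation is immediate from associativity of the bimodule action) that preserve every homogeneous component of $\mathsf{M}$.

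With $L,R$ in hand, I would set
\[
\mathsf{M}_{00}=\ker L\cap\ker R,\ \ \mathsf{M}_{10}=\mathrm{Im}\,L\cap\ker R,\ \ \mathsf{M}_{01}=\ker L\cap\mathrm{Im}\,R,\ \ \mathsf{M}_{11}=\mathrm{Im}\,L\cap\mathrm{Im}\,R.
\]
The four pairwise orthogonal idempotents $(I-L)(I-R)$, $L(I-R)$, $(I-L)R$, $LR$ sum to $I$, so $\mathsf{M}=\mathsf{M}_{00}\oplus\mathsf{M}_{10}\oplus\mathsf{M}_{01}\oplus\mathsf{M}_{11}$ as vector spaces, and each summand is $\mathsf{G}$-graded because $L$ and $R$ preserve the grading. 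To obtain the bimodule structure and properties (i)--(iii), the key observation is that whenever $1_{\mathfrak{A}}m=0$ and $a\in\mathfrak{A}$ one has $am=(a\cdot 1_{\mathfrak{A}})m=a(1_{\mathfrak{A}}m)=0$, so $\ker L$ is a $0$-left $\mathfrak{A}$-module; symmetrically $\ker R$ is a $0$-right $\widetilde{\mathfrak{A}}$-module. Conversely, $\mathrm{Im}\,L=\{m:1_{\mathfrak{A}}m=m\}$ is closed under left multiplication by $\mathfrak{A}$ since $1_{\mathfrak{A}}(am)=am$ automatically, so it is a unitary left $\mathfrak{A}$-module; analogously for $\mathrm{Im}\,R$. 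Combining these four facts shows that each $\mathsf{M}_{ij}$ is a graded subbimodule and simultaneously yields the zero-action/unitary dichotomy required in (i), (ii), (iii).

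For the last assertion, assume $\mathfrak{A}$ is $\mathsf{G}$-simple and consider the left annihilator $\mathrm{Ann}_{\mathfrak{A}}(\mathsf{M}_{1r})=\{a\in\mathfrak{A}:a\mathsf{M}_{1r}=\{0\}\}$. It is clearly a left ideal; it is a right ideal because $\mathsf{M}_{1r}$ is a subbimodule, so for $a\in\mathrm{Ann}_{\mathfrak{A}}(\mathsf{M}_{1r})$ and $b\in\mathfrak{A}$ one has $(ab)m=a(bm)=0$ for every $m\in\mathsf{M}_{1r}$. It is graded because $\mathsf{M}_{1r}$ has a homogeneous basis: if $a=\sum_g a_g$ annihilates $\mathsf{M}_{1r}$, then for each homogeneous $m\in\mathsf{M}_{1r}$ of degree $h$ the sum $\sum_g a_g m$ lies in $\bigoplus_g\mathsf{M}_{gh}$ and vanishes, forcing every $a_g m=0$. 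The $\mathsf{G}$-simplicity of $\mathfrak{A}$ then gives $\mathrm{Ann}_{\mathfrak{A}}(\mathsf{M}_{1r})=\{0\}$ or $\mathrm{Ann}_{\mathfrak{A}}(\mathsf{M}_{1r})=\mathfrak{A}$; the latter would contain $1_{\mathfrak{A}}$, which acts as the identity on $\mathsf{M}_{1r}$, so the second alternative can only hold in the degenerate case $\mathsf{M}_{1r}=\{0\}$. The symmetric argument handles $\mathsf{M}_{r1}$ when $\widetilde{\mathfrak{A}}$ is $\mathsf{G}$-simple. The finite-dimensionality of $\mathsf{M}$ plays no essential role in the decomposition itself (it is simply the framework the paper needs later); the only genuine bookkeeping is the case-by-case verification that the $\mathsf{M}_{ij}$ are subbimodules, made routine by the identity $am=a(1_{\mathfrak{A}}m)$ and its right-hand analog.
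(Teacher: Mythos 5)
Your proof is correct and follows the same Peirce-type decomposition via the operators $L(x)=1_{\mathfrak{A}}x$ and $R(x)=x\,1_{\widetilde{\mathfrak{A}}}$ that the paper uses. The one place you genuinely improve on the paper's exposition is in how the direct-sum decomposition is obtained: the paper diagonalizes $L$ and $R$ by arguing their minimal polynomials divide $t^2-t$ and invokes the Rank--Nullity theorem, which is why it carries the finite-dimensionality hypothesis, whereas you observe directly that the four commuting projections $(I-L)(I-R)$, $L(I-R)$, $(I-L)R$, $LR$ are pairwise orthogonal idempotents summing to the identity. This gives $\mathsf{M}=\mathsf{M}_{00}\oplus\mathsf{M}_{10}\oplus\mathsf{M}_{01}\oplus\mathsf{M}_{11}$ with no dimension restriction at all, so the hypothesis $\dim_{\mathbb{F}}\mathsf{M}<\infty$ is indeed superfluous for this theorem (it is needed only for later results in the paper, as you correctly note). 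Your faithfulness argument via the graded two-sided annihilator ideal $\mathrm{Ann}_{\mathfrak{A}}(\mathsf{M}_{1r})$ is an equivalent reformulation of the paper's argument, which instead picks a nonzero homogeneous $a_0$ in the annihilator, notes $\mathfrak{A}a_0\mathfrak{A}=\mathfrak{A}$ by graded simplicity, and derives $\mathfrak{A}\mathsf{M}_{1r}=\{0\}$; both hinge on the same point that the annihilator is a proper graded ideal.
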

\begin{proof}
Let $\mathfrak{i}$ and $\hat{\mathfrak{i}}$ be the units of $\mathfrak{A}$ and $\widetilde{\mathfrak{A}}$, respectively, and consider the applications $L, R:\mathsf{M} \longrightarrow \mathsf{M}$ defined by $L(x)=\mathfrak{i} x$ and $R(x)=x \hat{\mathfrak{i}}$ for all $x, y \in \mathsf{M}$, respectively. Note that $L$ and $R$ are homogeneous homomorphisms of $\mathsf{G}$-graded $(\mathfrak{A},\widetilde{\mathfrak{A}})$-bimodules such that $L^2 = L$ and $R^2 = R$, since $\mathfrak{i}^2=\mathfrak{i}\in\mathfrak{A}_{e}$ and $\hat{\mathfrak{i}}^2=\hat{\mathfrak{i}}\in\widetilde{\mathfrak{A}}_{e}$. Hence, $\mathsf{ker} (L)$, $\mathsf{im} (L)$, $\mathsf{ker} (R)$ and $\mathsf{im} (R)$ are $\mathsf{G}$-graded $(\mathfrak{A},\widetilde{\mathfrak{A}})$-bimodules. On the other hand, we conclude that $0,1\in\mathbb{F}$ are the only eigenvalues of $L$ and $R$. So, we have that $L$ and $R$ are diagonalizable, since their minimal polynomials can be written as the product of linear factors (see \cite{HoffKunz71}, Theorem 6.4.6, p.204). Notice that $R\circ L= L\circ R$, and hence, it follows that $\mathsf{ker} (R)$ and $\mathsf{im} (R)$ are invariant by $L$, and $\mathsf{ker} (L)$ and $\mathsf{im} (L)$ are invariant by $R$. From this, it is easy to check that $V_0^R=\mathsf{ker} (R)$ and $V_0^L=\mathsf{ker} (L)$ are the eigenspaces of $R$ and $L$ associated with $0$, respectively, and $V_1^R=\mathsf{im} (R)$ and $V_1^L=\mathsf{im} (L)$ are the eigenspaces of $R$ and $L$ associated with $1$, respectively.

Put $\mathsf{M}_{00}=V_0^R\cap V_0^L$, $\mathsf{M}_{10}=V_0^R\cap V_1^L$, $\mathsf{M}_{01}=V_1^R\cap V_0^L$ and $\mathsf{M}_{11}=V_1^R\cap V_1^L$. Note that the $\mathsf{M}_{rs}$'s are $\mathsf{G}$-graded $(\mathfrak{A},\widetilde{\mathfrak{A}})$-bimodules. Let us show that  
\begin{equation}\label{1.28}
\mathsf{M}=\mathsf{M}_{00}\oplus \mathsf{M}_{01}\oplus \mathsf{M}_{10}\oplus \mathsf{M}_{11} \ .
\end{equation}

Let $\widetilde{\mathsf{M}}=\sum_{r,s=0,1}\mathsf{M}_{rs}$. We have that
\begin{align*}
\mathsf{M}_{10} &=\{x\in \mathsf{M} : x=\mathfrak{i} x, x\hat{\mathfrak{i}} =0\} \ , \
\mathsf{M}_{01} =\{x\in \mathsf{M} : x=x\hat{\mathfrak{i}}, \mathfrak{i} x=0\} \ , \\
\mathsf{M}_{11} &=\{ x\in \mathsf{M} : x=\mathfrak{i} x\hat{\mathfrak{i}}\} \ , \
\mathsf{M}_{00} =\{x\in \mathsf{M} : \mathfrak{i} x=x\hat{\mathfrak{i}} =0\} \ .
\end{align*}
From this, if $x_{10}+x_{01}+x_{11}+x_{00}=0$, where $x_{rs}\in\mathsf{M}_{rs}$, then
\begin{align*}
0 &=\mathfrak{i}(x_{10}+x_{01}+x_{11}+x_{00})\hat{\mathfrak{i}}= \mathfrak{i} x_{11}\hat{\mathfrak{i}}=x_{11} \ , \\
0 &=\mathfrak{i}(x_{10}+x_{01}+x_{11}+x_{00})=  \mathfrak{i} x_{10} + \mathfrak{i} x_{11}= x_{10} + x_{11} \ , \\
0 &=(x_{10}+x_{01}+x_{11}+x_{00})\hat{\mathfrak{i}}= x_{01}\hat{\mathfrak{i}} + x_{11}\hat{\mathfrak{i}}= x_{01} + x_{11}  \ ,
\end{align*}
and hence, $x_{11}=x_{10}=x_{01}=x_{00}=0$, and thus, $\widetilde{\mathsf{M}}=\bigoplus_{r,s=0,1}\mathsf{M}_{rs}$\ . On the other hand, by Rank-Nullity Theorem (see \cite{HoffKunz71}, Theorem 3.1.2, p.71), it follows that $\mathsf{M}=\mathsf{ker}(R) \oplus \mathsf{im} (R) = \mathsf{ker}(L) \oplus \mathsf{im} (L)$, since $\mathsf{dim}_\mathbb{F} \mathsf{M}<\infty$ and $\mathsf{ker}(R) \cap \mathsf{im} (R) = \mathsf{ker}(L) \cap \mathsf{im} (L)=\{0\}$. Take any $x\in\mathsf{M}$. There exist $x_{0}\in\mathsf{ker}(L)$ and $x_{1}\in \mathsf{im} (L)$ such that $x=x_{0}+x_{1}$, and hence, there exist $x_{00},x_{10}\in\mathsf{ker}(R)$ and $x_{01},x_{11}\in \mathsf{im} (R)$ such that $x_{0}=x_{00}+x_{01}$ and $x_{1}=x_{10}+x_{11}$. Notice that $x_{00}, x_{01}\in\mathsf{ker}(L)$, since $0=\mathfrak{i} x_{0}=\mathfrak{i} x_{00}+\mathfrak{i} x_{01}$ and $\mathsf{ker}(L) \cap \mathsf{im} (L)=\{0\}$. Similarly, $x_{10}, x_{11}\in\mathsf{im}(L)$, since $ x_{10}+ x_{11}=x_{1}=\mathfrak{i} x_{1}=\mathfrak{i} x_{10}+\mathfrak{i} x_{11}$, and so $x_{10}-\mathfrak{i} x_{10}=\mathfrak{i} x_{11}- x_{11}$, but $\mathsf{ker}(R) \cap \mathsf{im} (R) = \{0\}$. Therefore, 
	\begin{equation}\nonumber
x=x_{00}+x_{01}+x_{10}+x_{11}\in \mathsf{M}_{00}\oplus \mathsf{M}_{01}\oplus \mathsf{M}_{10}\oplus \mathsf{M}_{11}
\ .
	\end{equation}
This finishes the proof of (\ref{1.28}).

Now, to show that $\mathsf{M}_{rs}$'s satisfy items {\it i)}, {\it ii)} and \textit{iii)}, it is enough to see that, since $\mathsf{M}_{rs}$'s are $\mathsf{G}$-graded $(\mathfrak{A},\widetilde{\mathfrak{A}})$-bimodules and $\mathfrak{i}$ and $\hat{\mathfrak{i}}$ are the units of $\mathfrak{A}$ and $\widetilde{\mathfrak{A}}$, respectively, $a=a\mathfrak{i}=\mathfrak{i}a=\mathfrak{i}a\mathfrak{i}$ and $b=\hat{\mathfrak{i}} b=b\hat{\mathfrak{i}}=\hat{\mathfrak{i}}b\hat{\mathfrak{i}}$ for any $a\in\mathfrak{A}$ and $b\in\widetilde{\mathfrak{A}}$, and so $ax=(a\mathfrak{i})x=a(\mathfrak{i} x)$ and $xb=x(\hat{\mathfrak{i}} b)=(x\hat{\mathfrak{i}})b$ for any $a\in\mathfrak{A}$, $b\in\widetilde{\mathfrak{A}}$ and $x\in\mathsf{M}$. Thus, the items {\it i)}, {\it ii)} and {\it iii)} follow from definition of $\mathsf{M}_{rs}$'s. 

Finally, suppose $\mathfrak{A}$ (resp. $\widetilde{\mathfrak{A}}$) simple graded. To obtain a contradiction, suppose that there exists some nonzero homogeneous element $a_0\in\mathfrak{A}$ (resp. $a_0\in\widetilde{\mathfrak{A}}$) such that $a_0\mathsf{M}_{1r}=\{0\}$ (resp. $\mathsf{M}_{r1}a_0=\{0\}$). By the simplicity of $\mathfrak{A}$ (resp. $\widetilde{\mathfrak{A}}$), we have that $\mathfrak{A}=\mathfrak{A}a_0\mathfrak{A}$ (resp. $\widetilde{\mathfrak{A}}=\widetilde{\mathfrak{A}}a\widetilde{\mathfrak{A}}$), and hence, $\mathfrak{A}\mathsf{M}_{1r}=(\mathfrak{A}a_0\mathfrak{A})\mathsf{M}_{1r}\subseteq(\mathfrak{A}a_0)\mathsf{M}_{1r}=\mathfrak{A}(a_0\mathsf{M}_{1r})=\{0\}$ (resp. $\mathsf{M}_{r1}\widetilde{\mathfrak{A}}=\mathsf{M}_{r1}(\widetilde{\mathfrak{A}}a_0\widetilde{\mathfrak{A}})\subseteq\mathsf{M}_{r1}(a_0\widetilde{\mathfrak{A}})=(\mathsf{M}_{r1}a_0)\widetilde{\mathfrak{A}}=\{0\}$), which contradicts the items \textit{i)} and \textit{ii)} of this theorem. Therefore, we conclude that $\mathsf{M}_{1r}$ (resp. $\mathsf{M}_{r1}$) is faithful on the left (resp. on the right), for $i=0,1$.
 \end{proof}

By above theorem (and its proof), it is possible to prove that if $\mathfrak{A}$ and $\widetilde{\mathfrak{A}}$ are simple graded, then $\mathsf{M}_{11}$ is a ``\textbf{faithful bimodule}'', i.e. given $a\in\mathfrak{A}$ and $b\in\widetilde{\mathfrak{A}}$, $a\mathsf{M}b=\{0\}$ iff $a=0$ or $b=0$.

The next result shows the importance of studying and describing the subbimodule $\mathsf{M}_{11}$ of $\mathsf{M}$.

\begin{corollary}\label{1.31}
Assume the same assumptions of Theorem \ref{1.27}. If $\mathsf{M}$ is a unitary $(\mathfrak{A}, \widetilde{\mathfrak{A}})$-bimodule, then $\mathsf{M}=\mathsf{M}_{11}$.
\end{corollary}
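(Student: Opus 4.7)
The plan is to use the decomposition from Theorem \ref{1.27} and exploit that a unitary bimodule forces $\mathfrak{i}m = m = m\hat{\mathfrak{i}}$ for every $m\in\mathsf{M}$, where $\mathfrak{i}$ and $\hat{\mathfrak{i}}$ are the units of $\mathfrak{A}$ and $\widetilde{\mathfrak{A}}$. Recall from the proof of Theorem \ref{1.27} the characterizations
\begin{align*}
\mathsf{M}_{00} &= \{x\in\mathsf{M} : \mathfrak{i}x = 0,\ x\hat{\mathfrak{i}} = 0\}, & \mathsf{M}_{10} &= \{x\in\mathsf{M} : \mathfrak{i}x = x,\ x\hat{\mathfrak{i}} = 0\}, \\
\mathsf{M}_{01} &= \{x\in\mathsf{M} : \mathfrak{i}x = 0,\ x\hat{\mathfrak{i}} = x\}, & \mathsf{M}_{11} &= \{x\in\mathsf{M} : \mathfrak{i}x\hat{\mathfrak{i}} = x\}.
\end{align*}

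First I would take an arbitrary $m\in\mathsf{M}$ and use the direct sum decomposition to write $m = m_{00} + m_{10} + m_{01} + m_{11}$ with $m_{rs}\in\mathsf{M}_{rs}$. Next I would apply $L(x)=\mathfrak{i}x$ to both sides: on the left we get $m$ (by unitarity) and on the right, using the characterizations above, we get $m_{10} + m_{11}$. Since the decomposition is a direct sum, comparing summands yields $m_{00} = 0$ and $m_{01} = 0$. Analogously, applying $R(x) = x\hat{\mathfrak{i}}$ gives $m = m_{01} + m_{11}$, forcing $m_{00} = 0$ and $m_{10} = 0$. Combining both conclusions, $m = m_{11}\in\mathsf{M}_{11}$, so $\mathsf{M}\subseteq\mathsf{M}_{11}$, and the reverse inclusion is automatic.

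There is essentially no obstacle here; the whole content is packaged in Theorem \ref{1.27}, and the corollary is just bookkeeping on eigenspace decompositions of the commuting idempotent operators $L$ and $R$. The only thing to be careful about is invoking the direct-sum property of $\mathsf{M}_{00}\oplus\mathsf{M}_{10}\oplus\mathsf{M}_{01}\oplus\mathsf{M}_{11}$ to separate components after applying $L$ and $R$, rather than trying to argue componentwise from scratch.
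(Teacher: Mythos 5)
Your proof is correct and follows essentially the same approach as the paper's: both exploit that unitarity forces $x = \mathfrak{i}x = x\hat{\mathfrak{i}} = \mathfrak{i}x\hat{\mathfrak{i}}$, which annihilates the three complementary summands. The paper compresses this by applying $\mathfrak{i}(\cdot)\hat{\mathfrak{i}}$ to $\mathsf{M}_{00}\oplus\mathsf{M}_{01}\oplus\mathsf{M}_{10}$ all at once (each piece is killed because at least one of $\mathfrak{i}x$, $x\hat{\mathfrak{i}}$ vanishes there), whereas you apply $L$ and $R$ separately and read off component equalities from the direct sum; this is the same argument with the bookkeeping unfolded a bit more.
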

\begin{proof}
Supposing that $\mathsf{M}$ is a unitary bimodule, we have that $x=\mathfrak{i} x= x\hat{\mathfrak{i}}=\mathfrak{i} x\hat{\mathfrak{i}}$ for any $x\in\mathsf{M}$, and thus,
	\begin{equation}\nonumber
\mathsf{J}_{00}\oplus \mathsf{J}_{01}\oplus \mathsf{J}_{10}=\mathfrak{i} \left( \mathsf{J}_{00}\oplus \mathsf{J}_{01}\oplus \mathsf{J}_{10} \right)\hat{\mathfrak{i}} =\{0\}
\ .
	\end{equation}
Therefore, the result follows.
\end{proof}

Motivated by Theorem \ref{1.27} and Corollary \ref{1.31}, in the next sections, let us study unitary bimodules (over (semi)simple algebras).

A particular case of the previous theorem is given when $\mathsf{M}$ is a graded algebra, in special, $\mathsf{M}$ is an ideal of the algebra $\mathfrak{A}$. To finally this section, let us present some results in this line.

\begin{corollary}\label{1.25}
Let $\mathfrak{A}$ be an algebra with a $\mathsf{G}$-grading, $\mathfrak{B}$ a graded subalgebra and $\mathsf{N}$ a graded (two-sided) ideal, both of $\mathfrak{A}$. Suppose that $\mathfrak{B}$ is unitary and $\mathsf{N}$ has a finite dimension. Then $\mathsf{N}=\bigoplus_{i,j=0,1} \mathsf{N}_{ij}$, where the $\mathsf{N}_{ij}$'s are as in Theorem \ref{1.27} and satisfy $\mathsf{N}_{rq}\mathsf{N}_{qs}\subseteq \mathsf{N}_{rs}$ for all $r,p,q,s\in\{0,1\}$, with $\mathsf{N}_{rp}\mathsf{N}_{qs}=\{0\}$ for $p\neq q$.
\end{corollary}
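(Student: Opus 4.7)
The plan is to apply Theorem \ref{1.27} directly, taking $\mathfrak{A}=\widetilde{\mathfrak{A}}:=\mathfrak{B}$ and $\mathsf{M}:=\mathsf{N}$. First, I would observe that $\mathsf{N}$ is naturally a $\mathsf{G}$-graded $\mathfrak{B}$-bimodule: since $\mathsf{N}$ is a two-sided ideal of $\mathfrak{A}$ and $\mathfrak{B}$ is a graded subalgebra of $\mathfrak{A}$, we have $\mathfrak{B}\mathsf{N}\subseteq \mathfrak{A}\mathsf{N}\subseteq \mathsf{N}$ and $\mathsf{N}\mathfrak{B}\subseteq \mathsf{N}\mathfrak{A}\subseteq \mathsf{N}$, while the $\mathsf{G}$-grading that $\mathsf{N}$ inherits from $\mathfrak{A}$ is compatible with that of $\mathfrak{B}$. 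Since $\mathfrak{B}$ is unitary and $\mathsf{N}$ is finite-dimensional, Theorem \ref{1.27} yields the decomposition $\mathsf{N}=\mathsf{N}_{00}\oplus \mathsf{N}_{10}\oplus \mathsf{N}_{01}\oplus \mathsf{N}_{11}$ together with the left/right module structure on each summand described in items \textit{i)}--\textit{iii)} of that theorem.

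Next, I would verify the multiplication rules using the explicit characterization of the summands obtained in the proof of Theorem \ref{1.27}. Let $\mathfrak{i}\in\mathfrak{B}_e$ denote the unit of $\mathfrak{B}$. Then an element $x\in\mathsf{N}$ belongs to $\mathsf{N}_{rs}$ exactly when $\mathfrak{i}x=\delta_{r,1}\,x$ and $x\mathfrak{i}=\delta_{s,1}\,x$, where $\delta$ is the Kronecker symbol. For any $x\in\mathsf{N}_{rp}$ and $y\in\mathsf{N}_{qs}$ the product $xy$ lies in $\mathsf{N}$, because $\mathsf{N}\cdot\mathsf{N}\subseteq \mathfrak{A}\cdot\mathsf{N}\subseteq \mathsf{N}$.

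Now suppose $p\neq q$. If $p=1$ and $q=0$ then, by associativity in $\mathfrak{A}$, $xy=(x\mathfrak{i})y=x(\mathfrak{i}y)=x\cdot 0=0$; if $p=0$ and $q=1$ then $xy=x(\mathfrak{i}y)=(x\mathfrak{i})y=0\cdot y=0$. Hence $\mathsf{N}_{rp}\mathsf{N}_{qs}=\{0\}$ whenever $p\neq q$. If instead $p=q$, then $\mathfrak{i}(xy)=(\mathfrak{i}x)y=\delta_{r,1}\,xy$ and $(xy)\mathfrak{i}=x(y\mathfrak{i})=\delta_{s,1}\,xy$, so $xy\in\mathsf{N}_{rs}$, giving the inclusion $\mathsf{N}_{rp}\mathsf{N}_{ps}\subseteq \mathsf{N}_{rs}$.

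There is essentially no serious obstacle here: the corollary follows from Theorem \ref{1.27} together with a short calculation. The only point that deserves attention is to keep track of the fact that we use the unit $\mathfrak{i}$ of $\mathfrak{B}$ (not an element of $\mathfrak{A}$) and that the crucial identity $x(\mathfrak{i}y)=(x\mathfrak{i})y$ holds by the associativity of $\mathfrak{A}$ itself, so no additional bimodule axiom is needed.
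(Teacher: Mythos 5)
Your proof is correct and follows essentially the same route as the paper: apply Theorem \ref{1.27} to $\mathsf{N}$ as a $\mathsf{G}$-graded $\mathfrak{B}$-bimodule, then use the unit $\mathfrak{i}$ of $\mathfrak{B}$ together with associativity in $\mathfrak{A}$, via the identity $(x\mathfrak{i})y=x(\mathfrak{i}y)$, to derive the multiplication rules. Your Kronecker-delta formulation simply condenses the paper's case-by-case listing of products $\mathsf{N}_{rp}\mathsf{N}_{qs}$ into a uniform computation.
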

\begin{proof}
Obviously $\mathsf{N}$ is a $\mathsf{G}$-graded $\mathfrak{B}$-bimodule, and so we can write $\mathsf{N}=\bigoplus_{i,j=0,1} \mathsf{N}_{ij}$ as in Theorem \ref{1.27}. Now, being $\mathfrak{i}$ be the unit of $\mathfrak{B}$, since $\mathfrak{A}$ has an associative product, we have that $(x\mathfrak{i})y=x(\mathfrak{i}y)$ for any $x,y\in\mathfrak{A}$, and hence, from the definitions of the $\mathsf{N}_{ij}$'s it follows that
	\begin{equation}\nonumber
		\begin{split}
\mathsf{N}_{00}\mathsf{N}_{11} &=\mathsf{N}_{00}\mathsf{N}_{10}=\mathsf{N}_{01}\mathsf{N}_{00}=\mathsf{N}_{10}\mathsf{N}_{11}=\mathsf{N}_{11}\mathsf{N}_{01}=\mathsf{N}_{11}\mathsf{N}_{00}=\{0\} \ , \\ 
\mathsf{N}_{00}\mathsf{N}_{00} &=\mathsf{span}_\mathbb{F} \{x\cdot y\in \mathsf{N} : \mathfrak{i} x=x\mathfrak{i} =0, \mathfrak{i} y=y\mathfrak{i}=0,\ x,y \in \mathsf{N}\} \subseteq \mathsf{N}_{00} \ ,\\ 
\mathsf{N}_{00}\mathsf{N}_{01} &=\mathsf{span}_\mathbb{F} \{x\cdot y\mathfrak{i} \in \mathsf{N} : \mathfrak{i} x=x\mathfrak{i} =0, \mathfrak{i} y=0, y\mathfrak{i}=y,\ x,y \in \mathsf{N}\} \subseteq \mathsf{N}_{01} \ ,\\ 
\mathsf{N}_{10}\mathsf{N}_{00} &=\mathsf{span}_\mathbb{F} \{\mathfrak{i} x\cdot y\in \mathsf{N} : \mathfrak{i}x=x, x\mathfrak{i} =0, \mathfrak{i} y=y\mathfrak{i} =0,\ x,y \in \mathsf{N}\} \subseteq \mathsf{N}_{10} \ ,\\ 
\mathsf{N}_{10}\mathsf{N}_{01} &=\mathsf{span}_\mathbb{F} \{\mathfrak{i} x\cdot y\mathfrak{i} \in \mathsf{N} : \mathfrak{i}x=x, x\mathfrak{i} =0, \mathfrak{i} y=0, y\mathfrak{i}=y,\ x,y \in \mathsf{N}\} \subseteq \mathsf{N}_{11} \ ,\\ 
\mathsf{N}_{01}\mathsf{N}_{10} &=\mathsf{span}_\mathbb{F} \{x\mathfrak{i}\cdot  \mathfrak{i} y\in \mathsf{N} : \mathfrak{i} x=0, x\mathfrak{i}=x, \mathfrak{i}y=y, y\mathfrak{i} =0,\ x,y \in \mathsf{N}\} \subseteq \mathsf{N}_{00} \ ,\\ 
\mathsf{N}_{01}\mathsf{N}_{11} &=\mathsf{span}_\mathbb{F} \{x\mathfrak{i} \cdot \mathfrak{i}y\mathfrak{i} \in \mathsf{N} : \mathfrak{i} x=0, x\mathfrak{i}=x, \mathfrak{i}y=y\mathfrak{i}=y,\ x,y\in \mathsf{N}\} \subseteq \mathsf{N}_{01} \ , \\
\mathsf{N}_{11}\mathsf{N}_{10} &=\mathsf{span}_\mathbb{F} \{\mathfrak{i} x\mathfrak{i} \cdot \mathfrak{i}y \in \mathsf{N} : \mathfrak{i}x=x\mathfrak{i}=x, \mathfrak{i}y=y, y\mathfrak{i} =0,\ x,y \in \mathsf{N}\} \subseteq \mathsf{N}_{10} \ , \\
\mathsf{N}_{11}\mathsf{N}_{11} &=\mathsf{span}_\mathbb{F} \{\mathfrak{i} x\mathfrak{i} \cdot \mathfrak{i}y\mathfrak{i} \in \mathsf{N} : \mathfrak{i}x=x\mathfrak{i}=x, \mathfrak{i}y=y\mathfrak{i}=y,\ x,y \in \mathsf{N}\} \subseteq \mathsf{N}_{11} \ ,
		\end{split}
	\end{equation}
and this ensures the our affirmation.
\end{proof}

\begin{corollary}\label{1.56}
Under the same hypotheses of Corollary \ref{1.25}, let $\mathfrak{i}$ be the unit of $\mathfrak{B}$. If $\mathfrak{i} \in \mathcal{Z}_\mathfrak{A}(\mathsf{N}_h)$ (the center of $\mathsf{N}_h$ in $\mathfrak{A}$) for some $h\in \mathsf{G}$, then $\mathsf{N}_h=(\mathsf{N}_{00})_h\oplus (\mathsf{N}_{11})_h$. In addition, $\mathfrak{i} \in \mathcal{Z}(\mathsf{N})$ (center of $\mathsf{N}$ in $\mathfrak{A}$) iff $\mathsf{N}=\mathsf{N}_{00}\oplus \mathsf{N}_{11}$. 
\end{corollary}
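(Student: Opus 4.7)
The plan is to use the direct sum decomposition from Theorem \ref{1.27}, together with the explicit characterizations of the four summands in terms of left/right multiplication by $\mathfrak{i}$, and then simply compare $\mathfrak{i}x$ with $x\mathfrak{i}$ to force the ``off-diagonal'' components to vanish.

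First I would note that since each $\mathsf{N}_{ij}$ from Theorem \ref{1.27} is a $\mathsf{G}$-graded subbimodule, the decomposition $\mathsf{N} = \mathsf{N}_{00} \oplus \mathsf{N}_{01} \oplus \mathsf{N}_{10} \oplus \mathsf{N}_{11}$ restricts degree-by-degree to $\mathsf{N}_h = (\mathsf{N}_{00})_h \oplus (\mathsf{N}_{01})_h \oplus (\mathsf{N}_{10})_h \oplus (\mathsf{N}_{11})_h$ for every $h \in \mathsf{G}$. For Claim 1, I would take $x \in \mathsf{N}_h$ and write $x = x_{00} + x_{01} + x_{10} + x_{11}$ with $x_{ij} \in (\mathsf{N}_{ij})_h$. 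Using the explicit descriptions recalled in the proof of Theorem \ref{1.27}, namely $\mathfrak{i}x_{00} = x_{00}\mathfrak{i} = 0$, $\mathfrak{i}x_{01} = 0$, $x_{01}\mathfrak{i} = x_{01}$, $\mathfrak{i}x_{10} = x_{10}$, $x_{10}\mathfrak{i} = 0$, and $\mathfrak{i}x_{11} = x_{11}\mathfrak{i} = x_{11}$, the hypothesis $\mathfrak{i}x = x\mathfrak{i}$ collapses to
\begin{equation}\nonumber
x_{10} + x_{11} = x_{01} + x_{11} \ ,
\end{equation}
so $x_{10} = x_{01}$. Since this element lies in $(\mathsf{N}_{10})_h \cap (\mathsf{N}_{01})_h = \{0\}$ by directness of the sum, we get $x_{01} = x_{10} = 0$, giving $\mathsf{N}_h = (\mathsf{N}_{00})_h \oplus (\mathsf{N}_{11})_h$.

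For Claim 2, the forward direction follows by applying Claim 1 simultaneously for every $h \in \mathsf{G}$ and then taking the direct sum over $h$, since the $\mathsf{N}_{ij}$'s are graded. For the converse, assume $\mathsf{N} = \mathsf{N}_{00} \oplus \mathsf{N}_{11}$ and take any $x \in \mathsf{N}$, written as $x = x_{00} + x_{11}$. The same identities used above yield
\begin{equation}\nonumber
\mathfrak{i}x = \mathfrak{i}x_{00} + \mathfrak{i}x_{11} = x_{11} = x_{00}\mathfrak{i} + x_{11}\mathfrak{i} = x\mathfrak{i} \ ,
\end{equation}
so $\mathfrak{i}$ commutes with every element of $\mathsf{N}$, i.e.\ $\mathfrak{i} \in \mathcal{Z}(\mathsf{N})$.

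There is no real obstacle here; the entire argument is a direct bookkeeping exercise once the characterizations of the $\mathsf{N}_{ij}$ established inside the proof of Theorem \ref{1.27} are in hand. The only subtlety worth mentioning is that one must use the graded nature of the decomposition to pass from the hypothesis ``$\mathfrak{i}$ central on $\mathsf{N}_h$'' to the conclusion that the $(\mathsf{N}_{01})_h$ and $(\mathsf{N}_{10})_h$ summands vanish in that specific degree, rather than globally; but this is automatic because the comparison $\mathfrak{i}x = x\mathfrak{i}$ is carried out degree by degree.
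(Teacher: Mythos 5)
Your proof is correct and follows essentially the same approach as the paper: both use the decomposition of Theorem \ref{1.27} restricted to the degree-$h$ component together with the explicit characterizations $\mathsf{N}_{10}=\{x: \mathfrak{i}x=x,\ x\mathfrak{i}=0\}$, $\mathsf{N}_{01}=\{x: x\mathfrak{i}=x,\ \mathfrak{i}x=0\}$, etc., and derive the vanishing of the off-diagonal parts from $\mathfrak{i}x=x\mathfrak{i}$. Your version makes the component-wise bookkeeping slightly more explicit (writing $x=\sum x_{ij}$ and comparing $\mathfrak{i}x$ with $x\mathfrak{i}$), whereas the paper simply observes that any element of $(\mathsf{N}_{10})_h$ or $(\mathsf{N}_{01})_h$ on which $\mathfrak{i}$ commutes must vanish; both are equivalent bookkeeping.
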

\begin{proof}
	By Theorem \ref{1.27}, we can write 
	\begin{equation}\nonumber
\mathsf{N}=\bigoplus_{g\in \mathsf{G}} \left((\mathsf{N}_{00})_g\oplus (\mathsf{N}_{01})_g\oplus (\mathsf{N}_{10})_g\oplus (\mathsf{N}_{11})_g\right)=\bigoplus_{\substack{{g\in \mathsf{G}}\\{i,j\in \{0,1\}}}} (\mathsf{N}_{ij})_g \ .
	\end{equation}

Now, suppose $\mathfrak{i}\in \mathcal{Z}_\mathfrak{A}(\mathfrak{A}_h)$ for some $h\in \mathsf{G}$, i.e. $\mathfrak{i} a_h=a_h \mathfrak{i}$ for all $a_h\in \mathfrak{A}_h$. In particular, for any $x_h\in \mathsf{N}_h\subseteq \mathfrak{A}_h$, we have $\mathfrak{i} x_h=x_h \mathfrak{i}$. Since $\mathsf{N}_h=(\mathsf{N}_{00})_h\oplus (\mathsf{N}_{10})_h\oplus (\mathsf{N}_{01})_h\oplus (\mathsf{N}_{11})_h$, where $\mathsf{N}_{00}=\{z\in \mathsf{N} : \mathfrak{i} z=0=z\mathfrak{i}\}$, $\mathsf{N}_{10}=\{x\in \mathsf{N} : \mathfrak{i} x=x, x\mathfrak{i}=0\}$ and $\mathsf{N}_{01}=\{y\in \mathsf{N} : y\mathfrak{i}=y, \mathfrak{i} y=0\}$, it follows that $\mathsf{N}_{10}=\mathsf{N}_{01}=\{0\}$. Therefore, $\mathfrak{i}\in \mathcal{Z}_\mathfrak{A}(\mathfrak{A}_h)$ for $h\in \mathsf{G}$ implies $\mathsf{N}_h=(\mathsf{N}_{00})_h\oplus( \mathsf{N}_{11})_h$. 

Consequently, if $\mathfrak{i}\in\mathcal{Z}(\mathfrak{A})$, it follows that $\mathsf{N}_h=(\mathsf{N}_{00})_h\oplus( \mathsf{N}_{11})_h$ for any $h\in \mathsf{G}$, and so we conclude that 
	\begin{equation}\nonumber
\mathsf{N}=\bigoplus_{g\in \mathsf{G}} \mathsf{N}_g=\bigoplus_{g\in \mathsf{G}} \left((\mathsf{N}_{00})_g\oplus  (\mathsf{N}_{11})_g\right)=\mathsf{N}_{00}\oplus \mathsf{N}_{11}
\ .
	\end{equation}
	
On the other hand, suppose $\mathsf{N}=\mathsf{N}_{00}\oplus \mathsf{N}_{11}$. Take any $z=x+y\in\mathsf{N}$ with $x\in\mathsf{N}_{00}$ and $y\in\mathsf{N}_{11}$. Since $\mathfrak{i}x=0=x\mathfrak{i}$ and $\mathfrak{i}y=y=y\mathfrak{i}$, we have that $\mathfrak{i}z=\mathfrak{i}(x+y)=\mathfrak{i}x+\mathfrak{i}y=x\mathfrak{i}+y\mathfrak{i}=(x+y)\mathfrak{i}=z\mathfrak{i}$, and the result follows.
\end{proof}


\section{Graded Bimodules over Simple Graded Algebras}\label{simplealgebras}



In this section, let us exhibit the main results of this work: a description of irreducible graded bimodules and a (special) decomposition of finitely generated graded bimodules.

Let $\mathsf{G}$ be a group, $\mathsf{H}$ a finite subgroup of $\mathsf{G}$, $\sigma\in\mathsf{Z}^2(\mathsf{H},\mathbb{F}^*)$ a $2$-cocycle, and $\mathfrak{B}=M_n(\mathbb{F}^\sigma[\mathsf{H}])$ with the canonical elementary $\mathsf{G}$-grading defined by an $n$-tuple $(g_1,\dots,g_n)\in\mathsf{G}^n$, i.e. $\mathfrak{B}=\bigoplus_{g\in\mathsf{G}}\mathfrak{B}_g$, where $\mathfrak{B}_g=\mathsf{span}_{\mathbb{F}}\{E_{ij}\eta_h\in\mathfrak{B}: g_i^{-1}hg_j=g\}$. Recall that $\mathfrak{B}$ is unitary with unity $1_\mathfrak{B}=\sigma(e,e)^{-1}\sum_{i=1}^n E_{ii}\eta_e$, and the set $\{E_{ij}\eta_h\in\mathfrak{B}: i,j=1,\dots,n, \mbox{ and } h\in \mathsf{H}\}$ is a homogeneous basis of $\mathfrak{B}$. Let us consider $\mathsf{M}$ a unitary $\mathsf{G}$-graded $\mathfrak{B}$-bimodule. In what follows, let us define elements in $\mathsf{M}$ whose product with (homogeneous) elements of $\mathfrak{B}$ is similar to the product in $\mathfrak{B}$.

Fix a nonzero homogeneous element $w_0\in\mathsf{M}$. Since $\mathsf{M}$ is unitary, it follows that $\eta_e E_{i_0 i_0}w_0 E_{j_0 j_0}\eta_e\neq0$ for some $i_0, j_0\in\{1,\dots,n\}$, and hence, $\eta_g E_{ri_0}w_0E_{j_0s}\eta_h\neq0$ for any $r, s\in\{1,\dots,n\}$ and $g,h\in\ \mathsf{H}$ (because $\eta_e E_{ii}=\sigma(g,g^{-1})^{-1} E_{ij}\eta_g E_{ji}\eta_{g^{-1}}$ for any $g\in\mathsf{H}$ and $i,j=1,\dots,n$). Observe that all elements $\eta_g E_{ri_0}w_0E_{j_0s}\eta_h$'s are homogeneous. Without loss of generality, we can consider the element $\eta_e E_{11}w_0E_{11}\eta_e\neq0$ instead of $w_0$. Given any $g\in \mathsf{H}$ and $i,j\in\{1,\dots,n\}$, define the element 
	\begin{equation}\label{1.76}
m_{ij}^g \coloneqq \sum_{h\in \mathsf{H}} \sigma(h,h^{-1}g)^{-1} \eta_h E_{i1} w_0 E_{1j}\eta_{h^{-1}g} \ ,
	\end{equation}
where each $\eta_g E_{r1}w_0E_{1s}\eta_h\neq0$ for any $r, s\in\{1,\dots,n\}$ and $g,h\in\mathsf{H}$.

When $m_{ij}^g\neq0$ for some $g\in \mathsf{H}$ and $i,j\in\{1,\dots,n\}$, observe that $m_{ij}^g$ has behaviour similar to $E_{ij}\eta_g$ in relation to the product by elements of $\mathfrak{B}$, i.e. for any $r,s\in\{1,\dots,n\}$ and $t\in \mathsf{H}$, we have
	\begin{equation}\label{1.44}
	\begin{split}
E_{rs}\eta_t m_{ij}^g &=\delta_{si}\sigma(t,g) m_{rj}^{tg}
\ \ \mbox{ and } \ \
 m_{ij}^g E_{rs}\eta_t =\delta_{jr}\sigma(g,t) m_{is}^{gt} \ ,
	\end{split}
	\end{equation}
where $\delta_{ij}=\left\{ \begin{array}{ccc}
0 ,& \mbox{if} & i\neq j \\ 
1, & \mbox{if} & i= j
\end{array}\right.$
is the Kronecker delta. In fact, take any $t\in \mathsf{H}$ and $r,s\in\{1,\dots,n\}$. It is obvious that $E_{rs}\eta_h m_{ij}^g=0$ when $s\neq i$, and $m_{ij}^g E_{rs}\eta_h=0$ when $r\neq j$. Now, if $s=i$, we have
	\begin{equation}\nonumber
	\begin{split}
E_{ri}\eta_t m_{ij}^g &=E_{ri}\eta_t \left(\sum_{h\in \mathsf{H}} \sigma(h,h^{-1}g)^{-1} E_{i1} \eta_h w_0 E_{1j}\eta_{h^{-1}g}\right) \\
	&=\sum_{h\in \mathsf{H}} \sigma(t,h) \sigma(h,h^{-1}g)^{-1} E_{r1} \eta_{th} w_0 E_{1j}\eta_{h^{-1}g} \\
	&=\sum_{h\in \mathsf{H}} \sigma(t,h) \sigma(h,(th)^{-1}tg)^{-1} E_{r1} \eta_{th} w_0 E_{1j}\eta_{(th)^{-1}tg} \\
	&=\sigma(t,g) \left(\sum_{h\in \mathsf{H}} \sigma(th,(th)^{-1} tg)^{-1} E_{r1} \eta_{th} w_0 E_{1j}\eta_{(th)^{-1}tg}\right) \\
	&= \sigma(t,g)\ m_{rj}^{tg} \ ,
	\end{split}
	\end{equation}
since $\sigma(t,h)\sigma(th,(th)^{-1} tg)=\sigma(t,g)\sigma(h,(th)^{-1}tg)$ for any $h,g,t\in \mathsf{H}$ (see Definition \ref{1.05}). And if $r=j$, we have
	\begin{equation}\nonumber
	\begin{split}
m_{ij}^g E_{js}\eta_t &=\left(\sum_{h\in \mathsf{H}} \sigma(h,h^{-1}g)^{-1} E_{i1} \eta_h w_0 E_{1j}\eta_{h^{-1}g}\right) E_{js}\eta_t \\
	&=\sum_{h\in \mathsf{H}} \sigma(h^{-1}g,t) \sigma(h,h^{-1}g)^{-1} E_{i1} \eta_{h} w_0 E_{1s}\eta_{h^{-1}gt} \\
	&=\sigma(g,t) \left(\sum_{h\in \mathsf{H}} \sigma(h,h^{-1}gt)^{-1} E_{i1} \eta_{h} w_0 E_{1s}\eta_{h^{-1}gt} \right) \\
	&= \sigma(g,t)\ m_{is}^{gt} \ ,
	\end{split}
	\end{equation}
since $\sigma(h,h^{-1}g)\sigma(g,t)=\sigma(h,h^{-1}gt)\sigma(h^{-1}g,t)$ for any $h,g,t\in \mathsf{H}$ (see Definition \ref{1.05}).

Another peculiarity of $m_{ij}^g$'s is that if $m_{ij}^g\neq0$ for some $i,j\in\{1,\dots,n\}$ and $g\in \mathsf{H}$, then $m_{lj}^{hg}=\sigma(h,g)^{-1}E_{li}\eta_h m_{ij}^g\neq0$ and $m_{il}^{gh}=\sigma(g,h)^{-1} m_{ij}^g E_{jl}\eta_h \neq0$ for any $h\in \mathsf{H}$ and $l\in\{1,\dots,n\}$, since 
\begin{equation}\nonumber
m_{ij}^g=(\sigma(e,e)^{-1}\sigma(h^{-1},h)^{-1}E_{il}\eta_{h^{-1}}E_{li}\eta_{h})m_{ij}^g=(\sigma(e,e)^{-1}\sigma(h^{-1},h)^{-1}E_{il}\eta_{h^{-1}})(E_{li}\eta_h m_{ij}^g),
\end{equation}
 and 
\begin{equation}\nonumber
m_{ij}^g=m_{ij}^g(\sigma(e,e)^{-1}\sigma(h,h^{-1})E_{jl}\eta_{h}E_{lj}\eta_{h^{-1}})=(m_{ij}^g E_{jl}\eta_{h})(\sigma(e,e)^{-1}\sigma(h,h^{-1})E_{lj}\eta_{h^{-1}}).
\end{equation}
 From this, we can deduce that $m_{ij}^g\neq0$ for some $i,j\in\{1,\dots,n\}$ and $g\in \mathsf{H}$ iff $m_{rs}^h\neq0$ for any $r,s\in\{1,\dots,n\}$ and $h\in \mathsf{H}$.

Besides that, it is easy to prove that the element $\mathfrak{i}_{\mathsf{M}}\coloneqq \sigma(e,e)^{-1}\sum_{i=1}^n m_{ii}^e\in\mathsf{M}$ satisfies $b\mathfrak{i}_{\mathsf{M}}=\mathfrak{i}_{\mathsf{M}} b$ for any $b\in \mathfrak{B}$.

Notice that when either $\mathsf{deg}(w_0)\in \mathcal{Z}(\mathsf{G})$ or $\mathsf{H}\subset \mathcal{Z}(\mathsf{G})$, we have that $m_{ij}^g$ is a homogeneous element of $\mathsf{M}$ for any $g\in \mathsf{H}$, and $i,j\in\{1,\dots,n\}$. Particularly, when $\mathsf{deg}(w_0)\in \mathcal{Z}(\mathsf{G})$, it follows that $m_{ij}^g \in\mathsf{M}_{g_i^{-1}gg_j\mathsf{deg}(w_0)}=\mathsf{M}_{\mathsf{deg}(w_0)g_i^{-1}gg_j}$ for any $g\in \mathsf{H}$, $i,j=1,\dots,n$. By these observations, it follows that the linear transformation $\psi: \mathfrak{B}\rightarrow \mathsf{M}$ which extends the map $E_{ij}\eta_g\mapsto m_{ij}^g$ is a homogeneous homomorphism of $\mathfrak{B}$-bimodules of degree $\mathsf{deg}(w_0)$ when $\mathsf{deg}(w_0)\in \mathcal{Z}(\mathsf{G})$.

\begin{remark}
Let $\mathsf{G}$ be a group, $\mathsf{H}$ a finite subgroup of $\mathsf{G}$, $\mathbb{F}$ a field, $\sigma\in\mathsf{Z}^2(\mathsf{H},\mathbb{F}^*)$, and $\mathfrak{B}=M_n(\mathbb{F}^\sigma[\mathsf{H}])$ with a canonical elementary $\mathsf{G}$-grading. Let $\mathsf{M}$ be a unitary $\mathsf{G}$-graded $\mathfrak{B}$-bimodule. Fix a nonzero homogeneous element $m_0 \in\mathsf{M}$, and, as in (\ref{1.76}), define
	\begin{equation}\nonumber
m_{ij}^g=\sum_{h\in \mathsf{H}}\sigma(h,h^{-1}g)^{-1} E_{i1} \eta_h m_0 E_{1j}\eta_{h^{-1}g} \ ,
	\end{equation}
for any $g\in \mathsf{H}$, and $i,j=1,\dots, n$. Recall that $E_{rs}\eta_h m_{ij}^g=\delta_{si}\sigma(h,g)m_{rj}^{hg}$ and $m_{ij}^g E_{rs}\eta_h=\delta_{jr}\sigma(g,h)m_{is}^{gh}$, for any $h,g\in \mathsf{H}$ and $i,j,r,s\in\{1\,\dots,n\}$, where $\delta_{ij}$ is the Kronecker delta; and $m_{ij}^g\neq0$ for some $g\in \mathsf{H}$ and $i,j\in\{1,\dots,n\}$ implies that $m_{rs}^h\neq0$ for any $h\in \mathsf{H}$ and $r,s\in\{1,\dots,n\}$.

Suppose $m_{ij}^g\neq0$ for some $g\in \mathsf{H}$ and $i,j\in\{1,\dots,n\}$, and $\mathbb{F}$ is an algebraically closed field such that $\mathsf{char}(\mathbb{F}) = 0$. Take $\mathsf{N}=\mathsf{span}_\mathbb{F}\{m_{ij}^g: g\in \mathsf{H}, i,j=1,\dots,n \}$. \textbf{Affirmation:} if $\mathsf{deg}(m_0)\in\mathcal{Z}(\mathsf{G})$, then $\mathsf{N}$ is an irreducible $\mathsf{G}$-graded $\mathfrak{B}$-subbimodule of $\mathsf{M}$. Indeed, consider the linear transformation $\psi:\mathfrak{B}\longrightarrow\mathsf{N}$ which extends the map $E_{ij}\eta_g \mapsto m_{ij}^g$. By (\ref{1.44}), it follows that $\psi$ is a homomorphism of $\mathfrak{B}$-bimodules. Notice that $\psi$ is surjective and homogeneous of degree $\mathsf{deg}(m_0)$. Since $\mathfrak{B}$ is an irreducible graded $\mathfrak{B}$-bimodule (see Theorem \ref{teoBahtSehgZaic}), it follows that $\psi$ is injective ($\mathsf{ker}(\psi)$ is a graded subbimodule of $\mathfrak{B}$). We conclude that $\psi$ is a homogeneous isomorphism of $\mathsf{G}$-graded $\mathfrak{B}$-bimodules. Therefore, $\mathsf{N}$ is an irreducible graded $\mathfrak{B}$-bimodule. In particular, if $\mathsf{M}$ is irreducible graded, then $\mathsf{M}=\mathsf{N}$.
\end{remark}

In the above remark, it is important to comment that not always $m_{ij}^g\neq0$ for some $g\in \mathsf{H}$ and $i,j\in\{1,\dots,n\}$. In the next result, let us present other cases of irreducible $\mathsf{G}$-graded $\mathfrak{B}$-bimodules (possibly when $m_{ij}^g=0$ for some $g\in \mathsf{H}$ and $i,j\in\{1,\dots,n\}$), which are not isomorphic to $\mathfrak{B}$ as graded $\mathfrak{B}$-bimodules.

\begin{proposition}\label{1.64}
Let $\mathsf{G}$ be a finite abelian group, $\mathbb{F}$ an algebraically closed field with $\mathsf{char}(\mathbb{F})=0$, and $\mathfrak{B}=\mathbb{F}^\sigma[\mathsf{G}]$ a twisted group algebra. Let $\mathsf{M}$ be an irreducible unitary $\mathsf{G}$-graded $\mathfrak{B}$-bimodule. There exist a nonzero homogeneous element $w_0\in \mathsf{M}$ and a character $\chi:\mathsf{G}\rightarrow \mathbb{F}^*$ satisfying $\mathsf{M}=\mathfrak{B} w_0$ and $\eta_g w_0=\chi(g)^{-1}w_0\eta_g$ for any $g\in\mathsf{G}$.
\end{proposition}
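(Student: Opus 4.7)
My plan is to exploit the fact that $\mathfrak{B}=\mathbb{F}^\sigma[\mathsf{G}]$ is a graded-division algebra (so every $\eta_g$ is invertible in $\mathfrak{B}$) together with the commutativity of $\mathsf{G}$ to obtain a simultaneous eigenspace decomposition of $\mathsf{M}$ under conjugation. For each $g\in\mathsf{G}$, define the $\mathbb{F}$-linear operator $T_g\colon\mathsf{M}\to\mathsf{M}$ by $T_g(m)=\eta_g m\eta_g^{-1}$. Since $\mathsf{G}$ is abelian, $T_g$ sends $\mathsf{M}_t$ into $\mathsf{M}_{gtg^{-1}}=\mathsf{M}_t$, so $T_g$ is degree-preserving. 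The key identity is $T_gT_h=T_{gh}$: writing $\eta_g\eta_h=\sigma(g,h)\eta_{gh}$ and $\eta_h^{-1}\eta_g^{-1}=(\eta_g\eta_h)^{-1}=\sigma(g,h)^{-1}\eta_{gh}^{-1}$, the cocycle scalar cancels, so $g\mapsto T_g$ is an honest (not merely projective) group homomorphism $\mathsf{G}\to\mathrm{GL}_\mathbb{F}(\mathsf{M})$.

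Because $|\mathsf{G}|<\infty$, I obtain $T_g^{|\mathsf{G}|}=T_e=\mathrm{id}_{\mathsf{M}}$, so the minimal polynomial of $T_g$ divides $x^{|\mathsf{G}|}-1$, which has distinct roots in the algebraically closed field $\mathbb{F}$ of characteristic zero. Hence each $T_g$ is diagonalizable, and since the commuting family $\{T_g\}_{g\in\mathsf{G}}$ is simultaneously diagonalizable, I would decompose
\[
\mathsf{M}=\bigoplus_{\chi\in\widehat{\mathsf{G}}}\mathsf{M}^{\chi},\qquad \mathsf{M}^{\chi}=\{m\in\mathsf{M}:T_g(m)=\chi(g)^{-1}m\text{ for all }g\in\mathsf{G}\}.
\]
The multiplicativity of $\chi$ is forced by $T_gT_h=T_{gh}$, and each $\mathsf{M}^{\chi}$ is a graded subspace because every $T_g$ preserves the grading. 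Since $\mathsf{M}\neq\{0\}$, some $\mathsf{M}^{\chi}$ is nonzero, and being graded it contains a nonzero homogeneous element $w_0$. By construction $\eta_g w_0=\chi(g)^{-1}w_0\eta_g$ for every $g\in\mathsf{G}$.

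Finally, the relation $w_0\eta_g=\chi(g)\eta_g w_0$ gives $w_0\mathfrak{B}\subseteq\mathfrak{B}w_0$ and symmetrically $\mathfrak{B}w_0\subseteq w_0\mathfrak{B}$, so $\mathfrak{B}w_0=w_0\mathfrak{B}$ is stable under both left and right multiplication, and it is a graded subbimodule because it is spanned by the homogeneous elements $\eta_g w_0$ of degree $g\cdot\mathsf{deg}(w_0)$. Being nonzero, the irreducibility of $\mathsf{M}$ forces $\mathsf{M}=\mathfrak{B}w_0$. The step I expect to be the subtlest is the very first one, namely noticing that the abelianness of $\mathsf{G}$ makes the cocycle twist cancel in the conjugation action, turning what a priori would only be a projective representation of $\mathsf{G}$ on $\mathsf{M}$ into a genuine linear one; once that point is secured, the simultaneous diagonalization and the verification that $\mathfrak{B}w_0$ is a graded subbimodule are essentially mechanical.
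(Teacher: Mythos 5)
Your proof is correct. It rests on the same underlying observation as the paper's proof — that the conjugation action of the homogeneous units $\eta_g$ on $\mathsf{M}$ is a \emph{linear} (not merely projective) action of $\mathsf{G}$ because the cocycle twist cancels, and that one should decompose $\mathsf{M}$ along the resulting characters — but you package it as abstract spectral theory rather than explicit computation. The paper instead fixes a homogeneous $m_0$ and writes down the Fourier-type elements
\[
w_{\chi}=\sum_{h\in \mathsf{G}}\sigma(h,h^{-1})^{-1}\chi(h)\,\eta_h\, m_0\,\eta_{h^{-1}},
\]
which, after unwinding $\eta_h^{-1}=\sigma(h,h^{-1})^{-1}\sigma(e,e)^{-1}\eta_{h^{-1}}$, are exactly $\sigma(e,e)\sum_h\chi(h)T_h(m_0)$ — i.e.\ $|\mathsf{G}|$ times the projection of $m_0$ onto your eigenspace $\mathsf{M}^{\chi}$. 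The paper then verifies the commutation relation $\eta_t w_\chi=\chi(t)^{-1}w_\chi\eta_t$ directly from the cocycle identity and recovers $\mathfrak{B}m_0\mathfrak{B}=\sum_\chi\mathfrak{B}w_\chi$ by inverting the character matrix (a Vandermonde-type orthogonality step), whereas you get both for free from the simultaneous eigenspace decomposition. Your route is cleaner and more conceptual here; what the paper's explicit formula buys is that the same construction transplants verbatim to the matrix case $M_n(\mathbb{F}^\sigma[\mathsf{H}])$ in Theorem~\ref{1.03}, where the homogeneous basis elements $E_{ij}\eta_h$ are no longer invertible and your conjugation operators $T_g$ are not available as stated — there one really does want the explicit $\hat w_\chi$ averaged over both $\mathsf{H}$ and the diagonal idempotents. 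One small remark: the identity $T_e=\mathrm{id}_\mathsf{M}$, which you use to get $T_g^{|\mathsf{G}|}=\mathrm{id}$, is exactly where the hypothesis that $\mathsf{M}$ is \emph{unitary} enters ($\eta_e=\sigma(e,e)1_\mathfrak{B}$ acts as a scalar only because $1_\mathfrak{B}$ acts as the identity); it would be worth flagging that explicitly.
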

\begin{proof}
Fix a nonzero homogeneous element $m_0 \in\mathsf{M}$, and hence, we have that $\mathsf{M}=\mathfrak{B} m_0 \mathfrak{B}$. Let $\chi_1,\dots,\chi_s$ be all distinct irreducible characters of $\mathsf{G}$ (see Corollary \ref{1.60}). Since $\mathsf{G}$ and $\widehat{\mathsf{G}}=\{\chi_1,\dots,\chi_s\}$ are isomorphic groups (see Proposition \ref{1.68}), we have that $s=|\mathsf{G}|$. For each $i\in\{1,\dots,s\}$, define
	\begin{equation}\nonumber
		w_{\chi_i}=\sum_{h\in \mathsf{G}} \sigma(h,h^{-1})^{-1}\chi_i(h) \eta_h m_0 \eta_{h^{-1}} \ .
	\end{equation}
Let us show that $\mathfrak{B} w_{\chi_i}= w_{\chi_i}\mathfrak{B}$ and $\mathfrak{B} m_0 \mathfrak{B}=\sum_{i=1}^s \mathfrak{B} w_{\chi_i}$, for all $i\in\{1,\dots,s\}$. Observe that $\mathfrak{B} w_{\chi_i}\mathfrak{B}$ is a graded subbimodule of $\mathsf{M}$, since $m_0$ is a homogeneous element, and $\mathsf{G}$ is abelian.

Take any $\eta_t\in \mathfrak{B}$, and $i=1,\dots,s$. We have that 
	\begin{equation}\nonumber
	\begin{split}
		\eta_t w_{\chi_i} &=\eta_t \left(\sum_{h\in \mathsf{G}} \sigma(h,h^{-1})^{-1}\chi_i(h) \eta_h m_0 \eta_{h^{-1}}\right)
	=\sum_{h\in \mathsf{G}} \sigma(t,h) \sigma(h,h^{-1})^{-1} \chi_i(h) \eta_{th} m_0\eta_{h^{-1}} \\
		&=\sum_{h\in \mathsf{G}} \sigma(t,h) \sigma(h,h^{-1})^{-1} \sigma((th)^{-1},t)^{-1}\chi_i(h) \eta_{th} m_0 \eta_{(th)^{-1}}\eta_t \\
		&=\left(\sum_{h\in \mathsf{G}} \sigma((th)^{-1},th)^{-1}\chi_i(h) \eta_{th} m_0 \eta_{(th)^{-1}}\right)\eta_t 
	= \chi_i(t)^{-1} w_{\chi_i} \eta_t \ ,
	\end{split}
\end{equation}
since $\sigma(t,h)\sigma((th)^{-1},th)=\sigma(h,h^{-1})\sigma((th)^{-1},t)$ for any $h,t\in \mathsf{G}$ (it is enough to make $x=(th)^{-1}$, $y=t$ and $z=h$ in Definition \ref{1.05}, and $\sigma(h,h^{-1})=\sigma(h^{-1},h)$ by Remark \ref{1.07}), and $\chi_i(ht)=\chi_i(h)\chi_i(t)$ for any $h,t\in\mathsf{G}$ (see Theorem \ref{1.65}). From this, it follows that $\mathfrak{B} w_{\chi_i}=w_{\chi_i}\mathfrak{B}$ for all $i=1,\dots,s$.

Now, write $\mathsf{G}=\{g_1,\dots,g_s\}$. Since the matrix 
\begin{equation}\nonumber
	[\chi]\coloneqq\begin{bmatrix} \chi_1(g_1) & \chi_1(g_2) & \cdots & \chi_1(g_s)\\
							\chi_2(g_1) & \chi_2(g_2) & \cdots & \chi_2(g_s)\\
							\vdots &\vdots& \ddots &\vdots \\
							\chi_s(g_1) & \chi_s(g_2) & \cdots & \chi_s(g_s)\\
	\end{bmatrix}
\end{equation}
is invertible, with inverse matrix given by 
\begin{equation}\nonumber
	[\chi]^{-1}= |\mathsf{G}|^{-1}\begin{bmatrix} 
		\chi_1(g_1^{-1}) & \chi_2(g_1^{-1}) & \cdots & \chi_s(g_1^{-1})\\
		\chi_1(g_2^{-1}) & \chi_2(g_2^{-1}) & \cdots & \chi_s(g_2^{-1})\\
		\vdots &\vdots& \ddots &\vdots \\
		\chi_1(g_s^{-1}) & \chi_2(g_s^{-1}) & \cdots & \chi_s(g_s^{-1})\\
	\end{bmatrix}
\end{equation}
(this is a consequence of Proposition \ref{1.62}), we have that $\eta_h m_0 \eta_{h^{-1}}\in\sum_{i=1}^s\mathbb{F} w_{\chi_i}$ for any $h\in\mathsf{G}$, since
\begin{equation}\nonumber
	\begin{bmatrix} w_{\chi_1}\\
		w_{\chi_2}\\
		\vdots\\
		w_{\chi_s}\\
	\end{bmatrix}
=
\begin{bmatrix} \chi_1(g_1) & \chi_1(g_2) & \cdots & \chi_1(g_s)\\
		\chi_2(g_1) & \chi_2(g_2) & \cdots & \chi_2(g_s)\\
		\vdots &\vdots& \ddots &\vdots \\
		\chi_s(g_1) & \chi_s(g_2) & \cdots & \chi_s(g_s)\\
	\end{bmatrix}
	\cdot
	\begin{bmatrix} \sigma(g_1,g_1^{-1})^{-1} \eta_{g_1} m_0 \eta_{g_1^{-1}}\\
	\sigma(g_2,g_2^{-1})^{-1} \eta_{g_2} m_0 \eta_{g_2^{-1}}\\
	\vdots\\
	\sigma(g_s,g_s^{-1})^{-1}\eta_{g_s} m_0 \eta_{g_s^{-1}}\\
\end{bmatrix} \ .
\end{equation}
Hence, we can conclude that $\eta_g m_0 \eta_{h}\in\sum_{i=1}^s\mathfrak{B} w_{\chi_i}$ for any $g,h\in\mathsf{G}$, and so $\mathfrak{B} m_0\mathfrak{B}=\sum_{i=1}^s\mathfrak{B} w_{\chi_i}$. Therefore, we conclude that $\mathsf{M}=\mathfrak{B} w_{\chi}$ for some irreducible character $\chi$ of $\mathsf{G}$, since each $\mathfrak{B} w_{\chi_i}$ is a graded subbimodule of $\mathsf{M}$ and $\mathsf{M}$ is irreducible graded.
\end{proof}

Below, let us show that, given a finite dimensional simple $\mathsf{G}$-graded algebra $\mathfrak{A}$, any unitary $\mathsf{G}$-graded $\mathfrak{A}$-bimodule $\mathsf{M}$ which satisfies both weak chain conditions can be written as a finite direct sum of irreducible $\mathsf{G}$-graded $\mathfrak{A}$-subbimodules in the form $\mathfrak{A} w$, where $w\in \mathsf{M}$ and $w\mathfrak{A}=\mathfrak{A} w$.

\begin{theorem}\label{1.03}
Let $\mathsf{G}$ be a group, $\mathsf{H}$ a finite abelian subgroup of $\mathsf{G}$, and $\mathbb{F}$ an algebraically closed field such that $\mathsf{char}(\mathbb{F})=0$. Consider $\mathsf{M}$ a $\mathsf{G}$-graded unitary $\mathfrak{B}$-bimodule, where $\mathfrak{B}=M_n(\mathbb{F}^\sigma[\mathsf{H}])$ with a canonical elementary $\mathsf{G}$-grading and $\sigma\in\mathsf{Z}^2(\mathsf{H},\mathbb{F}^*)$. If $\mathsf{M}$ is irreducible graded, then there exists a nonzero homogeneous element $w\in\mathsf{M}$ satisfying $\mathsf{M}=\mathfrak{B}w$, such that $\mathfrak{B} w= w\mathfrak{B}$.
\end{theorem}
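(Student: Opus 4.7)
The plan is to construct $w$ explicitly as a character-weighted element $\hat{w}_{\chi}$ of the form~(\ref{1.61}), generalising the strategy of Proposition~\ref{1.64} (the case $n=1$, $\mathsf{G}=\mathsf{H}$ abelian) to the matrix setting. First, using the irreducibility of $\mathsf{M}$ together with unitarity and the decomposition $1_{\mathfrak{B}}=\sum_{i=1}^{n}\sigma(e,e)^{-1}E_{ii}\eta_{e}$ of the identity of $\mathfrak{B}$ into degree-$e$ orthogonal idempotents, I would locate a nonzero homogeneous element $m_{0}\in\mathsf{M}$ with $m_{0}=\hat{e}\,m_{0}\,\hat{e}$ for $\hat{e}:=\sigma(e,e)^{-1}E_{11}\eta_{e}$. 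Since $\mathsf{H}$ is finite abelian and $\mathbb{F}$ is algebraically closed of characteristic zero, Theorem~\ref{1.65} and Proposition~\ref{1.68} supply $|\mathsf{H}|$ irreducible one-dimensional characters $\chi\colon\mathsf{H}\to\mathbb{F}^{*}$ (each a group homomorphism), and the character matrix is invertible by the orthogonality relations of Proposition~\ref{1.62}. For each such $\chi$ I then define $\hat{w}_{\chi}\in\mathsf{M}$ by formula~(\ref{1.61}).

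The technical core is the commutation identity
\[
E_{rs}\eta_{t}\,\hat{w}_{\chi}\;=\;\chi(t)^{-1}\,\hat{w}_{\chi}\,E_{rs}\eta_{t},\qquad r,s\in\{1,\dots,n\},\ t\in\mathsf{H},
\]
which immediately yields $\mathfrak{B}\hat{w}_{\chi}=\hat{w}_{\chi}\mathfrak{B}$. To prove it, I would introduce the $\chi$-twisted variants $m_{ij}^{g}(\chi):=\sum_{h}\chi(h)\sigma(h,h^{-1}g)^{-1}\eta_{h}E_{i1}m_{0}E_{1j}\eta_{h^{-1}g}$ of~(\ref{1.76}): the change of variable $h\mapsto t^{-1}h$ in left-multiplication produces a factor $\chi(t^{-1})=\chi(t)^{-1}$ (using that $\chi$ is a homomorphism, which relies on $\mathsf{H}$ abelian), while the $2$-cocycle identity of Definition~\ref{1.05} together with $\sigma(t,e)=\sigma(e,t)$ from Remark~\ref{1.07} rearrange the $\sigma$-factors to give $E_{rs}\eta_{t}\,m_{ij}^{g}(\chi)=\delta_{si}\chi(t)^{-1}\sigma(t,g)m_{rj}^{tg}(\chi)$ and $m_{ij}^{g}(\chi)\,E_{rs}\eta_{t}=\delta_{jr}\sigma(g,t)m_{is}^{gt}(\chi)$; taking $g=e$ and summing over $i$ yields the asserted commutation. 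Next, to see $\mathsf{M}=\mathfrak{B}\hat{w}_{\chi}$ for some $\chi$, I would invert the linear system relating the $\hat{w}_{\chi_{k}}$'s to the quantities $\sum_{i}\sigma(h,h^{-1})^{-1}\eta_{h}E_{i1}m_{0}E_{1i}\eta_{h^{-1}}$ (indexed by $h\in\mathsf{H}$) via the invertible character matrix of Proposition~\ref{1.62}, recovering $m_{0}$ (upon multiplication by $\hat{e}$ on both sides) as a linear combination of the $\hat{e}\hat{w}_{\chi_{k}}\hat{e}$; hence $m_{0}\in\sum_{\chi}\mathfrak{B}\hat{w}_{\chi}$ and therefore $\mathsf{M}=\mathfrak{B}m_{0}\mathfrak{B}\subseteq\sum_{\chi}\mathfrak{B}\hat{w}_{\chi}$. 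Graded-irreducibility of $\mathsf{M}$ then forces $\mathfrak{B}\hat{w}_{\chi}=\mathsf{M}$ for at least one $\chi$.

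The main obstacle I anticipate is verifying that $\hat{w}_{\chi}$ is homogeneous, which is what guarantees $\mathfrak{B}\hat{w}_{\chi}$ is a graded subbimodule and is needed for the final irreducibility step. A direct degree calculation shows that each summand $\eta_{h}E_{i1}m_{0}E_{1i}\eta_{h^{-1}}$ of $\hat{w}_{\chi}$ sits in $\mathsf{G}$-degree $g_{i}^{-1}h(g_{1}\deg(m_{0})g_{1}^{-1})h^{-1}g_{i}$, which a priori depends on both $h\in\mathsf{H}$ and $i\in\{1,\dots,n\}$. The $h$-dependence disappears precisely when $g_{1}\deg(m_{0})g_{1}^{-1}\in C_{\mathsf{G}}(\mathsf{H})$; after normalising the defining tuple so that $g_{1}=e$ (by passing to an equivalent canonical elementary grading), this becomes the condition $\deg(m_{0})\in C_{\mathsf{G}}(\mathsf{H})$. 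I would argue that such an $m_{0}$ can always be found inside $\hat{e}\mathsf{M}\hat{e}$ by exploiting the $\mathsf{H}$-conjugation action $v\mapsto\sigma(h,h^{-1})^{-1}\eta_{h}v\eta_{h^{-1}}$, which preserves $\hat{e}\mathsf{M}\hat{e}$ and permutes its graded components according to $\mathsf{G}$-conjugation by $\mathsf{H}$; a character-averaging argument applied to the graded-irreducible structure of $\mathsf{M}$ extracts a nonzero homogeneous component of $C_{\mathsf{G}}(\mathsf{H})$-degree fixed by this action. The residual $i$-dependence is resolved by noting that the summands for distinct $i$ belong to pairwise orthogonal Peirce pieces $E_{ii}\mathsf{M} E_{ii}$ and that, with $\deg(m_{0})$ centralising $\mathsf{H}$, the common degrees $g_{i}^{-1}\deg(m_{0})g_{i}$ align after a final reindexing/normalisation of the tuple; this bookkeeping is the most delicate step of the argument.
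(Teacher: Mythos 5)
Your construction reproduces the paper's proof essentially step by step: normalise $m_0$ into the corner $\hat{e}\mathsf{M}\hat{e}$, form the character-averaged element $\hat{w}_\chi$ of (\ref{1.61}), establish the commutation $E_{rs}\eta_t\hat{w}_\chi=\chi(t)^{-1}\hat{w}_\chi E_{rs}\eta_t$ via the $2$-cocycle identity and the fact that $\chi$ is a homomorphism, invert the character matrix (Proposition \ref{1.62}) to obtain $\mathsf{M}=\sum_\chi\mathfrak{B}\hat{w}_\chi$, and invoke graded irreducibility. The subtlety you flag at the end --- that $\hat{w}_\chi$ need not be homogeneous, hence $\mathfrak{B}\hat{w}_\chi$ need not be a \emph{graded} subbimodule and the final irreducibility step is not automatic --- is real, and the paper's own proof is silent on it.

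The repair you sketch, however, cannot work, because the obstruction is not one of bookkeeping: the theorem as stated is false without further hypotheses on $\mathsf{G}$. You want $m_0\in\hat{e}\mathsf{M}\hat{e}$ with $\deg(m_0)\in C_\mathsf{G}(\mathsf{H})$, but such an element need not exist. Take $\mathsf{G}=S_3$, $\mathsf{H}=\langle(12)\rangle$, $\sigma$ trivial, $n=1$ (so $\mathfrak{B}=\mathbb{F}\mathsf{H}$ with its natural $\mathsf{G}$-grading), and let $\mathsf{M}$ be the $\mathbb{F}$-span of the double coset $\mathsf{H}(123)\mathsf{H}$ inside $\mathbb{F} S_3$, with the inherited grading and $\mathfrak{B}$-bimodule action. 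This $\mathsf{M}$ is a unitary, irreducible graded $\mathfrak{B}$-bimodule (the two-sided $\mathsf{H}$-action is transitive on the four-element double coset), its graded support is disjoint from $C_{S_3}(\mathsf{H})=\mathsf{H}$, and $\dim_\mathbb{F}\mathsf{M}=4>2=\dim_\mathbb{F}\mathfrak{B}$, so $\mathsf{M}=\mathfrak{B}w$ fails for every $w$ whatsoever. The same example shows the $n=1$ base case in the paper does not actually reduce to Proposition \ref{1.64}, which assumes $\mathfrak{B}=\mathbb{F}^\sigma[\mathsf{G}]$ with $\mathsf{G}$ abelian, not $\mathbb{F}^\sigma[\mathsf{H}]$ for a proper subgroup. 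Where the argument genuinely closes is under the hypothesis that $\mathsf{G}$ is abelian --- the standing assumption in Proposition \ref{1.64}, Corollary \ref{1.09} and Theorem \ref{1.83} --- since then each summand $\eta_h E_{i1}m_0E_{1i}\eta_{h^{-1}}$ has degree $g_i^{-1}hg_1\deg(m_0)g_1^{-1}h^{-1}g_i=\deg(m_0)$, independent of $h$ and $i$, so $\hat{w}_\chi$ is homogeneous with no extra work. Rather than attempting your averaging and reindexing steps, you should add an abelianity (or suitable centrality) hypothesis and then drop the final paragraph of your proposal entirely.
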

\begin{proof}
	If $n=1$, then the result follows from Proposition \ref{1.64}.

	Assume that $n>1$. Let $\chi_1,\dots,\chi_s$ be all the distinct irreducible characters of $\mathsf{H}$, where $\mathsf{H}\cong\widehat{\mathsf{H}}=\{\chi_1,\dots,\chi_s\}$. Fix a nonzero homogeneous element $m_0 \in\mathsf{M}$ and $i=1,\dots, s$. From the discussion in the third paragraph of this chapter (text above of equality (\ref{1.76})), we can assume, without loss of generality, that $m_0=\eta_e E_{11}\hat{m}E_{11}\eta_e$ for some nonzero $\hat{m}\in\mathsf{M}$, and so $E_{p1}\eta_g w_0\eta_h E_{1q}\neq0$ for any $p,q\in\{1,\dots,n\}$ and $g,h\in\mathsf{H}$. Now, inspired by the definition of $w_{\chi_i}$ in the proof of Proposition \ref{1.64}, define
	\begin{equation}\nonumber
\hat{w}_{\chi_i}=\sum_{j=1}^s \sum_{h\in \mathsf{H}} \sigma(h,h^{-1})^{-1}\chi_i(h) E_{j1} \eta_h m_0 \eta_{h^{-1}}E_{1j} \ .
	\end{equation}
As in the proof of Proposition \ref{1.64}, for any $E_{pq}\eta_t\in\mathfrak{B}$, it is not difficult to see that
		\begin{equation}\nonumber
	\begin{split}
	E_{pq}\eta_t \hat{w}_{\chi_i} &=\sum_{h\in \mathsf{H}} \sigma(t,h) \sigma(h,h^{-1})^{-1}\chi_i(h) E_{p1} \eta_{th} m_0 \eta_{h^{-1}}E_{1q} \\
		&=\sum_{h\in \mathsf{H}} \sigma(t,h) \sigma(h,h^{-1})^{-1}\sigma((th)^{-1},t)^{-1}\chi_i(h) E_{p1} \eta_{th} m_0 \left(\eta_{(th)^{-1}}E_{1p}\right) E_{pq}\eta_{t} \\
		&=\chi_i(t)^{-1}\left(\sum_{h\in \mathsf{H}} \sigma((th)^{-1},th)^{-1}\chi_i(th) E_{p1} \eta_{th} m_0 \eta_{(th)^{-1}}E_{1p}\right) E_{pq}\eta_{t} = \chi_i(t)^{-1} \hat{w}_{\chi_i} E_{pq}\eta_t \ ,
	\end{split}
\end{equation}
Therefore, we have proved that $E_{pq}\eta_t \hat{w}_{\chi_i}=\chi_i(t)^{-1}\hat{w}_{\chi_i}E_{pq}\eta_t$ for any $t\in \mathsf{H}$, $p,q,i\in\{1,\dots,s\}$, and hence, $\mathfrak{B} \hat{w}_{\chi_i} = \hat{w}_{\chi_i}\mathfrak{B}$, for all $i=1,\dots, s$.

Finally, let us deduce that $\mathsf{M}=\sum_{i=1}^s\mathfrak{B} \hat{w}_{\chi_i}$. Being $\mathsf{H}=\{h_1,h_2,\dots,h_s\}$, we have that
\begin{equation}\nonumber
	\begin{bmatrix} 
		\hat{w}_{\chi_1}\\
		\hat{w}_{\chi_2}\\
			\vdots\\
		\hat{w}_{\chi_s}\\
	\end{bmatrix}
=
\begin{bmatrix} 
		\chi_1(h_1) & \chi_1(h_2) & \cdots & \chi_1(h_s)\\
		\chi_2(h_1) & \chi_2(h_2) & \cdots & \chi_2(h_s)\\
		\vdots &\vdots& \ddots &\vdots \\
		\chi_s(h_1) & \chi_s(h_2) & \cdots & \chi_s(h_s)\\
\end{bmatrix}
\cdot
\begin{bmatrix}
	\sigma(h_1,h_1^{-1})^{-1} \cdot\sum_{j=1}^s\left(E_{j1} \eta_{h_1} m_0 \eta_{h_1^{-1}} E_{1j}\right) \\
	\sigma(h_2,h_2^{-1})^{-1} \cdot\sum_{l=1}^s\left(E_{l1} \eta_{h_2} m_0 \eta_{h_2^{-1}} E_{1l}\right) \\
	\vdots\\
	\sigma(h_s,h_s^{-1})^{-1} \cdot\sum_{k=1}^s\left(E_{k1} \eta_{h_s} m_0 \eta_{h_s^{-1}} E_{1k}\right)\\
\end{bmatrix} \ .
\end{equation}
Since $E_{p1}\eta_g w_0\eta_h E_{1q}\neq0$ for any $p,q\in\{1,\dots,n\}$ and $g,h\in\mathsf{H}$, note that $\sum_{j=1}^s E_{j1} \eta_{h_i} m_0 \eta_{h_i^{-1}} E_{1j}\neq0$, for all $i=1,\dots,s$. As $\mathsf{M}$ is irreducible graded, it follows that $\mathsf{M}=\mathfrak{B}\left( \sum_{j=1}^s E_{j1} \eta_{h_i} m_0 \eta_{h_i^{-1}} E_{1j} \right)\mathfrak{B}$ for all $i=1,\dots,s$. Similarly to the final part of the proof of Proposition \ref{1.64}, and using the matrix equality above, it is easy to check that $\mathsf{M}=\sum_{i=1}^s\mathfrak{B} \hat{w}_{\chi_i}$, and so, as $\mathsf{M}$ is an irreducible $\mathsf{G}$-graded $\mathfrak{B}$-bimodule, we conclude that $\mathsf{M}=\mathfrak{B} \hat{w}_{\chi}$ for some irreducible character $\chi$ of $\mathsf{H}$.
\end{proof}

Observing the proof of Theorem \ref{1.03}, and considering the subset $\beta=\{E_{ij}\eta_h: i,j=1,\dots,n, h\in\mathsf{H}\}$ of $\mathfrak{B}$, we have that $wb=\gamma_{b}bw\neq0$ for any $b\in\beta$, where $\gamma_{b}\in\mathbb{F}^*$. This means that any element of $\mathfrak{B}$ commutes with any element of $\mathsf{M}$ up to a scalar.

Below, let us rewrite the previous theorem from its proof.

\begin{corollary}\label{1.43}
Assume the same assumptions of Theorem \ref{1.03}. 
If $\mathsf{M}$ is irreducible graded, then there exist a nonzero homogeneous element $m_0 \in\mathsf{M}$ and a character $\chi: \mathsf{H}\rightarrow \mathbb{F}^*$ such that the element defined by
	\begin{equation}\nonumber
\hat{w}_{\chi}=\sum_{i=1}^n\left(\sum_{h\in \mathsf{H}} \chi(h)\sigma(h,h^{-1})^{-1} \eta_h E_{i1} m_0 E_{1i}\eta_{h^{-1}}\right) 
	\end{equation}
satisfies $\mathsf{M}=\mathfrak{B} \hat{w}_{\chi}$, and $E_{pq}\eta_t\hat{w}_{\chi}=\chi(t)^{-1} \hat{w}_{\chi}E_{pq}\eta_t $ for all $p,q\in\{1,\dots,n\}$ and $t\in \mathsf{H}$.
\end{corollary}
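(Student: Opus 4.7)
The plan is to extract the statement directly from the proof of Theorem \ref{1.03}, which already constructs $\hat{w}_\chi$ explicitly rather than merely establishing its existence. First, fix a nonzero homogeneous $m_0\in\mathsf{M}$; after replacing $m_0$ by $\eta_e E_{11}\hat{m}E_{11}\eta_e$ for a suitable nonzero homogeneous $\hat{m}$ (as in the opening of $\S$\ref{simplealgebras}), we may assume $E_{p1}\eta_g m_0\eta_h E_{1q}\neq 0$ for all $p,q\in\{1,\dots,n\}$ and $g,h\in\mathsf{H}$. Enumerating $\widehat{\mathsf{H}}=\{\chi_1,\dots,\chi_s\}$ (of cardinality $|\mathsf{H}|$ by Proposition \ref{1.68}), form $\hat{w}_{\chi_i}$ using the formula in the statement, noting that $\eta_h E_{i1}m_0 E_{1i}\eta_{h^{-1}}=E_{i1}\eta_h m_0 \eta_{h^{-1}}E_{1i}$ since the scalar matrices $\eta_h$ commute with the matrix units inside $M_n(\mathbb{F}^\sigma[\mathsf{H}])$.

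The key calculation is the commutation identity $E_{pq}\eta_t \hat{w}_{\chi_i}=\chi_i(t)^{-1}\hat{w}_{\chi_i} E_{pq}\eta_t$, carried out in the proof of Theorem \ref{1.03}. Substituting the definition and reindexing the inner sum over $\mathsf{H}$ by $h\mapsto th$, it reduces to the $2$-cocycle identity $\sigma(t,h)\sigma((th)^{-1},th)=\sigma(h,h^{-1})\sigma((th)^{-1},t)$ (which follows from Definition \ref{1.05} together with Remark \ref{1.07}) combined with the homomorphism property $\chi_i(th)=\chi_i(t)\chi_i(h)$, valid because $\mathsf{H}$ is abelian (Theorem \ref{1.65}). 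As a direct consequence one obtains $\mathfrak{B}\hat{w}_{\chi_i}=\hat{w}_{\chi_i}\mathfrak{B}$, so $\mathfrak{B}\hat{w}_{\chi_i}$ is a subbimodule of $\mathsf{M}$.

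To finish, list $\mathsf{H}=\{h_1,\dots,h_s\}$ and express the column vector $(\hat{w}_{\chi_i})_{i=1}^{s}$ as the character-table matrix $[\chi_i(h_k)]$ applied to the column whose $k$-th entry is $\sigma(h_k,h_k^{-1})^{-1}\sum_{j=1}^n E_{j1}\eta_{h_k}m_0\eta_{h_k^{-1}}E_{1j}$. By the orthogonality relations (Proposition \ref{1.62}) this matrix is invertible, so each such entry, which is nonzero by the choice of $m_0$, lies in $\sum_i\mathbb{F}\hat{w}_{\chi_i}$; $\mathsf{G}$-irreducibility of $\mathsf{M}$ then forces $\mathsf{M}=\sum_{i=1}^{s}\mathfrak{B}\hat{w}_{\chi_i}$, and irreducibility applied to the graded subbimodules $\mathfrak{B}\hat{w}_{\chi_i}$ picks out one character $\chi=\chi_i$ with $\mathsf{M}=\mathfrak{B}\hat{w}_{\chi}$. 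The subtlest point — already implicit in the proof of Theorem \ref{1.03} — is verifying that each $\mathfrak{B}\hat{w}_{\chi_i}$ is actually a \emph{graded} subbimodule (the element $\hat{w}_{\chi_i}$ is not obviously homogeneous when $\mathsf{G}$ is non-abelian); I expect this to require decomposing $\hat{w}_{\chi_i}$ into its $\mathsf{G}$-homogeneous components in $\mathsf{M}$ and matching degrees on both sides of the commutation identity to conclude that the span of these components is stable under left and right multiplication by $\mathfrak{B}$.
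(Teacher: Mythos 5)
Your proposal tracks the paper's proof exactly: the paper's own argument for this corollary is a one-line reference back to the proofs of Theorem \ref{1.03} and Proposition \ref{1.64}, which is precisely what you reconstruct (reduce $m_0$ so that $E_{p1}\eta_g m_0\eta_h E_{1q}\neq0$ throughout, define the $\hat{w}_{\chi_i}$, verify the commutation identity from the $2$-cocycle relation in Definition \ref{1.05} together with multiplicativity of $\chi_i$, and invert the character matrix to obtain $\mathsf{M}=\sum_i\mathfrak{B}\hat{w}_{\chi_i}$).

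The subtlety you flag at the end --- whether $\mathfrak{B}\hat{w}_{\chi_i}$ is a \emph{graded} subbimodule, which is what the irreducibility step actually needs --- is a genuine point the paper glosses over. The $(j,h)$-term of $\hat{w}_\chi$ sits in degree $g_j^{-1}h\,g_1\,\mathsf{deg}(m_0)\,g_1^{-1}h^{-1}g_j$, so $\hat{w}_\chi$ is homogeneous only when these all agree (e.g.\ when $\mathsf{G}$ is abelian, or more generally when $g_1\mathsf{deg}(m_0)g_1^{-1}$ commutes with $\mathsf{H}$ and with each $g_jg_1^{-1}$). When it is not homogeneous, matching $\mathsf{G}$-homogeneous components in the identity $E_{pq}\eta_{t'}\hat{w}_\chi=\chi(t')^{-1}\hat{w}_\chi E_{pq}\eta_{t'}$ yields $E_{pq}\eta_{t'}(\hat{w}_\chi)_t=\chi(t')^{-1}(\hat{w}_\chi)_{gtg^{-1}}E_{pq}\eta_{t'}$ with $g=\mathsf{deg}(E_{pq}\eta_{t'})$: the components get permuted by conjugation rather than preserved. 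This does show that $\sum_t\mathfrak{B}(\hat{w}_\chi)_t$ is a graded subbimodule (hence all of $\mathsf{M}$), but one still has to argue that each component $(\hat{w}_\chi)_t$ lies in $\mathfrak{B}\hat{w}_\chi$ itself, which neither your sketch nor the paper supplies. The paper asserts, just after the corollary, that $\hat{w}_\chi$ is homogeneous; under the abelian-$\mathsf{G}$ hypothesis (the setting of Corollary \ref{1.09} and Theorem \ref{1.83}, where this corollary is actually used downstream) that assertion is immediate and the issue evaporates, but in the stated generality it deserves a proof.
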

\begin{proof}
It is immediate of the proof of Theorem \ref{1.03}, and of Proposition \ref{1.64}.
%
%
\end{proof}

Observe that the element $\hat{w}_{\chi}$ in Corollary \ref{1.43} is homogeneous, and satisfies $\mathfrak{B} \hat{w}_{\chi}=\hat{w}_{\chi}\mathfrak{B}$. Recall that $\chi$ is an irreducible character of $\mathsf{H}$.

\begin{corollary}\label{1.09}
Let $\mathsf{G}$ be an abelian group, $\mathbb{F}$ an algebraically closed field with $\mathsf{char}(\mathbb{F})=0$, and $\mathfrak{A}$ a finite dimensional algebra over $\mathbb{F}$ with a $\mathsf{G}$-grading. If $\mathfrak{A}$ is simple graded, then any irreducible $\mathsf{G}$-graded $\mathfrak{A}$-bimodule $\mathsf{M}$ is equal to a $\mathsf{G}$-graded $\mathfrak{A}$-bimodule $\mathfrak{A} w$, for some homogeneous element $w\in\mathsf{M}$ which satisfies $\mathfrak{A} w = w\mathfrak{A}$.
\end{corollary}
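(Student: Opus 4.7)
The plan is to reduce this corollary directly to Theorem \ref{1.03}, since the hypotheses have been engineered so that the structural classification of $\mathfrak{A}$ puts us into the setting of that theorem.

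First I would invoke Theorem \ref{teoBahtSehgZaic}: because $\mathfrak{A}$ is a finite-dimensional $\mathsf{G}$-simple algebra over the algebraically closed field $\mathbb{F}$ of characteristic zero, we obtain a $\mathsf{G}$-graded isomorphism $\mathfrak{A}\cong M_n(\mathbb{F}^\sigma[\mathsf{H}])$ for some finite subgroup $\mathsf{H}\leq \mathsf{G}$ and some $\sigma\in\mathsf{Z}^2(\mathsf{H},\mathbb{F}^*)$, equipped with a canonical elementary $\mathsf{G}$-grading. Since $\mathsf{G}$ is abelian, the subgroup $\mathsf{H}$ is automatically abelian, which is the finite abelian hypothesis required by Theorem \ref{1.03}. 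Up to this identification, we may therefore treat $\mathfrak{A}$ as the algebra $\mathfrak{B}=M_n(\mathbb{F}^\sigma[\mathsf{H}])$ appearing in that theorem.

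Next I would check that $\mathsf{M}$ is a unitary $\mathsf{G}$-graded $\mathfrak{A}$-bimodule, which is the remaining missing hypothesis of Theorem \ref{1.03}. Let $\mathfrak{i}\in\mathfrak{A}_e$ be the unity of $\mathfrak{A}$. Consider the subsets
\[
\mathsf{L}=\{m\in\mathsf{M}: \mathfrak{i} m=0\},\qquad \mathsf{R}=\{m\in\mathsf{M}:m\mathfrak{i}=0\}.
\]
Each is easily seen to be a graded subbimodule of $\mathsf{M}$: they are $\mathbb{F}$-subspaces closed under the bimodule action (using the fact that $\mathfrak{i}$ is central and homogeneous of degree $e$), and they are graded because $\mathfrak{i} m_g=0$ for all $g$ whenever $\mathfrak{i} m=0$. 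Since $\mathsf{M}$ is $\mathsf{G}$-irreducible, each of $\mathsf{L}$ and $\mathsf{R}$ is either $\{0\}$ or $\mathsf{M}$. If either equals $\mathsf{M}$, then $\mathfrak{i}\mathsf{M}=0$ or $\mathsf{M}\mathfrak{i}=0$, which forces $\mathfrak{A}\mathsf{M}\widetilde{\mathfrak{A}}=0$ (since every element of $\mathfrak{A}$ factors through $\mathfrak{i}$), contradicting the definition of $\mathsf{G}$-irreducibility. Hence $\mathsf{L}=\mathsf{R}=\{0\}$, so $\mathfrak{i} m=m=m\mathfrak{i}$ for every $m\in\mathsf{M}$, i.e. $\mathsf{M}$ is unitary.

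Finally, Theorem \ref{1.03} applies verbatim to this $\mathsf{G}$-graded unitary irreducible $\mathfrak{B}$-bimodule $\mathsf{M}$ and produces a nonzero homogeneous $w\in\mathsf{M}$ with $\mathsf{M}=\mathfrak{B}w$ and $\mathfrak{B}w=w\mathfrak{B}$. Transporting back along the isomorphism $\mathfrak{A}\cong\mathfrak{B}$ yields the conclusion $\mathsf{M}=\mathfrak{A}w$ with $\mathfrak{A}w=w\mathfrak{A}$. No serious obstacle arises: the real content was absorbed into the combination of Theorem \ref{teoBahtSehgZaic} and Theorem \ref{1.03}, and the only small step is the verification that $\mathsf{M}$ is unitary, which is standard.
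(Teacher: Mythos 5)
Your proof is correct and follows the same route as the paper: transport the problem along the graded isomorphism $\mathfrak{A}\cong M_n(\mathbb{F}^\sigma[\mathsf{H}])$ furnished by Theorem \ref{teoBahtSehgZaic}, note $\mathsf{H}$ is abelian since $\mathsf{G}$ is, and then apply Theorem \ref{1.03}. You supply an explicit verification that an irreducible graded bimodule over a unitary algebra is automatically unitary (via the graded subbimodules $\mathsf{L},\mathsf{R}$ and the observation that $m-\mathfrak{i}m\in\mathsf{L}$), a step the paper merely asserts, so your version is if anything slightly more complete.
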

\begin{proof}
Let $\mathsf{M}$ be an irreducible $\mathsf{G}$-graded $\mathfrak{A}$-bimodule. By Theorem \ref{teoBahtSehgZaic}, there exist a finite subgroup $\mathsf{H}$ of $\mathsf{G}$, $\sigma\in\mathsf{Z}^2(\mathsf{H}, \mathbb{F}^*)$, and a $\mathsf{G}$-graded isomorphism (of algebras) $\psi$ from $\mathfrak{A}$ in $M_n(\mathbb{F}^\sigma[\mathsf{H}])$, where $M_n(\mathbb{F}^\sigma[\mathsf{H}])$ is graded with a canonical elementary $\mathsf{G}$-grading. Put $\mathfrak{B}=M_n(\mathbb{F}^\sigma[\mathsf{H}])$, and consider the (two-sided) action from $\mathfrak{B}$ in $\mathsf{M}$ defined by $b \cdot m\mapsto\psi^{-1}(b)m$ and $m \cdot b\mapsto m\psi^{-1}(b)$. From this, $\mathsf{M}$ is also a $\mathsf{G}$-graded $\mathfrak{B}$-bimodule. Since $\mathfrak{B}$ is unitary and $\mathfrak{A}=\psi^{-1}(\mathfrak{B})$, it follows that $\mathsf{M}$ is an irreducible $\mathsf{G}$-graded unitary $\mathfrak{B}$-bimodule. By Theorem \ref{1.03}, there exists a homogeneous element $w\in\mathsf{M}$ satisfying $\mathsf{M}=\mathfrak{B}  w$, such that $\mathfrak{B} w=w \mathfrak{B}$. Therefore, because $\mathfrak{B} w=\psi^{-1}(\mathfrak{B})w=\mathfrak{A}w$ and $w \mathfrak{B}=w\psi^{-1}(\mathfrak{B})=w\mathfrak{A}$, the result follows.
\end{proof}

\begin{theorem}\label{1.19}
	Let $\mathsf{G}$ be a group, $\mathsf{H}$ a finite abelian subgroup of $\mathsf{G}$, $\mathbb{F}$ an algebraically closed field with $\mathsf{char}(\mathbb{F})=0$, $\sigma\in\mathsf{Z}^2(\mathsf{H}, \mathbb{F}^*)$, and $\mathfrak{B}=M_n(\mathbb{F}^\sigma[\mathsf{H}])$ with a canonical elementary $\mathsf{G}$-grading. Let $\mathsf{M}$ be $\mathsf{G}$-graded unitary $\mathfrak{B}$-bimodule. If $\mathsf{M}$ is finitely generated (as bimodule), then $\mathsf{M}$ can be written as a finite direct sum of irreducible graded subbimodules of the form $\mathfrak{B} w$, where $w\in\mathsf{M}$ is a homogeneous element which satisfies $w\mathfrak{B}=\mathfrak{B} w$.
\end{theorem}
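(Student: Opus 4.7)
The plan is to reduce the theorem to Theorem~\ref{1.03} via the chain-condition machinery of the excerpt, and then to build the desired decomposition by applying the character-averaging construction of Corollary~\ref{1.43} to a finite generating set of $\mathsf{M}$.

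Since $\mathsf{M}$ is finitely generated over the finite dimensional graded algebra $\mathfrak{B}=M_n(\mathbb{F}^\sigma[\mathsf{H}])$, Corollary~\ref{1.42} delivers both weak chain conditions and Theorem~\ref{1.17} produces a weak graded composition series of finite length. Choose homogeneous generators $m_1,\dots,m_l$ of $\mathsf{M}$, and after the normalization performed at the opening of the proof of Theorem~\ref{1.03} form, for every $m_j$ and every irreducible character $\chi\in\widehat{\mathsf{H}}$, the element $\hat w_\chi^{(j)}$ defined as in~(\ref{1.61}). The same computation as in the proof of Theorem~\ref{1.03} gives $E_{pq}\eta_t\hat w_\chi^{(j)}=\chi(t)^{-1}\hat w_\chi^{(j)}E_{pq}\eta_t$ for all $p,q,t$, so $\mathfrak{B}\hat w_\chi^{(j)}=\hat w_\chi^{(j)}\mathfrak{B}$ whenever $\hat w_\chi^{(j)}\neq 0$; moreover, the surjection $\mathfrak{B}\to\mathfrak{B}\hat w_\chi^{(j)}$ given by $b\mapsto b\hat w_\chi^{(j)}$ is a homogeneous bimodule map out of the $\mathsf{G}$-simple algebra $\mathfrak{B}$ (Theorem~\ref{teoBahtSehgZaic}), so its kernel is a graded ideal of $\mathfrak{B}$ and each nonzero $\mathfrak{B}\hat w_\chi^{(j)}$ is irreducible graded.

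Using the invertibility of the character matrix (a consequence of Proposition~\ref{1.62}) together with Theorem~\ref{1.65} (which makes each $\chi$ a group homomorphism on the finite abelian $\mathsf{H}$), the matrix argument used in Proposition~\ref{1.64} and Theorem~\ref{1.03} yields $\mathfrak{B}m_j\mathfrak{B}\subseteq\sum_\chi\mathfrak{B}\hat w_\chi^{(j)}$, whence $\mathsf{M}=\sum_{j,\chi}\mathfrak{B}\hat w_\chi^{(j)}$. Converting this finite sum of irreducible graded subbimodules into a direct sum is then a routine semisimple-type argument: by Lemma~\ref{1.46} one extracts a maximal subfamily $\mathcal{F}$ of the $\mathfrak{B}\hat w_\chi^{(j)}$'s whose internal sum is direct; any other nonzero $\mathfrak{B}\hat w_\chi^{(j)}$ would either lie in $\bigoplus_{\mathsf{L}\in\mathcal{F}}\mathsf{L}$ or would meet it in a proper graded subbimodule, hence in $\{0\}$ by irreducibility (Definition~\ref{1.87}), in which case it could be added to $\mathcal{F}$, contradicting maximality.

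The delicate point I expect to need care with is the normalization step: for an arbitrary homogeneous generator $m_j$, the element $\hat w_\chi^{(j)}$ may vanish and then supplies no information about $\mathfrak{B}m_j\mathfrak{B}$. I would handle this by enlarging the generating set to $\{E_{ii}\eta_e m_j E_{kk}\eta_e : 1\le i,k\le n,\ 1\le j\le l\}$ (which still generates $\mathsf{M}$ because $\mathsf{M}$ is unitary, so $m_j=\sigma(e,e)^{-2}\sum_{i,k}E_{ii}\eta_e m_j E_{kk}\eta_e$) and then translating each nonzero such element into the $(1,1)$-position by multiplying with matrix units $E_{1i}$ and $E_{k1}$. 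Once this preparatory step guarantees that the family $\{\hat w_\chi^{(j)}\}$ captures all of $\mathsf{M}$, the direct-sum assembly described above completes the proof.
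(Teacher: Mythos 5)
Your plan is a genuine departure from the paper: instead of inducting along a weak graded composition series and peeling off one summand from each composition factor (which is what the paper does, via Corollary~\ref{1.43} applied to $\mathsf{M}_{d+1}/\mathsf{M}_d$), you build all the candidate summands $\mathfrak{B}\hat{w}_\chi^{(j)}$ at once from a normalized generating set and then extract a direct-sum subfamily. The matrix argument does yield $\mathfrak{B}m_j'\mathfrak{B}\subseteq\sum_\chi\mathfrak{B}\hat{w}_\chi^{(j)}$ without any irreducibility hypothesis on $\mathsf{M}$ (multiply the trace element $\sum_p E_{p1}\eta_{h_k}m_j'\eta_{h_k^{-1}}E_{1p}$ by matrix units on both sides to recover the individual terms and then translate the $\eta$-factors), your normalization fix is sound, and the maximal-subfamily extraction is fine -- though for that last step Lemma~\ref{1.46} is unnecessary, since the collection is finite. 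The trade-off is that this route must prove directly that each nonzero $\mathfrak{B}\hat{w}_\chi^{(j)}$ is irreducible graded; the paper sidesteps that by realizing each summand as a copy of the already-irreducible composition factor $\mathsf{M}_{d+1}/\mathsf{M}_d$ via the isomorphism theorem.

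That is exactly where your argument has a gap. The map $\phi\colon\mathfrak{B}\to\mathfrak{B}\hat w_\chi^{(j)}$, $b\mapsto b\hat w_\chi^{(j)}$, is \emph{not} a homomorphism of $\mathfrak{B}$-bimodules: it is left-$\mathfrak{B}$-linear, but for $c=E_{pq}\eta_t$ one has $\phi(bc)=bc\hat{w}_\chi^{(j)}=\chi(t)^{-1}b\hat{w}_\chi^{(j)}c=\chi(t)^{-1}\phi(b)c$, which equals $\phi(b)c$ only when $\chi$ is trivial. So the justification you give for the kernel being a graded ideal rests on a false premise. The kernel is nonetheless a graded two-sided ideal -- one checks directly, using the same twist, that $b\hat{w}_\chi^{(j)}=0$ forces $(bc)\hat{w}_\chi^{(j)}=\chi(t)^{-1}(b\hat{w}_\chi^{(j)})c=0$ -- and graded simplicity of $\mathfrak{B}$ forces it to be $\{0\}$ when $\hat{w}_\chi^{(j)}\neq0$. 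But even that only shows $\phi$ is an injective left-module map. To conclude irreducibility you still need the further observation that any graded subbimodule of $\mathfrak{B}\hat{w}_\chi^{(j)}$ pulls back under $\phi$ to a graded left ideal $J$ satisfying $(J\mathfrak{B})\hat{w}_\chi^{(j)}=J\hat{w}_\chi^{(j)}\mathfrak{B}\subseteq J\hat{w}_\chi^{(j)}$, whence $J\mathfrak{B}\subseteq J$ by injectivity, so $J$ is a graded two-sided ideal, hence $\{0\}$ or $\mathfrak{B}$. (Equivalently: $\mathfrak{B}\hat{w}_\chi^{(j)}$ is homogeneously isomorphic, as a graded bimodule, to $\mathfrak{B}$ with right action twisted by the graded automorphism $E_{ij}\eta_t\mapsto\chi(t)E_{ij}\eta_t$.) With this inserted, the rest of your argument goes through.
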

\begin{proof}
By Corollary \ref{1.42} and Lemma \ref{1.41}, we can obtain a finite sequence 
	\begin{equation}\label{1.21}
	\{0\}=\mathsf{M}_0\subsetneq \mathsf{M}_1 \subsetneq \cdots \subsetneq \mathsf{M}_{p-1} \subsetneq \mathsf{M}_{p}=\mathsf{M} \ ,
	\end{equation}
where $\mathsf{M}_i$'s are $\mathsf{G}$-graded $\mathfrak{B}$-subbimodules of $\mathsf{M}$ such that $\mathsf{M}_{i+1}/\mathsf{M}_i$ is an irreducible $\mathsf{G}$-graded $\mathfrak{B}$-bimodule for $i=0,1,\dots, p-1$.

By induction on $p$, let us show that there exist homogeneous elements $w_1,\dots, w_p\in\mathsf{M}$ such that $\mathsf{M}=\bigoplus_{i=1}^p \mathfrak{B} w_i$ with $\mathfrak{B}w_i=w_i\mathfrak{B}\neq0$ for all $i=1,\dots, p$.
	
	First, suppose $p=1$. Hence, $\mathsf{M}=\mathsf{M}_1$ with $\mathsf{M}/\mathsf{M}_0\cong_{\mathsf{G}}\mathsf{M}$ $\mathsf{G}$-irreducible. It follows from Theorem \ref{1.03} that there exists a nonzero homogeneous element $w_1\in \mathsf{M}_1$ such that $\mathsf{M}_1=\mathfrak{B}w_1$ with $w_1 \mathfrak{B}=\mathfrak{B}w_1\neq0$.
	
	Assume that the result is true for $p=d>1$, i.e. there exist nonzero homogeneous elements $w_1\in\mathsf{M}_1\setminus\mathsf{M}_0,  w_2\in\mathsf{M}_2\setminus\mathsf{M}_1, \dots, w_d\in\mathsf{M}_d\setminus\mathsf{M}_{d-1}$ satisfying 
	\begin{equation}\nonumber
	\mathsf{M}_d=\mathfrak{B} w_1\oplus \mathfrak{B} w_2\oplus\cdots\oplus \mathfrak{B} w_d \ ,
	\end{equation}
where each $w_i \mathfrak{B}=\mathfrak{B} w_i\neq0$ is an irreducible graded subbimodule, for all $i=1,\dots, d$.

Now, for $p=d+1$, notice that $\mathsf{M}_{d+1}=\mathsf{M}_d+\mathfrak{B} w\mathfrak{B}$ (by maximality of $\mathsf{M}_d$ in $\mathsf{M}_{d+1}$) for any $w\in \mathsf{M}_{d+1}\setminus\mathsf{M}_d$, $w\neq0$. Since $\mathsf{M}_{d+1}/\mathsf{M}_d$ is irreducible graded, by Corollary \ref{1.43}, there exist a nonzero homogeneous element $w_0\in\mathsf{M}_{d+1}\setminus\mathsf{M}_d$ and a character $\chi: \mathsf{H}\rightarrow \mathbb{F}^*$ such that $\hat{w}_{\chi}$ defined by
	\begin{equation}\nonumber
\hat{w}_{\chi}=\sum_{i=1}^n\left(\sum_{h\in \mathsf{H}} \chi(h)\sigma(h,h^{-1})^{-1} \eta_h E_{i1} w_0 E_{1i}\eta_{h^{-1}}\right) \neq0
	\end{equation}
satisfies $\mathsf{M}_{d+1}/\mathsf{M}_d=\mathfrak{B} (\hat{w}_{\chi}+\mathsf{M}_d)$, and $E_{ij}\eta_g(\hat{w}_{\chi}+\mathsf{M}_d)=\chi(g)^{-1} (\hat{w}_{\chi}+\mathsf{M}_d) E_{ij}\eta_g$ for all $i,j\in\{1,\dots,n\}$ and $g\in \mathsf{H}$. Let us consider $w_{d+1}\coloneqq\hat{w}_{\chi}$. As $\mathfrak{B} (w_{d+1}+\mathsf{M}_d)\neq 0+\mathsf{M}_d$, there exists $E_{kl}\eta_g \in\mathfrak{B}$ such that $E_{kl}\eta_g w_{d+1}=\chi(g)^{-1}w_{d+1}E_{kl}\eta_g\notin\mathsf{M}_d$. From this, using the equality $E_{kl}\eta_g=\sigma(gh^{-1},h)^{-1}\sigma(g,e)^{-1} E_{ki}\eta_{gh^{-1}} E_{ij}\eta_h E_{jl}\eta_{e}$, for any $i,j\in\{1,\dots,n\}$ and $h\in \mathsf{H}$, it is immediate that $E_{ij}\eta_h w_{d+1}=\chi(h)^{-1}w_{d+1}E_{ij}\eta_h\notin\mathsf{M}_d$ for any $i,j\in\{1,\dots,n\}$ and $h\in \mathsf{H}$. Consequently, we have that $\mathfrak{B} w_{d+1}\mathfrak{B}=\mathfrak{B} w_{d+1}=w_{d+1}\mathfrak{B}$ and $\mathfrak{B}w_{d+1}\cap\mathsf{M}_d=\{0\}$, and so $\mathsf{M}_{d+1}=\mathsf{M}_d\oplus\mathfrak{B} w_{d+1}$.
By Isomorphisms Theorem (see Theorem \ref{1.02}), we have that
	\begin{equation}\nonumber
\displaystyle\frac{\mathsf{M}_{d+1}}{\mathsf{M}_d}=\frac{\mathsf{M}_d \oplus\mathfrak{B} w_{d+1}}{\mathsf{M}_d}\cong_{\mathsf{G}} \frac{\mathfrak{B} w_{d+1}}{\mathsf{M}_d\cap\mathfrak{B} w_{d+1}}=\frac{\mathfrak{B} w_{d+1}}{\{0\}}\cong_{\mathsf{G}} \mathfrak{B} w_{d+1} \ ,
	\end{equation}
as  $\mathsf{G}$-graded $\mathfrak{B}$-bimodules. Therefore, we proved that $\mathfrak{B} w_{d+1}$ is an irreducible $\mathsf{G}$-graded $\mathfrak{B}$-subbimodule of $\mathsf{M}$ such that $w_{d+1}\mathfrak{B}=\mathfrak{B}w_{d+1}$, and hence,
	\begin{equation}\nonumber
\mathsf{M}_{d+1}=\mathsf{M}_d \oplus\mathfrak{B} w_{d+1} =\mathfrak{B} w_1\oplus\cdots\oplus \mathfrak{B} w_d \oplus\mathfrak{B} w_{d+1} ,
	\end{equation}
where each $\mathfrak{B} w_i$ is an irreducible graded subbimodule of $\mathsf{M}$ satisfying $\mathfrak{B} w_i=w_i \mathfrak{B}\neq\{0\}$ for all $i=1,\dots, d+1$. Furthermore, by induction, the result is proved.
\end{proof}

Under the same conditions of Theorem \ref{1.19}, take $w_1,\dots,w_l\in\mathsf{M}$ such that $\mathsf{M}=\bigoplus_{k=1}^l \mathfrak{B}w_k$, with $\mathfrak{B}w_k=w_k\mathfrak{B}$ irreducible graded for all $k=1,\dots,l$. By Corollary \ref{1.43} (or by the proof of Theorem \ref{1.03}, and Proposition \ref{1.64}), for all $k=1,\dots,l$, there exist a map $\chi_k: \mathsf{\mathsf{H}}\rightarrow \mathbb{F}^*$ ($\chi_k$ is the irreducible character associated with $w_k$) satisfying 
	\begin{equation}\nonumber
E_{ij}\eta_g w_k=\chi_k(g)^{-1}w_k E_{ij}\eta_g
	\end{equation}
for any $g\in\mathsf{H}$ and $i,j=1,\dots,n$. In addition, considering the map $\varphi: \mathbb{N}_l\times\mathsf{\mathsf{H}}\rightarrow \mathbb{F}^*$ defined by $\varphi(k,g)=\chi_k(g)^{-1}$, where $\mathbb{N}_l=\{1,\dots,l\}$, we have that $E_{ij}\eta_g w_k=\varphi(k,g)w_k E_{ij}\eta_g$, for any $k=1,\dots,l$,  $g\in\mathsf{H}$ and $i,j=1,\dots,n$.

To conclude this section, let us present a result that combines all the previous results.

\begin{theorem}\label{1.83}
Let $\mathbb{F}$ be an algebraically closed field, $\mathsf{G}$ an abelian group and $\mathfrak{A}$ a finite dimensional algebra over $\mathbb{F}$ with a $\mathsf{G}$-grading. Suppose that $\mathsf{char}(\mathbb{F})=0$, and $\mathfrak{A}$ is $\mathsf{G}$-simple. If $\mathsf{M}$ is a finitely generated $\mathsf{G}$-graded $\mathfrak{A}$-bimodule, then there exist nonzero homogeneous elements $w_1, w_2,\dots,w_n\in\mathsf{M}$ such that 
	\begin{equation}\nonumber
\mathsf{M}=\mathfrak{A} w_1\oplus \mathfrak{A} w_2\oplus \cdots \oplus \mathfrak{A} w_n \ ,
	\end{equation}
where $w_i \mathfrak{A}=\mathfrak{A} w_i\neq0$ for all $i=1,2,\dots,n$, and $\mathfrak{A} w_i$ is $\mathsf{G}$-irreducible.
\end{theorem}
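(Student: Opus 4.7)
The plan is to reduce the statement to Theorem \ref{1.19}, which already treats the case of a matrix algebra over a twisted group algebra equipped with a canonical elementary grading. First, I would invoke the structure Theorem \ref{teoBahtSehgZaic}: since $\mathfrak{A}$ is finite-dimensional and $\mathsf{G}$-simple over the algebraically closed field $\mathbb{F}$ of characteristic zero, there exists a graded isomorphism $\mathfrak{A} \cong \mathfrak{B}$, where $\mathfrak{B} = M_k(\mathbb{F}^\sigma[\mathsf{H}])$ carries a canonical elementary $\mathsf{G}$-grading, $\mathsf{H}$ is a finite subgroup of $\mathsf{G}$, and $\sigma \in \mathsf{Z}^2(\mathsf{H},\mathbb{F}^*)$. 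Since $\mathsf{G}$ is abelian, so is $\mathsf{H}$, placing us exactly in the hypotheses of Theorem \ref{1.19}.

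Transporting the $\mathfrak{A}$-bimodule structure on $\mathsf{M}$ through this isomorphism turns $\mathsf{M}$ into a finitely generated $\mathsf{G}$-graded $\mathfrak{B}$-bimodule on the same underlying graded vector space. By Lemma \ref{1.71} it is finite-dimensional over $\mathbb{F}$, so Theorem \ref{1.27} supplies a decomposition $\mathsf{M} = \mathsf{M}_{00} \oplus \mathsf{M}_{10} \oplus \mathsf{M}_{01} \oplus \mathsf{M}_{11}$. Any cyclic summand of the form $\mathfrak{A} w_i$ with $\mathfrak{A} w_i = w_i \mathfrak{A} \neq 0$ is fixed by the unit $1_\mathfrak{A}$ on both sides, and hence lives inside $\mathsf{M}_{11}$. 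This reduces the proof to the unitary case, as in Corollary \ref{1.31}: it suffices to apply Theorem \ref{1.19} to the finitely generated unitary $\mathsf{G}$-graded $\mathfrak{B}$-bimodule $\mathsf{M}_{11}$.

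Theorem \ref{1.19} then produces homogeneous elements $w_1, \ldots, w_n \in \mathsf{M}$ with $\mathsf{M} = \bigoplus_{i=1}^{n} \mathfrak{B} w_i$, each $\mathfrak{B} w_i = w_i \mathfrak{B}$ being $\mathsf{G}$-irreducible; pulling back along $\mathfrak{A} \cong \mathfrak{B}$ yields the asserted decomposition over $\mathfrak{A}$. I do not anticipate any new obstacle here, since the technical core---constructing the character-twisted generators $\hat{w}_{\chi}$ via the irreducible characters of $\mathsf{H}$, verifying that $\mathfrak{B}\hat{w}_{\chi} = \hat{w}_{\chi}\mathfrak{B}$, and inductively peeling off irreducible summands along a weak graded composition series (whose existence is guaranteed by Corollary \ref{1.42} together with Lemma \ref{1.41})---has already been carried out in the proofs of Theorems \ref{1.03} and \ref{1.19}. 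The only residual task is bookkeeping: verifying that the graded algebra isomorphism transports the grading, finite generation, irreducibility and the two-sided cyclicity condition $\mathfrak{A} w = w\mathfrak{A}$, all of which is routine.
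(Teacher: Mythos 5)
Your overall route matches the paper's: invoke Theorem \ref{teoBahtSehgZaic} to replace $\mathfrak{A}$ by $\mathfrak{B}=M_k(\mathbb{F}^\sigma[\mathsf{H}])$ with its canonical elementary grading, transport the bimodule structure across the graded isomorphism, and hand the problem to Theorem \ref{1.19}. The detour through Theorem \ref{1.27}, however, opens a gap. You observe that any cyclic summand $\mathfrak{A} w_i$ with $\mathfrak{A} w_i = w_i\mathfrak{A}\neq0$ must sit inside $\mathsf{M}_{11}$, and you conclude that ``it suffices to apply Theorem \ref{1.19} to the \dots bimodule $\mathsf{M}_{11}$.'' That observation is a \emph{necessary} condition on the asserted conclusion, not a reduction of the problem: Theorem \ref{1.19} applied to $\mathsf{M}_{11}$ yields a decomposition of $\mathsf{M}_{11}$, whereas the theorem claims $\mathsf{M}=\bigoplus_i\mathfrak{A} w_i$. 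To close the argument you must show $\mathsf{M}_{00}=\mathsf{M}_{01}=\mathsf{M}_{10}=\{0\}$, i.e.\ that $\mathsf{M}$ is a \emph{unitary} $\mathfrak{B}$-bimodule, and nothing in your proposal does this. Without it the statement is even false: a nonzero finite-dimensional graded space with the zero left and right actions is a finitely generated $\mathsf{G}$-graded $\mathfrak{A}$-bimodule, yet $\mathfrak{A} w=0$ for every $w$, so no decomposition of the required form exists.

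For context, the paper's own proof elides the same point, simply writing ``since $\mathfrak{B}$ is unitary, it follows that $\mathsf{M}$ is unitary,'' which does not follow (a bimodule over a unitary algebra need not be unitary). The missing ingredient is the unitary hypothesis on $\mathsf{M}$, which is already explicit in Theorem \ref{1.19} and in Corollary \ref{1.31}; once $\mathsf{M}$ is assumed unitary, Corollary \ref{1.31} gives $\mathsf{M}=\mathsf{M}_{11}$ and the rest of your argument is exactly the paper's. So you have identified the right tools and the right reduction, but you should either add the unitary hypothesis up front or explicitly justify why $\mathsf{M}=\mathsf{M}_{11}$; as written, the step from ``$\mathsf{M}_{11}$ decomposes'' to ``$\mathsf{M}$ decomposes'' is unproved.
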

\begin{proof}
By Theorem \ref{teoBahtSehgZaic}, there exists a $\mathsf{G}$-graded isomorphism $\psi$ from $\mathfrak{A}$ in $M_n(\mathbb{F}^\sigma[\mathsf{H}])$, for some finite subgroup $\mathsf{H}$ of $\mathsf{G}$ and $\sigma\in\mathsf{Z}^2(\mathsf{H}, \mathbb{F}^*)$, and $M_n(\mathbb{F}^\sigma[\mathsf{H}])$  with a canonical elementary $\mathsf{G}$-grading. Put $\mathfrak{B}=M_n(\mathbb{F}^\sigma[\mathsf{H}])$. We have that $\mathsf{M}$ is a $\mathsf{G}$-graded $\mathfrak{A}$-bimodule via left action $b\cdot m=\psi^{-1}(b)\cdot m$ and right action $m \cdot b=m \cdot \psi^{-1}(b)$ for any $b\in\mathfrak{B}$ and $m\in\mathsf{M}$. Since $\mathfrak{B}$ is unitary, it follows that $\mathsf{M}$ is unitary, and hence, we have the same conditions of Theorem \ref{1.19}. Therefore, the result follows.
\end{proof}

Note that Theorem \ref{1.83} simplifies the way in which the elements of a finitely generated $\mathsf{G}$-graded $\mathfrak{A}$-bimodule $\mathsf{M}$ can be presented. \textit{A priori}, being $x_1,\dots,x_m\in\mathsf{M}$ a generating set of $\mathsf{M}$, given any $y\in\mathsf{M}$, there are $a_{ij},b_{ij}\in\mathfrak{A}$, with $i=1,\dots,m$ and $j=1,\dots,k_i$, such that $y=\sum_{i=1}^{m}\left(\sum_{j=1}^{k_i} a_{ij}x_ib_{ij}\right)$. Using the previous theorem, we can write only $y=a_1w_1+\cdots+a_nw_n$ for some $a_1,\dots,a_n\in\mathfrak{A}$. Therefore, Theorem \ref{1.83} plays an important tool when it is necessary to know the structure/properties of a given (graded) bimodule.


\subsection{
Graded Bimodules over Semisimple Graded Algebras}

%



Suppose that $\mathfrak{A}$ is a unitary algebra. If $\epsilon\in\mathfrak{A}$ is a central idempotent element, i.e. $\epsilon\in\mathcal{Z}(\mathfrak{A})$ with $\epsilon^2=\epsilon$, then $1-\epsilon\in\mathfrak{A}$ is also a central idempotent element of $\mathfrak{A}$ such that $\epsilon$ and $(1-\epsilon)$ are orthogonal when $\epsilon\neq0$, i.e. $\epsilon(1-\epsilon)=(1-\epsilon)\epsilon=0$. Therefore, given $x\in\mathfrak{A}$, notice that $x=x\epsilon+x(1-\epsilon)$, and hence, it is not difficult to see that $\mathfrak{A}=\mathfrak{A} \epsilon \oplus \mathfrak{A}(1-\epsilon)$. This decomposition is called the {\bf Peirce decomposition} of $\mathfrak{A}$ relative to $\epsilon$. Naturally, we can extend this definition to $n$ idempotent elements of $\mathfrak{A}$, as follows. Let $\epsilon_1,\dots,\epsilon_n\in \mathfrak{A}$ be distinct central orthogonal idempotent elements. Without loss of generality, suppose that $1=\sum_{i=1}^n \epsilon_i$. Given $x\in\mathfrak{A}$, we have  $x=x1=\sum_{i=1}^n x\epsilon_i$, and hence, $\mathfrak{A}= \mathfrak{A} \epsilon_1 \oplus\cdots\oplus \mathfrak{A} \epsilon_n$ is the Peirce decomposition of $\mathfrak{A}$ relative to $\epsilon_1,\dots,\epsilon_n$.

We remember that a graded algebra $\mathfrak{A}$ is called \textit{left (resp. right) semisimple graded} when there exist simple graded left (resp. right) ideals $\mathfrak{A}_r$'s, $r\in I$ (not necessarily finite), such that $\mathfrak{A}=\bigoplus_{r\in I}\mathfrak{A}_r$. Analogously, we define a \textit{two-sided} semisimple graded algebra, in the sense of two-sided ideals. More precisely, we say that the graded algebra $\mathfrak{A}$ is \textbf{weak semisimple graded} if there exist simple graded two-sided ideals $\mathfrak{A}_r$'s, $r\in I$ (not necessarily finite), such that $\mathfrak{A}=\bigoplus_{r\in I}\mathfrak{A}_r$. Obviously, this last sum is finite when $\mathfrak{A}$ is unitary. In fact, there exist $\mathfrak{i}_{r_1}\in\mathfrak{A}_{r_1}$, $\dots$, $\mathfrak{i}_{r_n}\in\mathfrak{A}_{r_n}$ (not all zero) such that $1_{\mathfrak{A}}=\sum_{j=1}^n\mathfrak{i}_{r_j}\in \bigoplus_{j=1}^n\mathfrak{A}_{r_j}$. Put $I_0=\{r_1,\dots,r_n\}$, and take any $y\in\bigcup_{k\in I\setminus I_0}\mathfrak{A}_k$. Since $1_{\mathfrak{A}}x=x1_{\mathfrak{A}}=x$ for any $x\in\mathfrak{A}$, we have that $1_{\mathfrak{A}}y=y$, but this implies that $y=y\cdot 1_{\mathfrak{A}}=y \left(\sum_{j=1}^n\mathfrak{i}_{r_j}\right)=\sum_{j=1}^n(y\mathfrak{i}_{r_j})$ which belongs to $\bigoplus_{l\in I_0}\mathfrak{A}_{l}$, because the $\mathfrak{A}_{l}$'s are graded two-sided ideals of $\mathfrak{A}$. From this, it follows that $y=0$, since $\displaystyle\left(\bigoplus_{l\in I_0}\mathfrak{A}_{l}\right)\bigcap \left(\bigoplus_{k\in I\setminus I_0}\mathfrak{A}_k\right)=\{0\}$. Therefore, $\mathfrak{A}=\bigoplus_{r=1}^n\mathfrak{A}_r$ when $\mathfrak{A}$ is unitary. It is not difficult to prove that the algebras $\mathfrak{A}_r$'s are unitary.

\begin{theorem}\label{1.29}
Let $\mathfrak{A}$ and $\widetilde{\mathfrak{A}}$ be two $\mathsf{G}$-graded unitary algebras and $\mathsf{M}$ a unitary $\mathsf{G}$-graded $(\mathfrak{A},\widetilde{\mathfrak{A}})$-bimodule. If $\mathfrak{A}$ and $\widetilde{\mathfrak{A}}$ are weak semisimple graded, then there exist central orthogonal idempotent elements $\mathfrak{i}_1,\dots,\mathfrak{i}_p\in\mathfrak{A}$ and $\hat{\mathfrak{i}}_1,\dots,\hat{\mathfrak{i}}_q\in\widetilde{\mathfrak{A}}$ such that $\mathsf{M}=\displaystyle\bigoplus_{r,s=1}^{p,q} \mathfrak{i}_r \mathsf{M} \hat{\mathfrak{i}}_s$, $\mathfrak{A}=\displaystyle\bigoplus_{r=1}^p\mathfrak{A}_r$ and $\widetilde{\mathfrak{A}}=\displaystyle\bigoplus_{s=1}^q\widetilde{\mathfrak{A}}_s$, where $\mathfrak{A}_r=\mathfrak{A}\mathfrak{i}_r$ and $\widetilde{\mathfrak{A}}_s=\widetilde{\mathfrak{A}}\hat{\mathfrak{i}}_s$ are $\mathsf{G}$-simple two-sided ideals. In addition, each $\mathfrak{i}_r \mathsf{M} \hat{\mathfrak{i}}_s$ is either equal to zero or a faithful $\mathsf{G}$-graded $(\mathfrak{A}_r,\widetilde{\mathfrak{A}}_s)$-bimodule.
\end{theorem}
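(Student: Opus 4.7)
The plan is to manufacture a family of central orthogonal idempotents from the weak semisimple structure and then slice up $\mathsf{M}$ with them, in the spirit of a classical Peirce decomposition. Since $\mathfrak{A}$ is unitary and weak semisimple graded, the discussion preceding the statement already supplies a finite decomposition $\mathfrak{A}=\bigoplus_{r=1}^p\mathfrak{A}_r$ into $\mathsf{G}$-simple two-sided graded ideals, each of them unitary; let $\mathfrak{i}_r$ denote the identity of $\mathfrak{A}_r$. I would first verify that $\{\mathfrak{i}_1,\dots,\mathfrak{i}_p\}$ is a complete set of central orthogonal idempotents of $\mathfrak{A}$. Orthogonality follows from $\mathfrak{A}_r\mathfrak{A}_s\subseteq\mathfrak{A}_r\cap\mathfrak{A}_s=\{0\}$ when $r\neq s$; idempotence from $\mathfrak{i}_r$ being the identity of $\mathfrak{A}_r$; and centrality from writing $a=\sum_s a_s$ with $a_s\in\mathfrak{A}_s$ and computing $a\mathfrak{i}_r=a_r=\mathfrak{i}_r a$, since every cross term $a_s\mathfrak{i}_r$ with $s\neq r$ vanishes. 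One also obtains $1_\mathfrak{A}=\sum_{r=1}^p\mathfrak{i}_r$. An identical argument applied to $\widetilde{\mathfrak{A}}$ yields $\hat{\mathfrak{i}}_1,\dots,\hat{\mathfrak{i}}_q$.

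Next I would exploit the unitarity of $\mathsf{M}$. For any $m\in\mathsf{M}$ one has $m=1_\mathfrak{A}\,m\,1_{\widetilde{\mathfrak{A}}}=\sum_{r,s}\mathfrak{i}_r m\hat{\mathfrak{i}}_s$, whence $\mathsf{M}=\sum_{r,s}\mathfrak{i}_r\mathsf{M}\hat{\mathfrak{i}}_s$. Directness follows by taking a relation $\sum_{r,s}x_{rs}=0$ with $x_{rs}\in\mathfrak{i}_r\mathsf{M}\hat{\mathfrak{i}}_s$, multiplying on the left by $\mathfrak{i}_{r_0}$ and on the right by $\hat{\mathfrak{i}}_{s_0}$, and using orthogonality/idempotence to collapse the sum to $x_{r_0 s_0}=0$. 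Each piece $\mathfrak{i}_r\mathsf{M}\hat{\mathfrak{i}}_s$ is an $(\mathfrak{A}_r,\widetilde{\mathfrak{A}}_s)$-subbimodule: for $a\in\mathfrak{A}_r$, centrality of $\mathfrak{i}_r$ yields $a(\mathfrak{i}_r m\hat{\mathfrak{i}}_s)=(a\mathfrak{i}_r)m\hat{\mathfrak{i}}_s=\mathfrak{i}_r(am)\hat{\mathfrak{i}}_s$, and symmetrically on the right. It is graded because the identity of a graded unitary algebra lies in the neutral component (a standard argument by comparing homogeneous pieces), so $\mathfrak{i}_r$ and $\hat{\mathfrak{i}}_s$ are homogeneous of degree $e$ and $\mathfrak{i}_r\mathsf{M}\hat{\mathfrak{i}}_s=\bigoplus_g\mathfrak{i}_r\mathsf{M}_g\hat{\mathfrak{i}}_s$.

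Finally, assume $\mathfrak{i}_r\mathsf{M}\hat{\mathfrak{i}}_s\neq\{0\}$ and establish faithfulness. Consider the left annihilator $J_l=\{a\in\mathfrak{A}_r:a\cdot\mathfrak{i}_r\mathsf{M}\hat{\mathfrak{i}}_s=\{0\}\}$. Since $\mathfrak{i}_r\mathsf{M}\hat{\mathfrak{i}}_s$ is $\mathfrak{A}_r$-stable on both sides (as shown above), $J_l$ is a two-sided ideal of $\mathfrak{A}_r$; gradedness of $J_l$ follows because annihilation of a homogeneous element of $\mathfrak{i}_r\mathsf{M}\hat{\mathfrak{i}}_s$ partitions into annihilation by each homogeneous component of $a$. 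The $\mathsf{G}$-simplicity of $\mathfrak{A}_r$ forces $J_l=\{0\}$ or $J_l=\mathfrak{A}_r$, and the second option would give $\mathfrak{i}_r\mathsf{M}\hat{\mathfrak{i}}_s=\mathfrak{i}_r\cdot(\mathfrak{i}_r\mathsf{M}\hat{\mathfrak{i}}_s)=\{0\}$, a contradiction; hence $J_l=\{0\}$ and $\mathfrak{i}_r\mathsf{M}\hat{\mathfrak{i}}_s$ is left-faithful over $\mathfrak{A}_r$. Right-faithfulness over $\widetilde{\mathfrak{A}}_s$ is symmetric. The main obstacle I anticipate is the careful bookkeeping that $J_l$ is genuinely two-sided and graded; once the centrality of each $\mathfrak{i}_r$ in the full algebra $\mathfrak{A}$ is in hand, the remaining steps unfold as a routine Peirce-type computation.
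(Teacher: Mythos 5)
Your argument is essentially the same as the paper's: pass to the decomposition $\mathfrak{A}=\bigoplus_r\mathfrak{A}_r$ into $\mathsf{G}$-simple graded ideals, take their identities $\mathfrak{i}_r$ (automatically central, orthogonal, summing to $1_\mathfrak{A}$), use unitarity of $\mathsf{M}$ to expand $m=\mathfrak{i}\,m\,\hat{\mathfrak{i}}=\sum_{r,s}\mathfrak{i}_r m\hat{\mathfrak{i}}_s$, and invoke $\mathsf{G}$-simplicity of the factors to get faithfulness. Your write-up spells out a few steps the paper takes for granted (centrality of the $\mathfrak{i}_r$, gradedness and two-sidedness of the annihilator), but the route and the key ideas coincide.
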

\begin{proof}
Since $\mathfrak{A}$ and $\widetilde{\mathfrak{A}}$ are unitary and weak semisimple graded, there exist simple graded ideals $\mathfrak{A}_1, \dots,\mathfrak{A}_p\subset\mathfrak{A}$ and $\widetilde{\mathfrak{A}}_1,\dots,\widetilde{\mathfrak{A}}_q\subset\widetilde{\mathfrak{A}}$ such that $\mathfrak{A}=\bigoplus_{r=1}^p\mathfrak{A}_r$, $\widetilde{\mathfrak{A}}=\bigoplus_{s=1}^q\widetilde{\mathfrak{A}}_s$, and we can take central orthogonal idempotent elements $\mathfrak{i}_1,\dots,\mathfrak{i}_p\in\mathfrak{A}$ and $\hat{\mathfrak{i}}_1,\dots,\hat{\mathfrak{i}}_q\in\widetilde{\mathfrak{A}}$ such that $\mathfrak{i}=\sum_{r=1}^p\mathfrak{i}_r$ and $\hat{\mathfrak{i}}=\sum_{s=1}^q\hat{\mathfrak{i}}_s$, where $\mathfrak{i}_r\in\mathfrak{A}_r$ and $\hat{\mathfrak{i}}_s\in\widetilde{\mathfrak{A}}_s$. Note that $\mathfrak{A}_r=\mathfrak{A}_r\mathfrak{i}_r$ and $\widetilde{\mathfrak{A}}_s=\widetilde{\mathfrak{A}}_s\hat{\mathfrak{i}}_s$.

Since $\mathsf{M}$ is unitary, we have $\mathsf{M}=\mathfrak{i}\mathsf{M}\hat{\mathfrak{i}}$. Put $\widetilde{\mathsf{M}}=\sum_{r,s=1}^{p,q} \mathfrak{i}_r \mathsf{M} \hat{\mathfrak{i}}_s$. Since the $\mathfrak{i}_r$'s and $\hat{\mathfrak{i}}_s$'s are orthogonal elements, it follows that $\widetilde{\mathsf{M}}=\bigoplus_{r,s=1}^{p,q} \mathfrak{i}_r \mathsf{M} \hat{\mathfrak{i}}_s$. Obviously, $\widetilde{\mathsf{M}}\subseteq \mathsf{M}$. On the other hand, for any $x\in \mathsf{M}$, we have 
	$
x=\mathfrak{i} x\hat{\mathfrak{i}}=\left(\sum_{r=1}^p \mathfrak{i}_r \right) x \left(\sum_{s=1}^q \hat{\mathfrak{i}}_s \right)=\sum_{r,s=1}^{p,q} \mathfrak{i}_r x \hat{\mathfrak{i}}_s \in \widetilde{\mathsf{M}}.$ 
Hence, $\mathsf{M}\subseteq \widetilde{\mathsf{M}}$, and so $\mathsf{M}=\widetilde{\mathsf{M}}$.

Now, it is immediate that $\mathfrak{i}_r \mathsf{M} \hat{\mathfrak{i}}_s$ is a $\mathsf{G}$-graded $(\mathfrak{A}_r,\widetilde{\mathfrak{A}}_s)$-bimodule for all $r=1,\dots, p$ and $s=1,\dots, q$, because the $\mathfrak{A}_r$'s and $\widetilde{\mathfrak{A}}_s$'s are graded ideals of $\mathfrak{A}$ and $\widetilde{\mathfrak{A}}$, respectively. And by the simplicities of $\mathfrak{A}_r$'s and $\widetilde{\mathfrak{A}}_s$'s, it follows that $\mathfrak{i}_r \mathsf{M} \hat{\mathfrak{i}}_s$ is a faithful bimodule when it is nonzero.
\end{proof}

By Theorem \ref{1.29}, a particular case is given when there exist $r_0\in\{1,\dots,p\}$ and $s_0\in\{1,\dots,q\}$ such that $\mathfrak{A}_{r_0}$ and $\mathfrak{A}_{s_0}$ are finite-dimensional and graded isomorphic (as graded algebras). Hence, if the basis field $\mathbb{F}$ is algebraically closed with $\mathsf{char}(\mathbb{F})=0$, $\mathsf{G}$ is abelian, and $\mathfrak{i}_{r_0} \mathsf{M} \hat{\mathfrak{i}}_{s_0}$ is finitely generated, then there exist homogeneous elements $d_{1},\dots,d_{k_{(r_0,s_0)}}\in \mathsf{M}$ such that $\mathfrak{i}_{r_0} \mathsf{M} \hat{\mathfrak{i}}_{s_0}=\bigoplus_{j=1}^{k_{(r_0,s_0)}}\mathfrak{A}_{r_0} d_j$, where $\mathfrak{A}_{r_0} d_{j}$ is a $\mathsf{G}$-simple $\mathfrak{A}_{r_0}$-bimodule with $\mathfrak{A}_{r_0}d_{j}=d_{j}\widetilde{\mathfrak{A}}_{s_0}$ for all $j=1,\dots,k_{(r_0,s_0)}$. In fact, assume that $\psi:\mathfrak{A}_{r_0}\rightarrow \mathfrak{A}_{s_0}$ is a graded isomorphism of graded algebras, and put $\mathsf{V}_{(r_0,s_0)}=\mathfrak{i}_{r_0} \mathsf{M} \hat{\mathfrak{i}}_{s_0}$. Consider the right action on $\mathsf{V}_{(r_0,s_0)}$ given by $x\cdot a=x\psi(a)$ for any $x\in\mathsf{V}_{(r_0,s_0)}$ and $a\in\mathfrak{A}_{r_0}$. Hence, we can see $\mathsf{V}_{(r_0,s_0)}$ as an $\mathfrak{A}_{r_0}$-bimodule. The result follows from Theorem \ref{1.83}.

%

\begin{corollary}\label{1.33}
Let $\mathsf{G}$ be an abelian group, $\mathbb{F}$ an algebraically closed field with $\mathsf{char}(\mathbb{F})=0$, $\mathfrak{A}$ a finite dimensional unitary $\mathbb{F}$-algebra with a $\mathsf{G}$-grading, and $\mathsf{M}$ a $\mathsf{G}$-graded unitary $\mathfrak{A}$-bimodule. If $\mathfrak{A}$ is weak semisimple graded and $\mathsf{M}$ is finitely generated, then there exists a decomposition of $\mathsf{M}$ of the form $\bigoplus_{i,j=1}^p \mathsf{M}_{ij}$, $p\in\mathbb{N}$, where each $\mathsf{M}_{ij}$ is either equal to zero or a faithful $\mathsf{G}$-graded $\mathfrak{A}$-bimodule, and each $\mathsf{M}_{ii}$ can be written as a finite sum of irreducible $\mathsf{G}$-graded $\mathfrak{A}$-subbimodules of the form $\mathfrak{A} w$, for some homogeneous element $w\in\mathsf{M}$ which satisfies $\mathfrak{A} w = w\mathfrak{A}$.
\end{corollary}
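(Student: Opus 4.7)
The plan is to combine Theorem~\ref{1.29} and Theorem~\ref{1.83}, reducing the diagonal blocks of the Peirce-type decomposition to the setting where Theorem~\ref{1.83} directly applies.

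First I would apply Theorem~\ref{1.29} with $\widetilde{\mathfrak{A}} = \mathfrak{A}$. This yields central orthogonal idempotents $\mathfrak{i}_1,\dots,\mathfrak{i}_p\in\mathfrak{A}$ with $\mathfrak{A}=\bigoplus_{r=1}^p\mathfrak{A}_r$ where each $\mathfrak{A}_r=\mathfrak{A}\mathfrak{i}_r$ is a $\mathsf{G}$-simple graded (two-sided) ideal of $\mathfrak{A}$, and
\[
\mathsf{M} \;=\; \bigoplus_{i,j=1}^{p} \mathfrak{i}_i\,\mathsf{M}\,\mathfrak{i}_j,
\]
with each summand either zero or a faithful $\mathsf{G}$-graded $(\mathfrak{A}_i,\mathfrak{A}_j)$-bimodule. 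Setting $\mathsf{M}_{ij}=\mathfrak{i}_i\mathsf{M}\mathfrak{i}_j$ gives the required decomposition with the required faithfulness, noting that a faithful $(\mathfrak{A}_i,\mathfrak{A}_j)$-bimodule is faithful as a $\mathsf{G}$-graded $\mathfrak{A}$-bimodule since for $r\neq i$ we have $\mathfrak{A}_r \mathsf{M}_{ij} = \mathfrak{A}_r \mathfrak{i}_i \mathsf{M}\mathfrak{i}_j = 0$ by orthogonality, and analogously on the right.

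Next I would analyze the diagonal blocks. Fix $i\in\{1,\dots,p\}$. The block $\mathsf{M}_{ii}$ is naturally a unitary $\mathsf{G}$-graded $\mathfrak{A}_i$-bimodule, since $\mathfrak{i}_i$ is the unit of $\mathfrak{A}_i$ and $\mathfrak{i}_i x \mathfrak{i}_i = x$ for all $x\in\mathsf{M}_{ii}$. Moreover, if $\{x_1,\dots,x_n\}$ is a finite homogeneous generating set for $\mathsf{M}$ as $\mathfrak{A}$-bimodule, then $\{\mathfrak{i}_i x_k \mathfrak{i}_i : k=1,\dots,n\}$ is a finite homogeneous generating set for $\mathsf{M}_{ii}$ as $\mathfrak{A}_i$-bimodule (multiplying any expansion of $y\in\mathsf{M}_{ii}$ on both sides by $\mathfrak{i}_i$ and using centrality of $\mathfrak{i}_i$). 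Since $\mathfrak{A}_i$ is finite-dimensional $\mathsf{G}$-simple with $\mathsf{G}$ abelian and $\mathbb{F}$ algebraically closed of characteristic $0$, Theorem~\ref{1.83} produces nonzero homogeneous elements $w_1^{(i)},\dots,w_{k_i}^{(i)} \in \mathsf{M}_{ii}$ with
\[
\mathsf{M}_{ii} \;=\; \mathfrak{A}_i w_1^{(i)} \oplus \cdots \oplus \mathfrak{A}_i w_{k_i}^{(i)},
\]
where each $\mathfrak{A}_i w_k^{(i)} = w_k^{(i)} \mathfrak{A}_i$ is a $\mathsf{G}$-irreducible $\mathfrak{A}_i$-bimodule.

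Finally, I would upgrade this to a decomposition as $\mathfrak{A}$-bimodule. Since $w_k^{(i)}\in\mathsf{M}_{ii}$, the orthogonality relations $\mathfrak{A}_r \mathfrak{i}_i = 0 = \mathfrak{i}_i \mathfrak{A}_r$ for $r\neq i$ imply $\mathfrak{A} w_k^{(i)} = \mathfrak{A}\mathfrak{i}_i w_k^{(i)} = \mathfrak{A}_i w_k^{(i)}$ and similarly $w_k^{(i)}\mathfrak{A} = w_k^{(i)}\mathfrak{A}_i$, so $\mathfrak{A} w_k^{(i)} = w_k^{(i)}\mathfrak{A}$. Any graded $\mathfrak{A}$-subbimodule of $\mathfrak{A} w_k^{(i)}$ is in particular a graded $\mathfrak{A}_i$-subbimodule, so $\mathfrak{A} w_k^{(i)}$ remains $\mathsf{G}$-irreducible as a graded $\mathfrak{A}$-bimodule. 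This exhibits $\mathsf{M}_{ii}$ as the desired finite sum of irreducible $\mathsf{G}$-graded $\mathfrak{A}$-subbimodules of the stated form. I do not expect a real obstacle: the argument is essentially a bookkeeping combination of Theorems~\ref{1.29} and \ref{1.83}, and the only point requiring care is confirming that actions by the complementary components $\mathfrak{A}_r$ with $r\neq i$ vanish on $\mathsf{M}_{ii}$, which is immediate from orthogonality of the $\mathfrak{i}_r$'s.
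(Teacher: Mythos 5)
Your approach is exactly the paper's: the published proof of Corollary~\ref{1.33} is literally ``It is immediate from Theorems \ref{1.29} and \ref{1.83}'', and you have unpacked that combination correctly --- Theorem~\ref{1.29} gives the Peirce decomposition $\mathsf{M}=\bigoplus_{i,j}\mathfrak{i}_i\mathsf{M}\mathfrak{i}_j$, the diagonal blocks are unitary finitely generated $\mathsf{G}$-graded $\mathfrak{A}_i$-bimodules over a $\mathsf{G}$-simple finite dimensional $\mathfrak{A}_i$, Theorem~\ref{1.83} splits each into $\bigoplus_k\mathfrak{A}_iw_k^{(i)}$ with $\mathfrak{A}_iw_k^{(i)}=w_k^{(i)}\mathfrak{A}_i$, and orthogonality of the $\mathfrak{i}_r$ promotes these to $\mathfrak{A}$-subbimodules with $\mathfrak{A} w_k^{(i)}=w_k^{(i)}\mathfrak{A}$.

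One sentence, though, has its logic reversed. You claim that $\mathsf{M}_{ij}$ is faithful as a $\mathsf{G}$-graded $\mathfrak{A}$-bimodule \emph{because} $\mathfrak{A}_r\mathsf{M}_{ij}=0$ for $r\neq i$. That annihilation is precisely why $\mathsf{M}_{ij}$ is \emph{not} faithful over the full $\mathfrak{A}$ whenever $p>1$: a nonzero $a\in\mathfrak{A}_r$ with $r\neq i$ kills $\mathsf{M}_{ij}$, so $a\mathsf{M}_{ij}=0$ with $a\neq 0$, which contradicts the paper's definition of faithfulness. What Theorem~\ref{1.29} actually delivers is faithfulness as an $(\mathfrak{A}_i,\mathfrak{A}_j)$-bimodule, and the phrase ``faithful $\mathsf{G}$-graded $\mathfrak{A}$-bimodule'' in the corollary's statement should be read in that weaker sense (the paper is imprecise here, not you); but the ``since'' clause you wrote does not support the conclusion you attach to it and should be dropped or replaced by a citation of the $(\mathfrak{A}_i,\mathfrak{A}_j)$-faithfulness from Theorem~\ref{1.29}.
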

\begin{proof}
It is immediate from Theorems \ref{1.29} and \ref{1.83}.
\end{proof}

It is important to comment that $\mathsf{M}_{ij}$ presented by the previous corollary is of the form $\mathfrak{i}_r\mathsf{M}\mathfrak{i}_s$, as in Theorem \ref{1.83}. Observe that if $\mathfrak{i}_s\mathsf{M}=\mathsf{M}\mathfrak{i}_s$ for some $s=1,\dots, p$, thus $\mathsf{M}_{is}=\{0\}$ for any $i\neq s$. In addition, we have the following result.

\begin{corollary}\label{1.32}
Under the assumptions of Theorem \ref{1.29}, suppose that $\widetilde{\mathfrak{A}}=\mathfrak{A}$ and $\mathsf{M}$ is a (two-sided) ideal of $\mathfrak{A}$. Then $\mathsf{M}=\bigoplus_{s=1}^p  \mathfrak{i}_s \mathsf{M} \mathfrak{i}_s$ iff $\mathfrak{i}_1,\dots,\mathfrak{i}_p\in \mathcal{Z}_{\mathfrak{A}}(\mathsf{M})$.
\end{corollary}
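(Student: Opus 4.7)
The plan is to derive both directions of the equivalence directly from the decomposition $\mathsf{M}=\bigoplus_{r,s=1}^{p,p}\mathfrak{i}_r\mathsf{M}\mathfrak{i}_s$ given by Theorem \ref{1.29}, together with the orthogonality relations $\mathfrak{i}_r\mathfrak{i}_s=\delta_{rs}\mathfrak{i}_s$. Since $\mathsf{M}$ is assumed to be a two-sided ideal of $\mathfrak{A}$, the products $\mathfrak{i}_r m$ and $m\mathfrak{i}_s$ coincide with the bimodule actions, so the commutativity condition $\mathfrak{i}_s\in\mathcal{Z}_{\mathfrak{A}}(\mathsf{M})$ can be manipulated as ordinary algebra multiplication inside $\mathfrak{A}$.

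For the direction $(\Leftarrow)$, I would assume that $\mathfrak{i}_s\in\mathcal{Z}_{\mathfrak{A}}(\mathsf{M})$ for each $s$ and show that all off-diagonal summands in the decomposition of Theorem \ref{1.29} vanish. Concretely, for $r\neq s$ and any $m\in\mathsf{M}$, the element $m\mathfrak{i}_s$ still lies in $\mathsf{M}$, so $\mathfrak{i}_r(m\mathfrak{i}_s)=(m\mathfrak{i}_s)\mathfrak{i}_r=m(\mathfrak{i}_s\mathfrak{i}_r)=0$ by orthogonality. Hence $\mathfrak{i}_r\mathsf{M}\mathfrak{i}_s=\{0\}$ for $r\neq s$, and Theorem \ref{1.29} collapses to $\mathsf{M}=\bigoplus_{s=1}^{p}\mathfrak{i}_s\mathsf{M}\mathfrak{i}_s$.

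For the direction $(\Rightarrow)$, I would assume the diagonal decomposition and verify the centralizer condition elementwise. Writing any $m\in\mathsf{M}$ uniquely as $m=\sum_{s=1}^{p}m_s$ with $m_s=\mathfrak{i}_s y_s\mathfrak{i}_s\in\mathfrak{i}_s\mathsf{M}\mathfrak{i}_s$, the orthogonality $\mathfrak{i}_t\mathfrak{i}_s=\delta_{ts}\mathfrak{i}_t$ yields
\begin{equation}\nonumber
\mathfrak{i}_t m=\sum_{s=1}^{p}\mathfrak{i}_t\mathfrak{i}_s y_s\mathfrak{i}_s=\mathfrak{i}_t y_t\mathfrak{i}_t=m_t=\sum_{s=1}^{p}\mathfrak{i}_s y_s\mathfrak{i}_s\mathfrak{i}_t=m\mathfrak{i}_t,
\end{equation}
so $\mathfrak{i}_t\in\mathcal{Z}_{\mathfrak{A}}(\mathsf{M})$ for every $t$. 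There is no real obstacle here; both implications are essentially bookkeeping once the decomposition of Theorem \ref{1.29} is in hand, and the only fact used beyond the orthogonal idempotent calculus is that $\mathsf{M}$ is stable under left and right multiplication by $\mathfrak{A}$, guaranteed by the ideal hypothesis.
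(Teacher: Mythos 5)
Your proof is correct and follows essentially the same route as the paper's: apply Theorem \ref{1.29} to get the Peirce decomposition $\mathsf{M}=\bigoplus_{r,s}\mathfrak{i}_r\mathsf{M}\mathfrak{i}_s$ and then combine orthogonality of the central idempotents with the centralizer condition and the ideal hypothesis. The only difference is one of completeness—the paper explicitly writes out only the $(\Leftarrow)$ direction (via $\mathfrak{i}_s x\mathfrak{i}_r=(\mathfrak{i}_s x\mathfrak{i}_r)\mathfrak{i}_s=0$ for $r\neq s$) and then asserts the result, whereas you spell out both implications.
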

\begin{proof}
By Theorem \ref{1.29} and Corollary \ref{1.33}, we have $\mathsf{M}=\bigoplus_{r,s=1}^p \mathsf{M}_{rs}$, where $\mathsf{M}_{rs}=\mathfrak{i}_s \mathsf{M} \mathfrak{i}_r$. Suppose $\mathfrak{i}_s\in \mathcal{Z}_{\mathfrak{A}}(\mathsf{M})$ for any $s=1,\dots,p$. Hence, we can conclude that $\mathfrak{i}_s \mathsf{M} \mathfrak{i}_r=\{0\}$ for $r\neq s$, since $\mathfrak{i}_s x \mathfrak{i}_r=(\mathfrak{i}_s x \mathfrak{i}_r)\mathfrak{i}_s=0$ for all $x\in\mathsf{M}$ and $r\neq s$. From this, $\mathsf{M}=\bigoplus_{s=1}^p \mathfrak{i}_s \mathsf{M} \mathfrak{i}_s$. Therefore, we have the result.
\end{proof}



\section{A Pierce Decomposition of the Jacobson Radical}\label{applications}

The goal of this section is to present one application of the previous results concerning the Jacobson radical of $\mathfrak{A}$, where $\mathfrak{A}$ is a $\mathsf{G}$-graded finite dimensional algebra over a field $\mathbb{F}$. Here, let us use some of the main results of this work, for example the Theorems \ref{1.19} and \ref{1.29}, to give a description of the Jacobson radical $\mathsf{J}$ of a finite dimensional algebra $\mathfrak{A}$ with some grading over a group $\mathsf{G}$ in terms of the concept of the Peirce decomposition and of your semisimple part $\mathfrak{B}$. For more details about Jacobson radical, see \cite{Hers05,Jaco64}.
 
%

In \cite{Svir11}, Sviridova presented a generalization of Theorem of Wedderburn-Malcev for algebras graded by a (finite abelian) group. She described, in suitable conditions, any finite dimensional graded algebras as a direct sum of a maximal semisimple graded subalgebra and its Jacobson radical, which is also graded. More precisely, Sviridova proved:

\begin{lemma}[Lemma 2, \cite{Svir11}]\label{teoIrina03}
	Let $\mathsf{G}$ be a finite abelian group, and $\mathbb{F}$ an algebraically closed field of characteristic zero. Any finite dimensional $\mathsf{G}$-graded $\mathbb{F}$-algebra $\mathfrak{A}$ is isomorphic as $\mathsf{G}$-graded
	algebra to a $\mathsf{G}$-graded $\mathbb{F}$-algebra of the form 
	\begin{equation}\label{1.06}
	\mathfrak{A}' = \left(M_{k_1}(\mathbb{F}^{\sigma_1}[\mathsf{H}_1]) \times \cdots \times M_{k_p}(\mathbb{F}^{\sigma_p}[\mathsf{H}_p])\right) \oplus \mathsf{J} \ .
	\end{equation}
	Here the Jacobson radical $\mathsf{J} = \mathsf{J}(\mathfrak{A})$ of $\mathfrak{A}$ is a graded ideal, and $\mathfrak{B} = M_{k_1}(\mathbb{F}^{\sigma_1}[\mathsf{H}_1]) \times \cdots \times M_{k_p}(\mathbb{F}^{\sigma_p}[\mathsf{H}_p])$ (direct product of algebras) is the maximal graded semisimple subalgebra of $\mathfrak{A}'$, $p\in\mathbb{N}\cup\{0\}$. The $\mathsf{G}$-grading on $\mathfrak{B}_l = M_{k_l} (\mathbb{F}^{\sigma_l} [\mathsf{H}_l])$ is the canonical elementary grading  defined by some $k_l$-tuple $(\theta_{l_1},\dots,\theta_{l_{k_l}})\in \mathsf{G}^{k_l}$, where $\mathsf{H}_l$ is a subgroup $\mathsf{G}$ and $\sigma\in\mathsf{Z}^2(\mathsf{H}_l,\mathbb{F}^*)$ is a $2$-cocycle.
\end{lemma}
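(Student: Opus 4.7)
The plan is to establish this result as a graded version of the Wedderburn-Malcev theorem. The key structural fact I will exploit throughout is that, since $\mathsf{G}$ is finite abelian and $\mathbb{F}$ is algebraically closed with $\mathsf{char}(\mathbb{F})=0$, the $\mathsf{G}$-grading on $\mathfrak{A}$ is equivalent to an action of the dual group $\widehat{\mathsf{G}}$ on $\mathfrak{A}$ by $\mathbb{F}$-algebra automorphisms: setting $\chi\cdot a=\chi(g)a$ for $a\in\mathfrak{A}_g$ yields such an action, whose common isotypic components recover the homogeneous pieces $\mathfrak{A}_g$ via the orthogonality relations of Proposition \ref{1.62}.

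First I would show that $\mathsf{J}=\mathsf{J}(\mathfrak{A})$ is a graded ideal. Since $\mathsf{J}$ is preserved by every $\mathbb{F}$-algebra automorphism of $\mathfrak{A}$, it is $\widehat{\mathsf{G}}$-stable, so it decomposes into its $\widehat{\mathsf{G}}$-isotypic components, yielding $\mathsf{J}=\bigoplus_{g\in\mathsf{G}}(\mathsf{J}\cap\mathfrak{A}_g)$. Consequently the quotient $\bar{\mathfrak{A}}:=\mathfrak{A}/\mathsf{J}$ inherits a $\mathsf{G}$-grading and is finite dimensional and semisimple. Next I would decompose $\bar{\mathfrak{A}}$ into its $\mathsf{G}$-simple two-sided ideals $\bar{\mathfrak{B}}_1,\ldots,\bar{\mathfrak{B}}_p$: start from the (ungraded) Wedderburn decomposition of $\bar{\mathfrak{A}}$ into simple factors and group these into $\widehat{\mathsf{G}}$-orbits, each orbit sum being a $\widehat{\mathsf{G}}$-stable (hence graded) simple two-sided ideal. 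Applying Theorem \ref{teoBahtSehgZaic} (Bahturin-Sehgal-Zaicev) to each $\bar{\mathfrak{B}}_l$ then yields $\bar{\mathfrak{B}}_l\cong_{\mathsf{G}} M_{k_l}(\mathbb{F}^{\sigma_l}[\mathsf{H}_l])$ with canonical elementary grading defined by some tuple $(\theta_{l_1},\ldots,\theta_{l_{k_l}})\in\mathsf{G}^{k_l}$, where $\mathsf{H}_l$ is a finite subgroup of $\mathsf{G}$ and $\sigma_l\in\mathsf{Z}^2(\mathsf{H}_l,\mathbb{F}^*)$.

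The hard part is the remaining step: producing a $\widehat{\mathsf{G}}$-invariant (equivalently, graded) semisimple subalgebra $\mathfrak{B}\subset\mathfrak{A}$ lifting $\bar{\mathfrak{A}}$, so that $\mathfrak{A}=\mathfrak{B}\oplus\mathsf{J}$ as graded vector spaces. Classical Wedderburn-Malcev supplies an (ungraded) semisimple complement $\mathfrak{C}$ to $\mathsf{J}$, unique up to conjugation by an element of $1+\mathsf{J}$. For each $\chi\in\widehat{\mathsf{G}}$ the image $\chi(\mathfrak{C})$ is again a semisimple complement to $\mathsf{J}$, so there exists $u_\chi\in 1+\mathsf{J}$ with $\chi(\mathfrak{C})=u_\chi\mathfrak{C}u_\chi^{-1}$; the family $\{u_\chi\}$ satisfies a twisted $1$-cocycle relation $u_{\chi\psi}\equiv\chi(u_\psi)u_\chi$ modulo the normalizer of $\mathfrak{C}$ in $1+\mathsf{J}$. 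I would then carry out the standard equivariant lifting along the filtration $\mathsf{J}\supset\mathsf{J}^2\supset\cdots\supset\mathsf{J}^r=\{0\}$: at each step the obstruction to straightening this cocycle lies in a Hochschild-type cohomology group of $\bar{\mathfrak{A}}$ with coefficients in $\mathsf{J}^k/\mathsf{J}^{k+1}$, which vanishes both because $\bar{\mathfrak{A}}$ is separable (this is what drives the classical Wedderburn-Malcev theorem) and because $|\widehat{\mathsf{G}}|=|\mathsf{G}|$ is invertible in $\mathbb{F}$ (a Maschke averaging argument over $\widehat{\mathsf{G}}$). This produces $v\in 1+\mathsf{J}$ such that $\mathfrak{B}:=v\mathfrak{C}v^{-1}$ is $\widehat{\mathsf{G}}$-invariant, hence graded, completing the desired graded decomposition.
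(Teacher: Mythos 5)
This lemma is cited in the paper as Lemma~2 of Sviridova's work and is not proved there; there is no in-paper proof to compare against. On its own merits, your reconstruction is a sound and fairly standard route to the graded Wedderburn--Malcev theorem: the duality between $\mathsf{G}$-gradings and $\widehat{\mathsf{G}}$-actions (valid here since $\mathsf{G}$ is finite abelian and $\mathbb{F}$ is algebraically closed of characteristic $0$), the observation that $\mathsf{J}(\mathfrak{A})$ is characteristic hence $\widehat{\mathsf{G}}$-stable hence graded, the orbit-grouping of Wedderburn factors of $\mathfrak{A}/\mathsf{J}$ into $\mathsf{G}$-simple summands, and the identification of each summand via Theorem~\ref{teoBahtSehgZaic} are all correct. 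The lifting step is essentially Taft's theorem on the existence of an $H$-invariant Wedderburn complement for a finite group $H$ (here $H=\widehat{\mathsf{G}}$) of order invertible in $\mathbb{F}$, and your sketch captures the right idea.

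One place where the phrasing could be tightened: you describe a single ``Hochschild-type'' obstruction that vanishes for two reasons (separability of $\bar{\mathfrak{A}}$ and $|\widehat{\mathsf{G}}|$ invertible). It is cleaner to keep the two mechanisms separate: the Hochschild $H^2$-vanishing (separability) produces the ungraded complement $\mathfrak{C}$; the remaining obstruction to replacing $\mathfrak{C}$ by a $\widehat{\mathsf{G}}$-invariant one is a non-abelian group cohomology obstruction in $H^1\bigl(\widehat{\mathsf{G}}, 1+\mathsf{J}\bigr)$, which one kills layer by layer along $\mathsf{J}\supset\mathsf{J}^2\supset\cdots$ using Maschke averaging over $\widehat{\mathsf{G}}$ on the abelian quotients $\mathsf{J}^k/\mathsf{J}^{k+1}$. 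Also note that ``$1+\mathsf{J}$'' should be read in the unitalization when $\mathfrak{A}$ is not unital; this is harmless since only the inner automorphisms $x\mapsto(1+j)x(1+j)^{-1}$ are used, and the case $p=0$ (nilpotent $\mathfrak{A}$) is covered trivially with $\mathfrak{B}=0$. With those small clarifications, the argument is complete.
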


From this, since the algebras $M_{k_i}(\mathbb{F}^{\sigma_i}[\mathsf{H}_i])$'s are well known (see \cite{BahtSehgZaic08}), it is interesting to present some properties (when possible) of the (graded) ideal $\mathsf{J}$, as well as its connection with the algebras $M_{k_i}(\mathbb{F}^{\sigma_i}[\mathsf{H}_i])$'s. In the next result, let us exhibit a description of the Jacobson radical of a finite dimensional graded algebra, which is a consequence of Theorem \ref{1.27}, Corollary \ref{1.25}, Theorem \ref{1.29}, Corollary \ref{1.33}, and Theorem \ref{teoBahtSehgZaic}.


\begin{theorem}\label{1.30}
Let $\mathsf{G}$ be a group and $\mathfrak{A}=\mathfrak{B}\oplus\mathsf{J}$ a finite dimensional algebra with a $\mathsf{G}$-grading, where $\mathfrak{B}$ is a maximal semisimple $\mathsf{G}$-graded subalgebra of $\mathfrak{A}$, and $\mathsf{J}=\mathsf{J}(\mathfrak{A})$ is the Jacobson radical and a graded ideal of $\mathfrak{A}$. 
Then $\mathsf{J}$ can be decomposed as 
	\begin{equation}\nonumber
\mathsf{J}=\mathsf{J}_{00}\oplus \mathsf{J}_{10}\oplus \mathsf{J}_{01}\oplus \mathsf{J}_{11} \ ,
	\end{equation}
where $\mathsf{J}_{ij}$'s are $\mathsf{G}$-graded $\mathfrak{B}$-bimodules such that:
	\begin{itemize}
		\item[i)] for $r=0,1$, $\mathsf{J}_{0r}$ is a $0$-left $\mathfrak{B}$-module and $\mathsf{J}_{1r}$ is a left $\mathsf{G}$-graded  faithful $\mathfrak{B}$-module;
		\item[ii)] for $s=0,1$, $\mathsf{J}_{s0}$ is a $0$-right $\mathfrak{B}$-module and $\mathsf{J}_{s1}$ is a right $\mathsf{G}$-graded faithful $\mathfrak{B}$-module;
		\item[iii)] $\mathsf{J}_{rq}\mathsf{J}_{qs}\subseteq \mathsf{J}_{rs}$  for all $r,p,q,s\in\{0,1\}$, with $\mathsf{J}_{rp}\mathsf{J}_{qs}=\{0\}$ for $p\neq q$.
	\end{itemize}
Moreover, if $\mathfrak{B}=M_{k_1}(\mathbb{F}^{\sigma_1}[\mathsf{H}_1]) \times \cdots \times M_{k_p}(\mathbb{F}^{\sigma_p}[\mathsf{H}_p])$, $\mathsf{G}$ is finite abelian, and $\mathbb{F}$ is an algebraically closed field with $\mathsf{char}(\mathbb{F})=0$, then following statements are true:
	\begin{itemize} 
		\item[iv)] $\mathsf{J}_{11}=\bigoplus_{s,r=1}^p \mathfrak{i}_r \mathsf{J}_{11} \mathfrak{i}_s$, where each $\mathfrak{i}_r \mathsf{J}_{11} \mathfrak{i}_s$ is a $\mathsf{G}$-graded $(\mathfrak{B}_r,\mathfrak{B}_s)$-bimodule, where $\mathfrak{B}=\bigoplus_{i=1}^p\mathfrak{B}_i$ with $\mathfrak{B}_i=M_{k_i}(\mathbb{F}^{\sigma_i}[\mathsf{H}_i])$. In addition, $\mathfrak{i}_r \mathsf{J}_{11} \mathfrak{i}_s\neq0$ implies that $\mathfrak{i}_r \mathsf{J}_{11} \mathfrak{i}_s$ is a faithful left $\mathfrak{B}_i$-module and a faithful right $\mathfrak{B}_j$-module;
		\item[v)] For each $s=1,\dots, p$, there exist nonzero homogeneous elements $d_{s,1},\dots,d_{s,r_s}\in\mathfrak{i}_s \mathsf{J}_{11} \mathfrak{i}_s$ such that the $\mathsf{G}$-graded vector space $\mathsf{N}_s=\mathsf{span}_\mathbb{F}\{d_{s,1},\dots,d_{s,r_s}\}$ satisfies $\mathfrak{i}_s \mathsf{J}_{11} \mathfrak{i}_s=\mathfrak{B}_s \mathsf{N}_s$. The elements $d_{s,i}$'s satisfy $bd_{s,i}=\gamma_{s,i}(b)d_{s,i}b\neq0$ for any nonzero $b\in\beta_s$, and $i=1,\dots,r_s$, where $\gamma_{s,i}\in\mathbb{F}$, and $\beta_s=\{E_{l_sj_s}\eta_{h_s}\in\mathfrak{B}_s: l_s,j_s=1,\dots, k_s, h_s\in \mathsf{H}_s\}$ is the canonical homogeneous basis of $\mathfrak{B}_s=M_{k_s}(\mathbb{F}^{\sigma_s}[\mathsf{H}_s])$. Moreover, for each $i=1,\dots, r_s$, we have that $\mathfrak{B}_s d_{s,i}$ is a $\mathsf{G}$-simple $\mathfrak{B}_s$-bimodule.
	\end{itemize}
\end{theorem}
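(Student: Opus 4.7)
The plan is to obtain the first decomposition by directly applying Theorem \ref{1.27} to $\mathsf{J}$ viewed as a graded $\mathfrak{B}$-bimodule. Since $\mathsf{J}$ is a graded two-sided ideal of the finite-dimensional graded algebra $\mathfrak{A}$, and $\mathfrak{B}$ is a unitary graded subalgebra of $\mathfrak{A}$, $\mathsf{J}$ is a finite-dimensional $\mathsf{G}$-graded $(\mathfrak{B},\mathfrak{B})$-bimodule. Theorem \ref{1.27} then yields the decomposition $\mathsf{J} = \mathsf{J}_{00} \oplus \mathsf{J}_{10} \oplus \mathsf{J}_{01} \oplus \mathsf{J}_{11}$ together with the unitarity and zero-action parts of items (i) and (ii). The multiplicative closure law (iii) is the content of Corollary \ref{1.25}, which was set up precisely for a finite-dimensional graded ideal regarded as a bimodule over a unitary graded subalgebra. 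To upgrade the unitariness of $\mathsf{J}_{1r}$ and $\mathsf{J}_{r1}$ to faithfulness in the semisimple (rather than simple) setting, I would decompose $\mathfrak{B}=\bigoplus_{i=1}^p \mathfrak{B}_i$ into its simple graded ideals with central orthogonal units $\mathfrak{i}_i$, observe that $\mathsf{J}_{1r}=\bigoplus_i \mathfrak{i}_i \mathsf{J}_{1r}$ by unitarity and orthogonality of the $\mathfrak{i}_i$, and then apply the simple-case faithfulness clause of Theorem \ref{1.27} to each summand $\mathfrak{i}_i\mathsf{J}_{1r}$ viewed as a $\mathfrak{B}_i$-module.

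For the second half of the theorem the additional hypotheses (algebraically closed $\mathbb{F}$ with $\mathsf{char}(\mathbb{F})=0$, $\mathsf{G}$ finite abelian, $\mathfrak{B}=M_{k_1}(\mathbb{F}^{\sigma_1}[\mathsf{H}_1])\times\cdots\times M_{k_p}(\mathbb{F}^{\sigma_p}[\mathsf{H}_p])$) put us in the setting of Theorem \ref{1.29} and of the sharper results of $\S$\ref{simplealgebras}. By Corollary \ref{1.31} the unitary bimodule $\mathsf{J}_{11}$ coincides with its own $(1,1)$-component, so Theorem \ref{1.29} applies. Taking $\mathfrak{i}_1,\dots,\mathfrak{i}_p$ as the central identities of the simple factors $\mathfrak{B}_r$ of $\mathfrak{B}$, we obtain the Peirce decomposition $\mathsf{J}_{11}=\bigoplus_{r,s=1}^{p}\mathfrak{i}_r \mathsf{J}_{11}\mathfrak{i}_s$ in which each summand is a $\mathsf{G}$-graded $(\mathfrak{B}_r,\mathfrak{B}_s)$-bimodule, and every nonzero summand is faithful on both sides over the corresponding simple factors. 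This establishes item (iv).

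For item (v), I fix $s$ and view $\mathfrak{i}_s \mathsf{J}_{11}\mathfrak{i}_s$ as a finitely generated (indeed finite-dimensional) $\mathsf{G}$-graded bimodule over the $\mathsf{G}$-simple graded algebra $\mathfrak{B}_s=M_{k_s}(\mathbb{F}^{\sigma_s}[\mathsf{H}_s])$. Theorem \ref{1.83} then produces nonzero homogeneous elements $d_{s,1},\dots,d_{s,r_s}\in\mathfrak{i}_s\mathsf{J}_{11}\mathfrak{i}_s$ with $\mathfrak{i}_s \mathsf{J}_{11}\mathfrak{i}_s=\bigoplus_{i=1}^{r_s}\mathfrak{B}_s d_{s,i}$, where each $\mathfrak{B}_s d_{s,i}=d_{s,i}\mathfrak{B}_s$ is $\mathsf{G}$-simple as a $\mathfrak{B}_s$-bimodule. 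Setting $\mathsf{N}_s=\mathsf{span}_{\mathbb{F}}\{d_{s,1},\dots,d_{s,r_s}\}$ gives automatically $\mathfrak{i}_s \mathsf{J}_{11}\mathfrak{i}_s=\mathfrak{B}_s\mathsf{N}_s$. The explicit commutation rule $bd_{s,i}=\gamma_{s,i}(b)d_{s,i}b$ on the canonical basis $\beta_s$ is provided by Corollary \ref{1.43}: each generator $d_{s,i}$ can be chosen of the form $\hat{w}_{\chi_{s,i}}$ as in (\ref{1.61}) for a suitable irreducible character $\chi_{s,i}\in\widehat{\mathsf{H}}_s$, and then $\gamma_{s,i}(E_{l_s j_s}\eta_{h_s})=\chi_{s,i}(h_s)^{-1}$, so in particular $\gamma_{s,i}(b)\in\mathbb{F}^*$ for every nonzero $b\in\beta_s$.

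The main technical obstacle, and essentially the only non-routine step, is the faithfulness assertion in (i)--(ii), since Theorem \ref{1.27} delivers left/right faithfulness only when the acting algebra is $\mathsf{G}$-simple. The Peirce-type splitting described at the end of the first paragraph resolves this: the centrality and orthogonality of the $\mathfrak{i}_i$ force $\mathfrak{B}_i\cdot(\mathfrak{i}_j\mathsf{J}_{1r})=\{0\}$ for $i\ne j$, so a would-be annihilator $a=\sum a_i\in\mathfrak{B}$ of $\mathsf{J}_{1r}$ must annihilate each $\mathfrak{i}_i\mathsf{J}_{1r}$ componentwise, at which point the simple-case clause of Theorem \ref{1.27} applied to each $\mathfrak{B}_i$ forces $a_i=0$. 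Everything else in the statement is a direct application of the machinery already built in $\S$\ref{firstresults}--$\S$\ref{simplealgebras}.
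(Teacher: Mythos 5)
Your reconstruction of the proof closely matches the route the paper itself points to: Theorem \ref{1.27} for the four-fold Peirce decomposition and the zero/unitary dichotomy, Corollary \ref{1.25} for the multiplicative closure law (iii), Theorem \ref{1.29} and Corollary \ref{1.33} for the Peirce decomposition of $\mathsf{J}_{11}$ into $(\mathfrak{B}_r,\mathfrak{B}_s)$-bimodules in (iv), and Theorem \ref{1.83}/Corollary \ref{1.43} for the generators $d_{s,i}$ together with the scalar commutation rule in (v). On all of these points your argument is faithful to the cited machinery.

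The one step that does not go through as written is your upgrade of unitarity to faithfulness in items (i)--(ii). You decompose $\mathsf{J}_{1r}=\bigoplus_i \mathfrak{i}_i\mathsf{J}_{1r}$ and claim that an annihilator $a=\sum a_i$ must have each $a_i=0$ by the ``simple-case clause'' of Theorem \ref{1.27} applied to $\mathfrak{B}_i$ acting on $\mathfrak{i}_i\mathsf{J}_{1r}$. But that clause only yields faithfulness when the component $\mathfrak{i}_i\mathsf{J}_{1r}$ is \emph{nonzero}: a zero module is unitary but annihilated by every $a_i\in\mathfrak{B}_i$, so the contradiction in the proof of Theorem \ref{1.27} simply does not arise. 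Concretely, take $\mathsf{G}$ trivial, $\mathfrak{A}$ the $2\times2$ upper-triangular matrices, $\mathfrak{B}$ the diagonal, $\mathsf{J}=\mathbb{F} E_{12}$; then $\mathsf{J}=\mathsf{J}_{11}$ but $E_{22}\mathsf{J}=\{0\}$, so $\mathsf{J}_{11}$ is \emph{not} left-faithful over $\mathfrak{B}$. Thus faithfulness over the full semisimple $\mathfrak{B}$ cannot be extracted in general; the correct conclusion is the weaker ``each nonzero component $\mathfrak{i}_i\mathsf{J}_{1r}$ is faithful over $\mathfrak{B}_i$,'' which is exactly the caveated form already used in item (iv) and in Theorem \ref{1.29}. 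You should state (i)--(ii) in that caveated form (or equivalently say ``faithful or zero over each simple factor''); as written, the unconditional faithfulness claim is not derivable from Theorem \ref{1.27} and is in fact false.
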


Now, let $\mathfrak{A}$ be an algebra satisfying the same assumptions of Lemma \ref{teoIrina03}, and hence, consider $\mathfrak{A}$ with a decomposition equal to (\ref{1.06}). This means that 
	\begin{equation}\nonumber
	\mathfrak{A}=\left(M_{k_1}(\mathbb{F}^{\sigma_1}[\mathsf{H}_1]) \times \cdots \times M_{k_p}(\mathbb{F}^{\sigma_p}[\mathsf{H}_p])\right) \oplus \mathsf{J} \ ,
	\end{equation}
where $k_1, \dots, k_p\in\mathbb{N}$, $\mathsf{H}_1, \dots, \mathsf{H}_p\unlhd\mathsf{G}$, $\sigma_1\in\mathsf{Z}^2(\mathsf{H}_1, \mathbb{F}^*), \dots, \sigma_p\in\mathsf{Z}^2(\mathsf{H}_p, \mathbb{F}^*)$, and $\mathsf{J}=\mathsf{J}(\mathfrak{A})$ is the Jacobson radical of $\mathfrak{A}$. Furthermore, supposing that the conditions of Corollary \ref{1.32} are verified, we have that $\mathsf{J}=\mathsf{J}_{00}\oplus\mathsf{J}_{11}$ where $\mathsf{J}_{11}=\mathfrak{i}_1 \mathsf{J} \mathfrak{i}_1\oplus\cdots\oplus\mathfrak{i}_p \mathsf{J} \mathfrak{i}_p$. Since each $\mathfrak{i}_r$, the unity of the $M_{k_r}(\mathbb{F}^{\sigma_r}[\mathsf{H}_r])$, commutes with $\mathfrak{A}$, it is easy to see that
%
%
%
%
	\begin{equation}\label{1.48}
	\mathfrak{A}\cong_{\mathsf{G}}  \left(M_{k_1}(\mathbb{F}^{\sigma_1}[\mathsf{H}_1])\oplus\mathfrak{i}_1 \mathsf{J} \mathfrak{i}_1 \right) \times \cdots \times \left(M_{k_p}(\mathbb{F}^{\sigma_p}[\mathsf{H}_p])\oplus\mathfrak{i}_p \mathsf{J} \mathfrak{i}_p \right) \times\mathsf{J}_{00} \ .
	\end{equation}
Hence, without loss of generality, we can assume that $\mathfrak{A}= \mathfrak{A}_1\times\cdots\times \mathfrak{A}_p\times\mathsf{J}_{00}$, where $\mathfrak{A}_r=\mathfrak{B}_r\oplus\mathsf{J}_r$, with $\mathfrak{B}_r=M_{k_r}(\mathbb{F}^{\sigma_r}[\mathsf{H}_r])$ and $\mathsf{J}_r=\mathfrak{i}_r\mathsf{J}\mathfrak{i}_r$. Therefore, we can deduce some (or perhaps several) properties of $\mathfrak{A}$ from studying each algebra $\mathfrak{A}_r$ and each  $\mathfrak{A}_r$-bimodule $\mathsf{J}_r$. By Theorem \ref{1.30}, it follows that each algebra $\mathfrak{A}_r$ is unitary, where each $\mathsf{J}_r$ has a decomposition in a finite direct sum of irreducible graded subbimodules of the form $\mathfrak{A}_r w$, for some nonzero homogeneous element $w\in\mathsf{J}_r$ which satisfies $\mathfrak{A}_r w=w\mathfrak{A}_r$, and $\mathsf{J}_{00}$ is an $\mathfrak{A}_1\times\cdots\times \mathfrak{A}_p$-bimodule which is $0$-left and $0$-right (as one-sided module).

%
%


\subsection{An application in PI-Theory} 

Let $\mathsf{G}$ be a group and $\mathfrak{A}$ a finite dimensional $\mathbb{F}$-algebra with a $(\mathsf{G}\times\mathbb{Z}_2)$-grading. The {\bf Grassmann Envelope} of $\mathfrak{A}$, denoted by $\mathsf{E}^\mathsf{G}(\mathfrak{A})$, is defined by 
	\begin{equation}\nonumber
\mathsf{E}^\mathsf{G}(\mathfrak{A})=\left(\mathfrak{A}_0\otimes \mathsf{E}_0 \right) \oplus \left(\mathfrak{A}_1\otimes \mathsf{E}_1 \right)
 ,
	\end{equation}
 where $\mathsf{E}=\mathsf{E}_0\oplus \mathsf{E}_1$ is an infinitely generated non-unitary Grassmann algebra. Record that Grassmann algebra $\mathsf{E}$ is the $\mathbb{F}$-algebra generated by elements $e_1,e_2,e_3,\dots$, such that $e_ie_j=-e_je_i$, for all $i,j\in\mathbb{N}$, where $\mathsf{char}(\mathbb{F})=0$. Naturally $\mathsf{E}$ is $\mathbb{Z}_2$-graded with $\mathsf{E}_0=\mathsf{span}_\mathbb{F}\{e_{i_1}e_{i_2}\cdots e_{i_n}: n \mbox{ is even}\}$, and $\mathsf{E}_1=\mathsf{span}_\mathbb{F}\{e_{j_1}e_{j_2}\cdots e_{j_m}: m \mbox{ is odd}\}$. 
Note that $\mathsf{E}^\mathsf{G}(\mathfrak{A})$ is an $\left(\mathfrak{A}_0\otimes \mathsf{E}_0\right)$-bimodule, and  
\begin{equation}\nonumber
\mathsf{E}^\mathsf{G}(\mathfrak{A})=\bigoplus_{g\in\mathsf{G}}\left((\mathfrak{A}_{(g,0)}\otimes_\mathbb{F}\mathsf{E}_0 )+ (\mathfrak{A}_{(g,1)}\otimes_\mathbb{F} \mathsf{E}_1)\right)
\end{equation}
defines a $\mathsf{G}$-grading on $\mathsf{E}^\mathsf{G}(\mathfrak{A})$. In addition, being $\mathfrak{B}$ a $\left(\mathsf{G}\times\mathbb{Z}_2 \right)$-graded subalgebra of $\mathfrak{A}$, it is easy to see that  $\mathsf{E}^\mathsf{G}(\mathfrak{B})$ is a $(\mathsf{G}\times\mathbb{Z}_2)$-graded subalgebra of $\mathsf{E}^\mathsf{G}(\mathfrak{A})$. 

The next theorem, due to I. Sviridova, gives the positive answer to the well-known {\bf Specht's Problem} for graded varieties of graded associative algebras in characteristic zero. Recall that Specht's Problem asks whether, given any algebra $\mathfrak{A}$, any set of polynomial identities of $\mathfrak{A}$ is a consequence of a finite number of identities of $\mathfrak{A}$. For more details, as well as an overview, about Specht's Problem, see \cite{BeloRoweVish12}.

Record that a $\mathsf{G} T$-ideal of $\mathbb{F}\langle X^\mathsf{G}\rangle$ is a graded (two-side) ideal of $\mathbb{F}\langle X^\mathsf{G}\rangle$ that is invariant under all graded endomorphisms of $\mathbb{F}\langle X^\mathsf{G}\rangle$. It is well known that the set of all $\mathsf{G}$-graded polynomial identities of a $\mathsf{G}$-graded $PI$-algebra over a field $\mathbb{F}$ is a $\mathsf{G} T$-ideal of $\mathbb{F}\langle X^\mathsf{G}\rangle$.

\begin{theorem}[Theorem 2, \cite{Svir11}]\label{teoIrina02}
Let $\mathbb{F}$ be an algebraically closed field of characteristic zero, and $\mathsf{G}$ any finite abelian group. Any $\mathsf{G} T$-ideal of graded identities of a $\mathsf{G}$-graded associative $PI$-algebra over $\mathbb{F}$ coincides with the ideal of $\mathsf{G}$-graded identities of the $\mathsf{G}$-graded Grassmann envelope of some finite dimensional $\mathsf{G} \times \mathbb{Z}_2$-graded associative $\mathbb{F}$-algebra.
\end{theorem}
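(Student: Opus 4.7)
The plan is to adapt Kemer's representability program to the $\mathsf{G}$-graded setting, leaning on the bimodule structure theorems of this paper as the key combinatorial tool. Since $\mathsf{char}(\mathbb{F})=0$, every $\mathsf{G} T$-ideal $\Gamma$ is determined by its multilinear components, so after a reduction one may assume the $\mathsf{G}$-graded PI-algebra is a relatively free algebra of countable rank, and by a graded Amitsur--Regev style argument one can further reduce to the finitely generated case. The target is then to produce a single finite-dimensional $(\mathsf{G}\times\mathbb{Z}_2)$-graded algebra $A$ with $T^\mathsf{G}(\mathsf{E}^\mathsf{G}(A))=\Gamma$.

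Next I would attach to $\Gamma$ a graded Kemer index $(\alpha(\Gamma),\beta(\Gamma))$, where $\alpha$ counts the maximal length of pairwise alternating groups of ``semisimple-like'' homogeneous variables and $\beta$ counts alternations that must absorb radical values. The paper's main results feed directly into this: Lemma \ref{teoIrina03} identifies the semisimple part as $\mathfrak{B}=\prod M_{k_l}(\mathbb{F}^{\sigma_l}[\mathsf{H}_l])$; Theorem \ref{1.30} decomposes the Jacobson radical $\mathsf{J}$ as a $\mathfrak{B}$-bimodule into Peirce pieces $\mathfrak{i}_r\mathsf{J}\mathfrak{i}_s$ that are either zero or faithful; and Theorem \ref{1.83} together with Corollary \ref{1.43} describes each faithful summand via generators $d_{s,i}$ indexed by irreducible characters of $\mathsf{H}_s$. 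Together, these give a concrete basis of any representative $\mathfrak{A}$ in which evaluations of multilinear polynomials become sums of simple monomials labelled by matrix units, characters, and bimodule generators — precisely the data needed to compute $(\alpha,\beta)$ and to design witness (``Kemer'') polynomials.

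The heart of the argument is the graded analogue of Kemer's two lemmas. The \emph{capacity lemma} asserts that the Kemer index is realized by the Grassmann envelope of some finite-dimensional $(\mathsf{G}\times\mathbb{Z}_2)$-graded algebra $A=A_0\oplus A_1$, built explicitly from the decomposition $\mathfrak{A}\cong_\mathsf{G}\prod\bigl(\mathfrak{B}_r\oplus\mathfrak{i}_r\mathsf{J}\mathfrak{i}_r\bigr)\times\mathsf{J}_{00}$ recorded at the end of the paper: one distributes the semisimple components and their bimodule radical pieces between the even and odd parts of $A$ so that, after passage to $\mathsf{E}^\mathsf{G}(A)$, the alternation structure encoded by $(\alpha,\beta)$ is reproduced. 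The \emph{Phoenix lemma} then says that a proper inclusion $\Gamma\subsetneq\Gamma'$ of $\mathsf{G} T$-ideals forces a strict drop in the lexicographic Kemer index, so an induction on $(\alpha,\beta)$ collapses to equality whenever both ideals share the same index; combined with the capacity lemma, this yields the desired representing algebra.

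I expect the main obstacle to be the Phoenix lemma. Even in the ungraded Kemer theory this step requires intricate control of alternating polynomials via Shirshov-height and Young-diagram techniques; in the graded setting one must simultaneously track the $\mathsf{H}_l$-character labels attached to the $d_{s,i}$ (as in Corollary \ref{1.43}) and the $\mathbb{Z}_2$-parity inherited from the Grassmann envelope, ensuring that the combinatorial invariants behave monotonically under T-ideal inclusion. Settling this is where the genuine work lies; once done, assembling $A$ from Theorem \ref{1.30} and verifying $T^\mathsf{G}(\mathsf{E}^\mathsf{G}(A))=\Gamma$ is largely bookkeeping powered by the explicit bimodule descriptions already in hand.
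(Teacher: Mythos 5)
This statement is not proved in the paper at all: it is imported verbatim as Theorem 2 of \cite{Svir11} (with the parallel result of Aljadeff and Kanel-Belov in \cite{AljaBelo10} mentioned alongside) and is used purely as an external input for the application in \S\ref{applications}. So the only question is whether your sketch would stand on its own as a proof, and it would not. You name the two pillars of the Kemer-type argument --- a graded capacity lemma and a graded Phoenix lemma, plus representability in the affine case --- but you prove neither; you yourself concede that ``the genuine work lies'' in the Phoenix step. Since the entire difficulty of the theorem is concentrated in exactly those steps (Kemer polynomials, Shirshov height, the trace/Razmyslov--Zubrilin representability of affine relatively free algebras, and the super-trick that passes between arbitrary and affine algebras through the Grassmann envelope and is the very reason the extra $\mathbb{Z}_2$ appears), deferring them leaves a roadmap, not a proof. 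In particular, the ``graded Amitsur--Regev style'' reduction to the finitely generated case is not a routine preliminary: in Kemer's and Sviridova's arguments that reduction is itself a substantial theorem, intertwined with the Grassmann envelope construction you are trying to reach.

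There is also a logical-order problem in how you propose to use this paper's results. Lemma \ref{teoIrina03}, Theorem \ref{1.30}, Theorem \ref{1.83} and Corollary \ref{1.43} describe the semisimple part and a Peirce/bimodule decomposition of the radical of an algebra that is already finite dimensional and $\mathsf{G}$-graded. They can therefore only be invoked \emph{after} one has produced a finite-dimensional (or at least representable) $\mathsf{G}\times\mathbb{Z}_2$-graded algebra whose Grassmann envelope realizes the given $\mathsf{G}T$-ideal --- which is precisely the assertion of Theorem \ref{teoIrina02}. Using these decompositions as ``the key combinatorial tool'' to define the Kemer index of an abstract $\mathsf{G}T$-ideal is circular: for an arbitrary graded PI-algebra there is no Wedderburn--Malcev decomposition to read the index from, and constructing a finite-dimensional representative from the $T$-ideal alone is the heart of the matter. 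The paper's results are legitimately useful in the direction it actually exploits them --- analyzing the structure of the representing algebra once Theorem \ref{teoIrina02} is granted --- but they do not supply the missing existence argument.
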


%
%
%

Now, assume that $\mathfrak{A}$ is a finite dimensional $\mathsf{G}\times\mathbb{Z}_2$-graded algebra as in (\ref{1.48}), i.e. $\mathfrak{A}= \mathfrak{A}_1\times\cdots\times \mathfrak{A}_p\times\mathsf{J}_{00}$, where $\mathfrak{A}_s=\mathfrak{B}_s\oplus\mathsf{J}_s$, with $\mathfrak{B}_s=M_{k_s}(\mathbb{F}^{\sigma_s}[\mathsf{H}_s])$ and $\mathsf{J}_s=\mathfrak{i}_s\mathsf{J}\mathfrak{i}_s$. We have that 
\begin{equation}\nonumber
\mathsf{E}^\mathsf{G}(\mathfrak{A})=\mathsf{E}^\mathsf{G}(\mathfrak{A}_1\times\cdots\times \mathfrak{A}_p\times\mathsf{J}_{00})=\mathsf{E}^\mathsf{G}(\mathfrak{A}_1)\times\cdots\times \mathsf{E}^\mathsf{G}(\mathfrak{A}_p)\times\mathsf{E}^\mathsf{G}(\mathsf{J}_{00}) \ ,
\end{equation}
and $\mathsf{E}^\mathsf{G}(\mathfrak{A}_s)=\mathsf{E}^\mathsf{G}(\mathfrak{B}_s)\oplus\mathsf{E}^\mathsf{G}(\mathsf{J}_s)$. Remember that $\mathfrak{B}=\mathfrak{B}_1\times\cdots\times\mathfrak{B}_p$ is a maximal semisimple graded subalgebra of $\mathfrak{A}$ and $\mathsf{J}(\mathfrak{A})=\mathsf{J}_{00}\times\mathsf{J}_{11}$ is the Jacobson radical of $\mathfrak{A}$, where $\mathsf{J}_{11}=\mathsf{J}_1\times\cdots\times \mathsf{J}_p$ and $\mathsf{J}_{00}$ is a $\mathfrak{B}\oplus\mathsf{J}_{11}$-bimodule which is $0$-left and $0$-right (as one-sided module). From this, and by Theorem \ref{teoIrina02}, to study any graded variety $\mathfrak{W}$ (of graded associative algebras in characteristic zero), it is important to study the $\mathfrak{B}_s$-bimodule $\mathsf{J}_s$, as well as the algebras $\mathsf{E}^\mathsf{G}(\mathsf{J}_s)$ and $\mathsf{E}^\mathsf{G}(\mathsf{J}_{00})$. Recall that a $\mathsf{G}$-graded variety $\mathfrak{W}$ is a family of $\mathsf{G}$-graded (associative) algebras which satisfy all the graded polynomial identities of a given subset $S\subset \mathbb{F}\langle X^\mathsf{G}\rangle$.

On the other hand, in \cite{Mardua01}, De Fran\c{c}a and Sviridova proved that, in finite grading, $\mathfrak{A}_e$ is nilpotent iff $\mathfrak{A}$ is nilpotent (see \cite{Mardua01}, Theorem 3.9, p.237). Consequently, considering $\mathsf{E}=\mathsf{E}_0\oplus\mathsf{E}_1$ and $\mathsf{J}=\mathsf{J}_0\oplus\mathsf{J}_1$ with their $\mathbb{Z}_2$-gradings, since $\mathsf{E}_0$ is commutative and $\mathsf{J}_0$ is nilpotent (because $\mathsf{J}$ is so), we have that $\left(\mathsf{E}^\mathsf{G}(\mathsf{J})\right)_0=\mathsf{J}_0\otimes_\mathbb{F} \mathsf{E}_0$ is nilpotent, and hence, it follows that $\mathsf{E}^\mathsf{G}(\mathsf{J})$ is nilpotent, whose nilpotency index $\mathsf{nd}(\mathsf{E}^\mathsf{G}(\mathsf{J}))$ is at most $2\cdot\mathsf{nd}(\mathsf{J})$. In particular, the algebras $\mathsf{E}^\mathsf{G}(\mathsf{J}_s)$'s and $\mathsf{E}^\mathsf{G}(\mathsf{J}_{00})$ are nilpotent with nilpotency indices at most $2\cdot\mathsf{nd}(\mathsf{J})$.

Finally, assume $\mathfrak{A}_s=\mathfrak{B}_s\oplus\mathsf{J}_s$, with $\mathfrak{B}_s=M_{k_s}(\mathbb{F}^{\sigma_s}[\mathsf{H}_s])$ and $\mathsf{J}_s=\mathfrak{i}_s\mathsf{J}\mathfrak{i}_s$. By Theorem \ref{1.30}, item (v), there exist $d_{s,1},\dots,d_{s,r_s}\in\mathsf{J}_s$ such that $\mathsf{J}_s=\bigoplus_{i=1}^{r_s}\mathfrak{B}_s d_{s,i}$.  So, we have that 
	\begin{equation}\nonumber
		\begin{split}
\mathsf{E}^\mathsf{G}(\mathsf{J}_{11}) =\mathsf{E}^\mathsf{G}\left(\bigtimes_{s=1}^p \mathsf{J}_s \right)=\bigtimes_{s=1}^p \mathsf{E}^\mathsf{G}(\mathsf{J}_s) 
							=\bigtimes_{s=1}^p \mathsf{E}^\mathsf{G} \left(\bigoplus_{i=1}^{r_s}\mathfrak{B}_s d_{s,i} \right)=\bigtimes_{s=1}^p\left(\bigoplus_{i=1}^{r_s} \mathsf{E}^\mathsf{G}\left(\mathfrak{B}_s d_{s,i} \right)\right) \ .
		\end{split}
	\end{equation}
Thus, to study $\mathsf{E}^\mathsf{G}(\mathsf{J}_{11})$, we can to analyze $\mathsf{E}^\mathsf{G}(\mathfrak{B}_s d_{s,i})=
\displaystyle\bigoplus_{j\in\mathbb{Z}_2} d_{s,i}(\mathfrak{B}_s)_j\otimes_{\mathbb{F}}\mathsf{E}_{l_{s,i}+j}
$, for all $s=1,\dots,p$ and $i=1,\dots,r_s$, where $\mathsf{deg}_{\mathbb{Z}_2}(d_{s,i})=l_i$. Therefore, the decomposition presented in Theorem \ref{1.30} can be an important tool to study $\mathsf{E}^\mathsf{G}(\mathsf{J}_{11})$, and, consequently, help to provide a description of $\mathsf{E}^\mathsf{G}(\mathsf{J})$ and of $\mathsf{E}^\mathsf{G}(\mathfrak{A})$, as well as of any graded variety $\mathfrak{W}$ of graded associative algebras in characteristic zero.

Let us finalize this work with the following result, which is a consequence of the observations above.

\begin{proposition}
Let $\mathbb{F}$ be an algebraically closed field with $\mathsf{char}(\mathbb{F})=0$, and $\mathsf{G}$ a finite group. Let $\mathfrak{W}$ be the $\mathsf{G}$-graded variety generated by subset $S=\left\{\left[y^{(e)}, z^{(g)}\right]:g\in\mathsf{G}\right\}$ of $\mathbb{F}\langle X^\mathsf{G}\rangle$. If $\mathsf{G}=\mathbb{Z}_p$, with $\mathsf{gcd}(p,2)=1$, then any $\mathsf{G}$-graded algebra of $\mathfrak{W}$ satisfies the (ordinary) polynomial identity $[x_1,x_2,x_3][x_4,x_5,x_6]\cdots[x_{n-2},x_{n-1},x_n]\in \mathbb{F}\langle X\rangle$ for some $n\in\mathbb{N}$.
\end{proposition}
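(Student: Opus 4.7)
The plan is to invoke Theorem~\ref{teoIrina02} to reduce the problem to showing the ordinary polynomial identity $[x_1,x_2,x_3]\cdots[x_{n-2},x_{n-1},x_n]\equiv 0$ for the Grassmann envelope $\mathsf{E}^\mathsf{G}(\mathfrak{A})$ of some finite dimensional $(\mathsf{G}\times\mathbb{Z}_2)$-graded $\mathbb{F}$-algebra $\mathfrak{A}$. Writing $\mathfrak{A}=\mathfrak{B}\oplus\mathsf{J}$ via Lemma~\ref{teoIrina03} with $\mathfrak{B}=\prod_{i=1}^{s}M_{k_i}(\mathbb{F}^{\sigma_i}[\mathsf{H}_i])$, the argument will split into analyzing the semisimple part $\mathfrak{B}$ and the radical $\mathsf{J}$ separately.

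For the semisimple part, I first observe that $\mathsf{G}\times\mathbb{Z}_2\cong\mathbb{Z}_{2p}$ is cyclic (using $\gcd(p,2)=1$), so each subgroup $\mathsf{H}_i$ is cyclic, the cocycle $\sigma_i$ is cohomologically trivial, and $\mathbb{F}^{\sigma_i}[\mathsf{H}_i]\cong\mathbb{F}[\mathsf{H}_i]$ is commutative. Next I argue that $k_i=1$ for every $i$: since $\mathsf{E}^\mathsf{G}(\mathfrak{B}_i)$ embeds as a $\mathsf{G}$-graded subalgebra of $\mathsf{E}^\mathsf{G}(\mathfrak{A})$ it inherits the identity $[y^{(e)},z^{(g)}]\equiv 0$, forcing its $\mathsf{G}$-neutral component to lie in the center of $\mathsf{E}^\mathsf{G}(\mathfrak{B}_i)$. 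For any $k_i\geq 2$, each diagonal element $E_{jj}\eta_e$ belongs to $(\mathfrak{B}_i)_{(e,0)}$ regardless of the defining tuple $(\theta_{i_1},\ldots,\theta_{i_{k_i}})$, so $E_{jj}\eta_e\otimes u$ is $\mathsf{G}$-neutral in $\mathsf{E}^\mathsf{G}(\mathfrak{B}_i)$ for any $u\in\mathsf{E}_0$; yet for $j\neq l$ the off-diagonal $E_{jl}\eta_e$ is $(\mathsf{G}\times\mathbb{Z}_2)$-homogeneous of some degree $(g,\epsilon)$, and choosing $v\in\mathsf{E}_\epsilon$ one computes $[E_{jj}\eta_e\otimes u, E_{jl}\eta_e\otimes v]=\sigma(e,e)\,E_{jl}\eta_e\otimes uv\neq 0$, contradicting centrality. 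Hence $\mathfrak{B}$ is commutative.

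With $\mathfrak{B}$ commutative, a direct calculation of commutators in $\mathsf{E}^\mathsf{G}(\mathfrak{B})$ gives $[a\otimes u, b\otimes v]=ab\otimes(uv-vu)$, which vanishes unless $u,v\in\mathsf{E}_1$; in the exceptional case $uv-vu=2uv\in\mathsf{E}_0$ and $ab\in\mathfrak{B}_{(\cdot,0)}$. Thus $[\mathsf{E}^\mathsf{G}(\mathfrak{B}),\mathsf{E}^\mathsf{G}(\mathfrak{B})]\subseteq\mathfrak{B}_{(\cdot,0)}\otimes\mathsf{E}_0$, and since $\mathfrak{B}$ is commutative and $\mathsf{E}_0$ is central in $\mathsf{E}$, this subspace sits inside the center of $\mathsf{E}^\mathsf{G}(\mathfrak{B})$. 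Therefore $\mathsf{E}^\mathsf{G}(\mathfrak{B})$ will satisfy $[[x,y],z]\equiv 0$.

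Finally, I combine this with the nilpotency of $\mathsf{E}^\mathsf{G}(\mathsf{J})$ (with index at most $2\,\mathsf{nd}(\mathsf{J})$, as recalled in the discussion preceding the proposition): writing any $x\in\mathsf{E}^\mathsf{G}(\mathfrak{A})$ as $b+j$ with $b\in\mathsf{E}^\mathsf{G}(\mathfrak{B})$ and $j\in\mathsf{E}^\mathsf{G}(\mathsf{J})$ gives $[x_1,x_2,x_3]\equiv[b_1,b_2,b_3]=0\pmod{\mathsf{E}^\mathsf{G}(\mathsf{J})}$, so every triple commutator in $\mathsf{E}^\mathsf{G}(\mathfrak{A})$ lies in the ideal $\mathsf{E}^\mathsf{G}(\mathsf{J})$, and a product of $m\geq\mathsf{nd}(\mathsf{E}^\mathsf{G}(\mathsf{J}))$ such triple commutators vanishes, yielding the identity with $n=3m$. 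The main obstacle is the sub-claim $k_i=1$: one must carefully track how the canonical elementary grading interacts with the $\mathbb{Z}_2$-component of $\mathsf{G}\times\mathbb{Z}_2$ so that a non-vanishing commutator can actually be exhibited in $\mathsf{E}^\mathsf{G}(\mathfrak{B}_i)_e$, and to ensure that $\mathsf{E}_\epsilon$ contains a suitable $v$ with $uv\neq 0$; the remaining pieces are essentially bookkeeping given the decomposition theorems of the preceding sections.
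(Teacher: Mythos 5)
Your proof is correct and follows essentially the same route as the paper's: reduce via Theorem~\ref{teoIrina02} to a Grassmann envelope $\mathsf{E}^\mathsf{G}(\mathfrak{A})$, apply Lemma~\ref{teoIrina03}, force each $k_i=1$ by exhibiting a non-central $\mathsf{G}$-neutral element when $k_i\geq 2$, observe that the twisted group algebras are commutative because $\mathsf{G}\times\mathbb{Z}_2\cong\mathbb{Z}_{2p}$ is cyclic, and finish by pushing the triple commutator into the nilpotent ideal $\mathsf{E}^\mathsf{G}(\mathsf{J})$. Two modest differences are worth noting. You justify commutativity of $\mathbb{F}^{\sigma_i}[\mathsf{H}_i]$ via vanishing of $H^2$ over a cyclic group and an algebraically closed field, whereas the paper cites symmetry of the $2$-cocycle; both are fine and equivalent here. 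More substantively, you bypass the isomorphism (\ref{1.48}) that the paper's proof leans on: in the body of the paper that isomorphism is established only under the additional hypothesis of Corollary~\ref{1.32} (the idempotents $\mathfrak{i}_r$ centralizing $\mathsf{J}$), which the paper's proof of the proposition does not verify. By working directly with the vector-space decomposition $\mathsf{E}^\mathsf{G}(\mathfrak{A})=\mathsf{E}^\mathsf{G}(\mathfrak{B})\oplus\mathsf{E}^\mathsf{G}(\mathsf{J})$, where $\mathsf{E}^\mathsf{G}(\mathsf{J})$ is an ideal, and showing $\mathsf{E}^\mathsf{G}(\mathfrak{B})$ satisfies $[[x,y],z]\equiv 0$, you sidestep this issue entirely, which is a cleaner and slightly more robust argument. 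Finally, the ``main obstacle'' you flag at the end is already resolved within your own sketch: $E_{jj}\eta_e$ lands in degree $(e,0)$ regardless of the tuple defining the elementary grading, and $uv\neq 0$ is available because $\mathsf{E}$ is infinitely generated.
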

\begin{proof}
From the observations above, there exists a finite dimensional $\mathsf{G}\times\mathbb{Z}_2$-graded algebra $\mathfrak{A}$ which satisfies $\mathfrak{W}=\mathsf{var}^\mathsf{G}\left(\mathsf{E}^\mathsf{G}(\mathfrak{A}) \right)$, and hence, we can write $\mathsf{E}^\mathsf{G}(\mathfrak{A})=\mathsf{E}^\mathsf{G}(\mathfrak{A}_1)\times\cdots\times \mathsf{E}^\mathsf{G}(\mathfrak{A}_p)\times\mathsf{E}^\mathsf{G}(\mathsf{J}_{00})$, where each $\mathfrak{A}_s=M_{k_s}(\mathbb{F}^{\sigma_s}[\mathsf{H}_s])\oplus\mathsf{J}_s$ and $\mathsf{J}(\mathfrak{A})=\mathsf{J}_1\times\cdots\times\mathsf{J}_p\times\mathsf{J}_{00}$. Since $\mathsf{E}^\mathsf{G}(\mathfrak{A})_e$ is central in $\mathsf{E}^\mathsf{G}(\mathfrak{A})$, it is easy to see that $k_1=\cdots=k_p=1$, and so $\mathfrak{A}_s=\mathbb{F}^{\sigma_s}[\mathsf{H}_s]\oplus\mathsf{J}_s$ for all $s=1,\dots,p$.

On other the hand, as $\mathsf{G}=\mathbb{Z}_p$, where $p$ and $2$ are coprime, we have that $\mathsf{G}\times\mathbb{Z}_2$ is isomorphic to $\mathbb{Z}_{2p}$. Hence, for any subgroup $\mathsf{H}$ of $\mathsf{G}\times\mathbb{Z}_2$, it follows that any $2$-cocycle $\sigma\in\mathsf{Z}^2(\mathsf{H},\mathbb{F}^*)$ is symmetric, i.e. $\sigma(g,h)=\sigma(h,g)$ for any $g,h\in\mathsf{H}$ (for a proof of this fact, see \cite{MarduaThesis}, Corollary 1.2.8 , p.28). Consequently, the algebras $\mathbb{F}^{\sigma_s}[\mathsf{H_s}]$'s are commutative. Observe that $\mathsf{E}^\mathsf{G}(\mathfrak{A}_r)\mathsf{E}^\mathsf{G}(\mathfrak{A}_s)=\mathsf{E}^\mathsf{G}(\mathfrak{A}_r)\mathsf{E}^\mathsf{G}(\mathsf{J}_{00}) =\mathsf{E}^\mathsf{G}(\mathsf{J}_{00})\mathsf{E}^\mathsf{G}(\mathfrak{A}_r)=\{0\}$ for all $r,s=1,\dots,p$, $r\neq s$. Now, for any $i=0,1$ and $g,h,t\in\mathsf{G}$, we have that
\begin{equation}\nonumber
		\begin{split}
\left[(\mathfrak{A}_s)_{(g,0)}\otimes_\mathbb{F}\mathsf{E}_0,(\mathfrak{A}_s)_{(h,i)}\otimes_\mathbb{F}\mathsf{E}_i\right] \subseteq &
 \left[(\mathbb{F}^{\sigma_s}[\mathsf{H_s}])_{(g,0)},(\mathsf{J}_s)_{(h,i)}\right]\otimes_\mathbb{F}\mathsf{E}_i +
 \left[(\mathsf{J}_s)_{(g,0)},(\mathbb{F}^{\sigma_s}[\mathsf{H_s}])_{(h,i)}\right]\otimes_\mathbb{F}\mathsf{E}_i \\
  & + \left[(\mathsf{J}_s)_{(g,0)},(\mathsf{J}_s)_{(h,i)}\right]\otimes_\mathbb{F}\mathsf{E}_i \\
 \subseteq& (\mathsf{J}_s)_{(gh,i)} \otimes_\mathbb{F} \mathsf{E}_i \subseteq \mathsf{E}^\mathsf{G}(\mathsf{J}_s) \ ,
		\end{split}
\end{equation}
and since $\left[(\mathfrak{A}_s)_{(g,1)}\otimes_\mathbb{F}\mathsf{E}_1,(\mathfrak{A}_s)_{(h,1)}\otimes_\mathbb{F}\mathsf{E}_1\right] \subseteq (\mathfrak{A}_s)_{(gh,0)}\otimes_\mathbb{F}\mathsf{E}_0$, it follows that 
\begin{equation}\nonumber
		\begin{split}
\left[\left[(\mathfrak{A}_s)_{(g,1)}\otimes_\mathbb{F}\mathsf{E}_1,(\mathfrak{A}_s)_{(h,1)}\otimes_\mathbb{F}\mathsf{E}_1\right] , (\mathfrak{A}_s)_{(t,i)}\otimes_\mathbb{F}\mathsf{E}_i\right] \subseteq \left[(\mathfrak{A}_s)_{(gh,0)}\otimes_\mathbb{F}\mathsf{E}_0,(\mathfrak{A}_s)_{(t,i)}\otimes_\mathbb{F}\mathsf{E}_i\right] \subseteq  \mathsf{E}^\mathsf{G}(\mathsf{J}_s)
		\end{split} \ .
\end{equation}
From this, as $\mathsf{E}^\mathsf{G}(\mathfrak{A}_s)=\bigoplus_{g\in\mathsf{G}}\left((\mathfrak{A}_s)_{(g,0)}\otimes_\mathbb{F}\mathsf{E}_0 + (\mathfrak{A}_s)_{(g,1)}\otimes_\mathbb{F} \mathsf{E}_1 \right)$, we conclude that $\left[\mathsf{E}^\mathsf{G}(\mathfrak{A}_s),\mathsf{E}^\mathsf{G}(\mathfrak{A}_s)\right]\subseteq \mathsf{E}^\mathsf{G}(\mathsf{J}_s)$, for all $s=1,\dots,p$. Therefore, since  $\mathsf{E}^\mathsf{G}(\mathsf{J}_{00})$ and $\mathsf{E}^\mathsf{G}(\mathsf{J}_s)$ are subalgebras of $\mathsf{E}^\mathsf{G}(\mathsf{J})$ for all $s=1,\dots,p$, and $\mathsf{E}^\mathsf{G}(\mathsf{J})$ are nilpotent, the result follows.
\end{proof}

%
\section*{ACKNOWLEDGMENTS}
The work was carried out when the first author was a Professor at the University of Bras\'ilia, between 2022 and 2023. 
The authors are thankful to the article's referees for the careful reading and very useful stylistic recommendations.


\bibliographystyle{amsplain}


\end{document}